\title{On the extraordinary construction of cycle sets by Wolfgang Rump}
\author{Pravin Bhandari, Miguel Córdoba Esteve,\\ Jamie Henderson, Scott Warrander}
\DeclareMathOperator{\N}{\mathbb{N}}
\DeclareMathOperator{\Z}{\mathbb{Z}}
\DeclareMathOperator{\R}{\mathbb{R}}
\DeclareMathOperator{\F}{\mathbb{F}}
\DeclareMathOperator{\id}{\text{id}}
\newtheorem{theorem}{Theorem}[subsection]
\newtheorem{lemma}[theorem]{Lemma}
\newtheorem{proposition}[theorem]{Proposition}
\newtheorem{corollary}[theorem]{Corollary}
\theoremstyle{definition} \newtheorem{definition}[theorem]{Definition}
\theoremstyle{definition} \newtheorem{example}[theorem]{Example}
\theoremstyle{definition} 
\numberwithin{equation}{section}
\begin{document}

\maketitle

\begin{abstract}
Cycle sets are algebraic structures introduced by Rump to study set theoretic solutions to the Yang-Baxter equation. While studying cycle sets Rump also introduced braces, which have since overtaken cycle sets as a tool for studying solutions. This survey paper is primarily an introduction to cycle sets, motivating their study and relating them to key results of brace theory and Yang-Baxter theory. It is aimed at anyone from those already very familiar with braces but less familiar with cycle sets, to those with only a basic level of background in ring theory and group theory. We introduce cycle sets following Rump's original results - giving more detailed, easy to follow versions of his proofs - and then relate them back to left braces. We also go on to discuss interesting constructions of cycle sets which do not necessarily correspond directly to braces.
\end{abstract}

\section*{Introduction}
\addcontentsline{toc}{chapter}{Introduction}

The Yang-Baxter equation is central to many areas of mathematical physics, such as statistical mechanics and quantum groups. Solutions take the form of a map $R:V \otimes V \to V \otimes V$, where $V$ is a vector space, however in 1992, Drinfeld \cite{d} proposed the study of solutions obtained by linear extension from a so called set theoretic solution $r:X^2 \to X^2$ on a basis $X$. It is these set theoretic solutions with which we are concerned.

Braces are an algebraic structure generalising Jacobson radical rings, introduced by Rump \cite{ru2} to study set theoretic solutions to the Yang-Baxter equation. Along with their associated solutions, braces are now studied in their own right, with modern brace theorists focussing on left braces. However, Rump originally introduced right braces as an algebraic structure associated to a linear cycle set, a special type of cycle set.

\subsection*{Overview of the Paper}

In chapter 1 we give background on the ring theory needed to motivate braces, and in chapter 2 we introduce the Yang-Baxter equation and give initial definitions and results on (left) braces. Readers already familiar with braces and the Yang Baxter equation may want to skip these sections.

In chapter 3 we present results from \cite{ru1}, where Rump introduces cycle sets, and \cite{ru2}, where he introduces braces. Rump's results are often either stated without proof or proved with minimal detail, so here we give much more detailed proofs to make the results more accessible.

In chapter 4 we use the results from chapter 3 to prove that braces give rise to solutions to the Yang-Baxter equation, and go on to describe how braces and cycle sets allow us to study the structure group of an arbitrary solution.

In chapter 5 we motivate the independent study of cycle sets by using them to find finite solutions of prime power order, which do not arise from braces \cite{cpr}.

\subsection*{Our Contributions}

Although we do not present any significant original results, we have taken advantage of the connections between the theories of braces and cycle set to give some improved formulae and more efficient proofs of known results.

We have also given rigorous treatments to results which had previously only been published with sketches of proofs, or stated as "obvious". While this is understandable in a high level research paper, we have aimed to make the proofs accessible to those with less experience (although perhaps with more time on their hands). This particularly applies to chapter 3.

While brace theory continues to slowly grow in popularity, relatively few people are working on cycle sets. We hope that this may serve as an introduction to the topic, which is accessible to those at the undergraduate level, but also with enough depth to interest those already involved in researching braces and the Yang-Baxter equation.

\section{Ring Theory}

\subsection{Nil and nilpotent rings}

We begin by recalling the definition of a ring:
\begin{definition}
Let $R$ be a set and $+, *$ be binary operations on $R$. We say that $(R,+,*)$ is a \emph{ring} iff:
\begin{enumerate}
\item $(R,+)$ is an abelian group,
\item $*$ is associative, so for $a,b,c \in R$ we have $a*(b*c) = (a*b)*c$,
\item $*$ distributes over $+$, so $a*(b+c) = a*b + a*c$ and $(a+b)*c = a*c + b*c$.
\end{enumerate}

If there exists some $1_R \in R$ such that $1_R a = a 1_R = a$ for all $a \in R$ we say that $R$ is a \emph{ring with identity}, and call $1_R$ the \emph{identity element}.

If for all $a,b \in R$ we have $a*b = b*a$ we say that $R$ is \emph{commutative}.
\end{definition}

When it is clear that we are referring to the ring structure on $R$, we will usually write $R$ rather than $(R,+,*)$, using the latter when there are multiple structures we can define on $R$. Also, whenever it will not give rise to ambiguities, we abbreviate $a*b$ to $ab$.

\begin{definition}
Let $R$ be a ring and $a \in R$. We say that $a$ is \emph{nilpotent} iff there is some $n \in \N$ such that $a^n = \prod_{i=1}^n a = 0$.
\end{definition}

\begin{definition}
We say that a ring $R$ is a \emph{nil ring} iff every $r \in R$ is nilpotent.
\end{definition}

\begin{definition}
We say that a ring $R$ is a \emph{nilpotent ring} if there exists some $n \in \N$ such that for every $(a_1, \cdots, a_n) \in R^n$ we have $a_1 \cdots a_n = 0$.
\end{definition}

\begin{example}
Let $R$ be a ring of strictly upper triangular $3 \times 3$ matrices over a field $\F$:

\[ R = \left\{ \begin{bmatrix}
0 & a_1 & a_2\\
0 & 0  & a_3\\
0 & 0 & 0
\end{bmatrix} : a_i \in \F \right\}. \]
Then $R$ is nilpotent and also nil, since every product of 3 elements is 0:
\begin{align*}
&\begin{pmatrix}
0 & a_1 & a_2\\
0 & 0  & a_3\\
0 & 0 & 0
\end{pmatrix}\cdot
\begin{pmatrix}
0 & b_1 & b_2\\
0 & 0  & b_3\\
0 & 0 & 0
\end{pmatrix}\cdot
\begin{pmatrix}
0 & c_1 & c_2\\
0 & 0  & c_3\\
0 & 0 & 0
\end{pmatrix}\\
&=
\begin{pmatrix}
0 & a_1 & a_2\\
0 & 0  & a_3\\
0 & 0 & 0
\end{pmatrix}\cdot
\begin{pmatrix}
0 & 0 & b_1 c_3\\
0 & 0  & 0\\
0 & 0 & 0
\end{pmatrix}
=
\begin{pmatrix}
0 & 0 & 0\\
0 & 0 & 0\\
0 & 0 & 0
\end{pmatrix}.
\end{align*}
\end{example}

In the previous example we had a ring that was both nil and nilpotent. In fact we can clearly see that any nilpotent ring is also nil: if $R,n$ satisfies definition 1.4, then $a^n = 0$ for all $a \in R$, so every element of $R$ is nilpotent. It is not the case however that every nil ring is nilpotent

\begin{example}
Let $T$ be the set of infinite matrices with entries from $\N$. Let $S \subset T$ be the set such that $S$ has a finite number non-zero entries. Then let $R \subset S$ be the subset containing all strictly upper triangular matrices. Then $R$ is a ring, and all $a \in R$ we have that $a^n = 0$ for some $n \in \N$. Therefor $R$ is nil, but it is not nilpotent. 
\end{example}

\subsection{Jacobson radial rings}

\begin{definition}
A ring $R$ is a \emph{Jacobson radical ring} iff for every $a \in R$ there exists some $b \in R$ such that $a + b + ab = 0$.
\end{definition}

We notice that if $R$ is a unital ring, then $-1 \in R$ and $-1 + b + (-1)b = -1 + b - b = -1$, so no Jacobson radical ring is unital. However, when working with a Jacobson radical ring it is often convenient to embed it into a unital ring.

\begin{proposition}
Let $R$ be a ring. Then there exists a unital ring $R^1$ with a natural embedding $R \hookrightarrow R^1$, so that $R$ is Jacobson radical if and only if for all $a \in R$ the element $1 + a \in R^1$ has an inverse of the form $1 + b$ for some $b \in R$.
\end{proposition}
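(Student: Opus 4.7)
The plan is to construct $R^1$ explicitly via the Dorroh extension: take $R^1 := R \times \Z$ as an abelian group, equipped with the multiplication $(r,n)(s,m) := (rs + ns + mr, nm)$. A routine check shows that $R^1$ is a ring with identity $1 := (0,1)$, and that the map $r \mapsto (r,0)$ is an injective ring homomorphism $R \hookrightarrow R^1$. Through this embedding the element $(r,1)$ is naturally identified with $1 + r$, and a direct expansion yields the key identity
\[ (1+a)(1+b) = 1 + (a + b + ab). \]
Writing this identity down is essentially the whole content of the proposition; the remaining work is only to turn its one-sided version into a two-sided one.

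For the forward direction, I would assume $R$ is Jacobson radical, fix $a \in R$, and pick $b \in R$ with $a + b + ab = 0$. By the key identity, $(1+a)(1+b) = 1$, so $1+b$ is a right inverse of $1+a$ in $R^1$. The subtlety is that the definition of a Jacobson radical ring gives only a one-sided quasi-inverse, while the proposition demands a genuine two-sided inverse in $R^1$. I would resolve this by applying the Jacobson radical hypothesis a second time, now to $b$: there exists $c \in R$ with $b + c + bc = 0$, so $(1+b)(1+c) = 1$. Now $1+a$ is a left inverse of $1+b$ and $1+c$ is a right inverse of $1+b$; associativity in $R^1$ then forces $1+a = (1+a)(1+b)(1+c) = 1+c$, so $a = c$ and $(1+b)(1+a) = 1$ as required.

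The converse is immediate from the same identity: if $1+a$ has an inverse of the form $1+b$ with $b \in R$, then $(1+a)(1+b) = 1$ expands to $a + b + ab = 0$, and since $a \in R$ was arbitrary, $R$ is Jacobson radical. The main obstacle is conceptual rather than technical: it is the observation that one must invoke the radical property a second time in order to promote a right quasi-inverse to a two-sided inverse. Everything else is formal manipulation of the Dorroh construction.
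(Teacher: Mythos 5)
Your proof is correct and follows essentially the same route as the paper: both construct the Dorroh extension $\Z \times R$ (you write $R \times \Z$, but the multiplication is identical up to swapping the coordinates) and reduce everything to the identity $(1+a)(1+b) = 1 + (a+b+ab)$. You are in fact slightly more careful than the paper, which deduces $(1+a)^{-1} = (1+b)$ from the one-sided computation alone; your associativity argument promoting the right inverse to a two-sided one is precisely the argument the paper defers to its later lemma on uniqueness of adjoint inverses.
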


\begin{proof}
Let $R^1 = \Z \times R$, with addition defined element wise and multiplication defined by:
\[ (n,a)*(m,b) = (nm, ma + nb + ab) \]
for $n,m \in \Z$, $a,b \in R$ (where $na = \sum_{i=1}^n a$). The map $a \mapsto (0,a)$ embeds $R$ in $R^1$ since
\[(0,a)*(0,b) = (0, 0a + 0b + ab) = (0, ab),\]
and $(1,0)$ is the identity in $R^1$ since 
\[(1,0)*(n,a) = (1n, 1a + n0 + a0) = (n,a),\]
\[(n,a)*(1,0) = (n1, n0 + 1a + 0a) = (n,a).\]

Now, let $a \in R$. If there exists $b \in R$ such that $a + b +ab =0$, then
\[ (1+a)*(1+b) = (1,a)*(1,b) = (1, a + b + ab) = (1,0), \]
so $(1+a)^{-1} = (1+b)$.

Conversely, suppose $(1+a)*(1+b) = 1$. Then:
\[ (1+a)*(1+b) = 1 + a + b + ab = 1 \Rightarrow a+b+ab = 0. \]
\end{proof}

We see that Jacobson radical rings generalise nil and nilpotent rings.

\begin{proposition}
If $R$ is a nil or nilpotent ring, then $R$ is Jacobson radical.
\end{proposition}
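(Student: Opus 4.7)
The plan is to reduce to the nil case and then use the finite geometric series trick. Since any nilpotent ring is also nil (as observed in the paragraph preceding Example 1.6), it suffices to prove the proposition under the hypothesis that $R$ is nil.

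Given $a \in R$, nilness provides some $n \in \N$ with $a^n = 0$. I will then produce the required $b \in R$ explicitly by setting
\[ b = \sum_{i=1}^{n-1} (-1)^i a^i = -a + a^2 - a^3 + \cdots + (-1)^{n-1}a^{n-1}, \]
which lies in $R$ because $R$ is closed under its own operations. The verification that $a + b + ab = 0$ is then a direct telescoping computation: expanding $ab = \sum_{i=1}^{n-1}(-1)^i a^{i+1}$ and using $a^n = 0$ to discard the top term, every power $a^i$ for $i \geq 2$ appears with coefficient $(-1)^i + (-1)^{i-1} = 0$, while the $a^1$ contributions from $a$ and $b$ cancel.

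Alternatively (and perhaps more cleanly to present), I would invoke Proposition 1.8 and work inside $R^1$, where the identity
\[ (1+a)\Bigl(1 + \sum_{i=1}^{n-1}(-1)^i a^i\Bigr) = 1 - (-a)^n = 1 \]
is the familiar finite geometric series, so that $1+a$ has an inverse of the form $1+b$ with $b \in R$. By Proposition 1.8, this is equivalent to $R$ being Jacobson radical.

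There is no serious obstacle here: the only subtlety is making sure the candidate $b$ actually lies in $R$ rather than merely in $R^1$, which is automatic since $b$ is a finite sum of products of elements of $R$. The proof is essentially the standard observation that nilpotent elements are quasi-invertible, packaged through the embedding $R \hookrightarrow R^1$ provided by the preceding proposition.
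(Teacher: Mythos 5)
Your proposal is correct and follows essentially the same route as the paper: reduce to the nil case, then exhibit the adjoint inverse $b = \sum_{i\geq 1}(-1)^i a^i$ (a finite sum by nilpotency) and verify $a+b+ab=0$ by telescoping. The only cosmetic difference is that the paper writes the sum with an infinite upper limit that terminates, while you truncate it explicitly at $n-1$ and also note the equivalent geometric-series verification inside $R^1$.
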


\begin{proof}
Since every nilpotent ring is nil, it suffices to show that if $R$ is nil then $R$ is Jacobson radical.

We claim that for $a \in R$, $b = \sum_{n=1}^\infty (-1)^n a^n$ satisfies $a + b + ab = 0$. $b$ exists since $a$ is nilpotent so for some $N$ we have $a^n = 0$ whenever $n \geq N$, and indeed:
\begin{align*}
a + b + ab &= a + \sum_{n=1}^\infty (-1)^n a^n + a\sum_{n=1}^\infty (-1)^n a^n\\
&= \sum_{n=2}^\infty (-1)^n a^n + \sum_{n=1}^\infty (-1)^n a^{n+1}\\
&= \sum_{n=2}^\infty (-1)^n a^n + \sum_{n=2}^\infty (-1)^{n-1} a^n\\
&= \sum_{n=2}^\infty ((-1)^n + (-1)^{n-1})a^n\\
&= 0.
\end{align*}
\end{proof}

However, as suggested when we said that Jacobson radical rings \emph{generalise} nil and nilpotent rings, there exist Jacobson radical rings which are not nil.

\begin{example}
Let $\R[[x]]$ be the ring of formal power series over the real numbers, and let $S$ be a subring of $\R[[x]]$ consisting of power series with zero constant term. Every power series $f(x) \in \R[[x]]$ with non-zero constant term is invertible in $\R[[x]]$, so $1 + f(x)$ is invertible for every $f(x) \in S$. Thus by proposition 1.2.2 $S$ is Jacobson radical. However S is clearly not nil, as $f(x) = x \in S$ but $(f(x))^n = x^n \neq 0$ for all $n \in \N$. Since every nilpotent ring is nil, $S$ is also not nilpotent.
\end{example}

Motivated by the definition of a Jacobson radical ring, we define an additional binary operation on a ring:

\begin{definition}
Let $R$ be a ring. We define the \emph{adjoint multiplication} $\circ$ on $R$ by:
\[ a \circ b = a + b + ab \quad \text{for } a,b \in R.\]
\end{definition} 

So $R$ is a Jacobson radical ring if and only if for every $a \in R$ there is a $b \in R$ such that $a \circ b = 0$. Since $a \circ 0 = a + 0 + a0 = a$, and $0 \circ a = 0 + a + 0a = a$, we can think of 0 as the \emph{adjoint identity}, and so a ring is Jacobson radical if and only if element has an \emph{adjoint inverse}. A nice property for inverse elements to have is uniqueness, which turns out to hold for the adjoint in a Jacobson radical ring:

\begin{lemma}
Let $R$ be a Jacobson radical ring and $a \in R$. Then there exists a \textbf{unique} $b \in R$ such that $a \circ b = 0$ and $b \circ a = 0$.
\end{lemma}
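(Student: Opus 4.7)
The plan is to recognize that $(R, \circ)$ is a monoid with identity $0$ in which every element is assumed to have a left adjoint inverse, and then to deploy the standard monoid argument that such left inverses are automatically two-sided and unique.

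First I would verify the two ingredients needed: that $0$ is a two-sided identity for $\circ$ (already noted just before the lemma statement: $a \circ 0 = 0 \circ a = a$) and that $\circ$ is associative. Associativity is a direct computation from the distributive and associative laws of the ring, since
\[ (a \circ b) \circ c = (a+b+ab) + c + (a+b+ab)c = a + b + c + ab + ac + bc + abc, \]
and by symmetry the same expression equals $a \circ (b \circ c)$. This is a routine check that I would present in one line.

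Next, for existence of a two-sided adjoint inverse, let $a \in R$. By Definition 1.2.4 there exists $b \in R$ with $a \circ b = 0$. Applying the same definition to $b$, there is $c \in R$ with $b \circ c = 0$. Then associativity gives
\[ c = 0 \circ c = (a \circ b) \circ c = a \circ (b \circ c) = a \circ 0 = a, \]
so $b \circ a = b \circ c = 0$. Thus the same $b$ satisfies both $a \circ b = 0$ and $b \circ a = 0$.

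For uniqueness, suppose $b$ and $b'$ both satisfy $a \circ b = a \circ b' = 0$. By the previous paragraph $b \circ a = 0$, so
\[ b' = 0 \circ b' = (b \circ a) \circ b' = b \circ (a \circ b') = b \circ 0 = b. \]
The proof is essentially free once associativity is in hand; the only mild obstacle is that the definition of a Jacobson radical ring only asserts a \emph{left} adjoint inverse, so the trick of applying the defining property a second time (to $b$ rather than $a$) is what makes the two-sided inverse appear. Once that step is identified, the rest is the standard monoid cancellation argument.
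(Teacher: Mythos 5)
Your proof is correct, and the underlying idea is the same cancellation argument the paper uses: apply the defining property twice to get a left inverse of $b$, then use associativity to collapse $(a\circ b)\circ c = a\circ(b\circ c)$ and conclude the inverse is two-sided and unique. The one genuine difference is in where associativity comes from. The paper routes the argument through the unital embedding $R\hookrightarrow R^1$ of Proposition 1.2.2, writing $1+c = [(1+a)(1+b)](1+c) = (1+a)[(1+b)(1+c)] = 1+a$, so that associativity is inherited from the ring multiplication of $R^1$; you instead verify associativity of $\circ$ directly from the ring axioms of $R$ and work intrinsically in the monoid $(R,\circ)$. Your route is slightly more self-contained (it does not depend on Proposition 1.2.2 at all), and it front-loads the associativity computation that the paper only carries out later in the proof of Theorem 1.2.7 --- so if your proof were adopted, that part of Theorem 1.2.7 would already be done. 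The paper's version buys a shorter lemma proof at the cost of invoking the unitalization. Either way the logic is sound; just make sure your one-line associativity check is actually displayed, since the lemma as stated in the paper precedes any proof of that fact.
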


\begin{proof}
We know that since $R$ is Jacobson radical there exists $b$ such that $a \circ b = 0$. Suppose that $b \circ c = 0$, so embedding $R$ into a unital ring $R^1$ as in proposition 1.2.2, we have:
\[ 1+c = [(1+a)(1+b)](1+c) = (1+a)[(1+b)(1+c)] = 1+a, \]
so $a = c$ meaning $b \circ a = a \circ b = 0$.

Suppose that $a \circ b = 0$ and $a \circ b' = 0$. Embedding $R$ into a unital ring $R^1$, we have:
\[ 1 + b = (1+b)[(1+a)(1+b')] = [(1+b)(1+a)](1+b') = 1+b',\]
so $b = b'$ giving uniqueness.
\end{proof}

\begin{theorem}
A ring $R$ is Jacobson radical if and only if $(R,\circ)$ is a group. We call $R^\circ = (R,\circ)$ the \emph{adjoint group} of $R$.
\end{theorem}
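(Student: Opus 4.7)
The plan is to verify the group axioms for $(R,\circ)$ directly, noting which axiom is equivalent to $R$ being Jacobson radical and which are automatic. Closure is immediate since $a\circ b = a+b+ab$ lies in $R$, and the remark preceding Lemma 1.2.6 already identifies $0$ as a two-sided $\circ$-identity. So the real work splits into two parts: checking associativity of $\circ$ unconditionally, and then pinning down exactly which axiom the Jacobson radical condition is supplying.

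First I would verify associativity by a direct expansion: computing $(a\circ b)\circ c$ and $a\circ(b\circ c)$ using the ring axioms (distributivity of $*$ over $+$, associativity of $*$, and commutativity of $+$) both yield $a+b+c+ab+ac+bc+abc$, so they agree for all $a,b,c\in R$. This step requires no hypothesis on $R$ beyond being a ring, so $(R,\circ)$ is always an associative monoid with identity $0$.

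For the forward implication, assume $R$ is Jacobson radical. By Lemma 1.2.6 every $a\in R$ admits a unique $b\in R$ with $a\circ b = b\circ a = 0$, i.e.\ a two-sided $\circ$-inverse. Together with associativity and the identity $0$, this gives all the group axioms, so $(R,\circ)$ is a group. For the reverse implication, assume $(R,\circ)$ is a group. Then for each $a\in R$ there exists $b\in R$ with $a\circ b = 0$, which unpacks to $a+b+ab = 0$; this is precisely Definition 1.2.1, so $R$ is Jacobson radical.

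The only calculation of any substance is the associativity check, and even that is routine expansion; the main conceptual point is simply matching the Jacobson radical axiom against the inverse axiom of a group, with Lemma 1.2.6 ensuring that the one-sided inverses guaranteed by Definition 1.2.1 are automatically two-sided. I do not anticipate any real obstacle.
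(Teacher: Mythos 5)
Your proposal is correct and follows essentially the same route as the paper: identify $0$ as the $\circ$-identity from the preceding discussion, invoke Lemma 1.2.6 for two-sided inverses in the Jacobson radical case, and verify associativity by the same direct expansion to $a+b+c+ab+ac+bc+abc$. You are slightly more explicit than the paper in spelling out the converse direction (that the group inverse axiom unpacks to the Jacobson radical condition), which the paper leaves implicit in its earlier discussion, but this is the same argument.
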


\begin{proof}
In our discussion above we saw that 0 is the identity in $(R,\circ)$, and by lemma 1.2.6, if $R$ is Jacobson radical then every $a \in R$ has a unique inverse in $(R, \circ)$. It remains to show that $\circ$ is associative:
\begin{align*}
a \circ (b \circ c) &= a + (b + c + bc) + a(b+c+bc)\\
&= a + b + c + bc + ab + ac + abc\\
&= (a + b + ab) + c + (a + b + ab)c\\
&= (a\circ b)\circ c.
\end{align*} 
\end{proof}

\section{Braces and the Yang-Baxter Equation}

\subsection{The Yang-Baxter equation}

The quantum Yang-Baxter equation (QYBE) first appeared in independent studies of 2D integrable systems by McGuire, in 1964, and Yang, in 1967. Since then the equation has garnered immense interest, and researchers have revealed its connection with many areas of mathematics and physics (knot theory, $C^*$ algebras, statistical mechanics, 2D conformal field theory, quantum computing etc.).

A 2D quantum integrable system is a 2D multi-particle scattering system where particles interact with each other. One useful property of these systems is that they can always be decomposed into models consisting of only 3 particles. The QYBE imposes the following condition on these 3-particle scattering models: the order in which the 2-particle scatterings are performed inside the 3-particle scattering model is inconsequential. Thus, for certain 3-particle systems in which the initial and final state are equal, it does not matter in what order the particles interact pair-wisely with each other to get to the final state.

One way to show this equivalence imposed by the QYBE is that if you have an initial state $(a,b,c)$ and a final state $(c,b,a)$ on 3 particles, then, by only performing permutations of adjacent letters, each possible way of going from one to the other is equivalent:
\begin{align}
    (a,b,c) \rightarrow (a,c,b) \rightarrow (c,a,b) \rightarrow (c,b,a)\\
    (a,b,c) \rightarrow (b,a,c) \rightarrow (b,c,a) \rightarrow (c,b,a)
\end{align}
hence 2.1 and 2.2 are equivalent. These permutations can be expressed as a composition of some matrices $r_{ij}$, which are generated from $r$, the R-matrix corresponding to the integrable system. This R-matrix has dimension $n^2$, where $n$ is the number of degrees of freedom of each particle inside the model. (for further info on this derivation see \cite{y}, \cite{sb})
\begin{definition}
Let $V$ be a vector space and $R:V \otimes V \to V \otimes V$ be a linear map. We say that $(V,R)$ is a solution to the \emph{Yang-Baxter equation} iff:
\begin{equation}
(R \otimes \text{id})(\text{id} \otimes R)(R \otimes \text{id}) = (\text{id} \otimes R)(R \otimes \text{id})(\text{id} \otimes R).
\end{equation}
\end{definition}

For many years, most known solutions were deformations of the identity solution, and their related algebraic structures (Hopf Algebras) have been studied in depth. In 1992 however, Drinfeld proposed in \cite{d} that mathematicians should focus their efforts on studying a new class of solutions, obtained as follows:

\begin{definition}
Let $X$ be a set and $r:X^2 \to X^2$. We say that $(X,r)$ is a \emph{set theoretic solution} to the Yang-Baxter equation iff:
\begin{equation}
r_1 r_2 r_1 = r_2 r_1 r_2,
\end{equation}
where $r_1(x,y,z) = (r(x,y),z)$ and $r_2(x,y,z) = (x, r(y,z))$.
\end{definition}

A set theoretic solution on a set $X$ extends linearly to a solution on a $k$-vector space $V$ where $V = k^{(X)}$ for some field $k$. 

\begin{definition}
Let $(X,r)$ be a set theoretic solution to the Yang-Baxter equation, and write $r(x,y) = (\lambda_x(y), \tau_y(x)) = ({}^x y, x^y)$.
\begin{itemize}
\item We say that $(X,r)$ is \emph{non-degenerate} iff $\lambda_x, \tau_x$ are bijections for all $x \in X$,
\item We say that $(X,r)$ is \emph{involutive} iff $r^2 = \text{id}$.
\end{itemize}
\end{definition}

\noindent
\emph{Remark:} The notation $r(x,y) = (\lambda_x(y),\tau_y(x))$ is a less popular version of $r(x,y) = (\sigma_x(y),\tau_y(x))$, but we will avoid this notation since $\sigma_x$ will mean something else in chapter 3. The $r(x,y) = ({}^x y, x^y)$ notation is common when working with cycle sets and this is the notation we use in chapter 3, elsewhere we use $r(x,y) = (\lambda_x(y),\tau_y(x))$.

\begin{lemma}
A solution to the YBE $(X,r)$ with $r(x,y) = (\lambda_x(y), \tau_y(x))$ is involutive if and only if $\tau_y(x) = \lambda_{\lambda_x(y)}^{-1}(x)$.
\end{lemma}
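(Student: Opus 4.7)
The plan is to compute $r^2$ coordinate-by-coordinate and compare with the identity, then exploit the non-degeneracy implicit in the formula (which uses $\lambda^{-1}$) to go from one coordinate to the other.

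First I would write out
\begin{align*}
r^2(x,y) &= r(\lambda_x(y), \tau_y(x)) \\
&= \bigl(\lambda_{\lambda_x(y)}(\tau_y(x)),\; \tau_{\tau_y(x)}(\lambda_x(y))\bigr).
\end{align*}
So the involutivity condition $r^2 = \text{id}$ is equivalent to the conjunction of
\begin{equation*}
\text{(i) } \lambda_{\lambda_x(y)}(\tau_y(x)) = x \quad\text{and}\quad
\text{(ii) } \tau_{\tau_y(x)}(\lambda_x(y)) = y
\end{equation*}
holding for every $x,y \in X$.

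For the forward implication, (i) is precisely the required formula after applying $\lambda_{\lambda_x(y)}^{-1}$ to both sides (which is legitimate because, as $r$ is a bijection being involutive, the map $\lambda_{\lambda_x(y)}$ must be invertible — this is the only place where I need to be slightly careful). For the reverse implication, (i) is immediate from the assumption, so the work is to deduce (ii). Here I would set $x' = \lambda_x(y)$ and $y' = \tau_y(x)$ and apply the hypothesis again in the form $\tau_{y'}(x') = \lambda_{\lambda_{x'}(y')}^{-1}(x')$. The inner quantity $\lambda_{x'}(y') = \lambda_{\lambda_x(y)}(\tau_y(x))$ simplifies to $x$ by (i), so the expression collapses to $\lambda_x^{-1}(\lambda_x(y)) = y$, which is exactly (ii).

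The calculation is almost entirely mechanical; the only mildly delicate point — and thus the "main obstacle" — is justifying that the reverse direction really closes up without extra assumptions. The point to emphasise is that the very statement of the hypothesis presupposes the invertibility of the relevant $\lambda$-maps, so the argument only needs to manipulate formulas, and the cute cancellation that makes (ii) fall out of (i) is what does the real work. I would close by noting that this lemma will be useful later because it lets us recover the entire map $r$ from the single family $\{\lambda_x\}_{x \in X}$ whenever $(X,r)$ is involutive.
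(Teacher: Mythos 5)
Your proposal is correct and follows essentially the same route as the paper: both directions reduce to the coordinate equations of $r^2 = \mathrm{id}$, the forward implication reads off the first coordinate, and the reverse implication re-applies the hypothesis at $(\lambda_x(y),\tau_y(x))$ so that the second coordinate collapses to $\lambda_x^{-1}(\lambda_x(y)) = y$. Your explicit remark that the statement itself presupposes invertibility of the relevant $\lambda$-maps is a point the paper passes over silently, but it does not change the argument.
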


\begin{proof}
We first see that
\begin{align*}
r(r(x,y)) = (x,y) &\Rightarrow r(\lambda_x(y), \tau_y(x)) = (x,y)\\
&\Rightarrow (\lambda_{\lambda_x(y)}(\tau_y(x)), \tau_{\tau_y(x)}(\lambda_x(y))) = (x,y)\\
&\Rightarrow \tau_y(x) = \lambda_{\lambda_x(y)}^{-1}(x),
\end{align*}
then that if $r(x,y) = (\lambda_x(y),\lambda_{\lambda_x(y)}^{-1}(x))$
\begin{align*}
r(r(x,y)) &= r(\lambda_x(y),\lambda_{\lambda_x(y)}^{-1}(x))\\
&= (\lambda_{\lambda_x(y)}(\lambda_{\lambda_x(y)}^{-1}(x)), \lambda_{\lambda_{\lambda_x(y)}(\lambda_{\lambda_x(y)}^{-1}(x))}^{-1}(\lambda_x(y)))\\
&= (x, \lambda_x^{-1}(\lambda_x(y)))\\
&= (x,y).
\end{align*}
\end{proof}

An important fact about Jacobson radical rings is that they naturally give rise to non-degenerate, involutive, set theoretic solutions to the YBE.

\begin{theorem}
Let $R$ be a Jacobson radical ring. Then $(R,r)$ is a non-degenerate involutive set theoretic solution to the YBE:
\begin{equation}
r(a,b) = (ab + b, ca + a) = (a \circ b - a, c \circ a - c),
\end{equation}
where $(a \circ b - a) \circ c = 0$.
\end{theorem}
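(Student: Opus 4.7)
The statement has four components to verify: $r$ is well-defined, $r$ is involutive, $r$ is non-degenerate, and $r$ satisfies the Yang-Baxter equation. The plan is to work throughout in the unital extension $R^1$ of proposition 1.2.2, exploiting the identity $(1+a)(1+b) = 1 + a \circ b$, which converts the adjoint operation on $R$ into ordinary multiplication in the unit subgroup $1+R \subset R^1$.

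Well-definedness is immediate from lemma 1.2.6: since $R$ is Jacobson radical and $ab+b = a\circ b - a$ lies in $R$, there is a unique $c \in R$ satisfying $(ab+b)\circ c = 0$, so $r$ is defined. The defining condition rewrites as $(1+c)(1+a') = 1$ where $a' = ab+b$, so $1+c = (1+a')^{-1}$ in $R^1$. A short computation shows that the second component satisfies $b' = c\circ a - c = (1+c)a$, whence $(1+a')(1+b') = 1 + a' + (1+a')(1+c)a = 1 + a' + a = (1+a)(1+b)$. Thus $r(a,b) = (a',b')$ is characterised by the compact pair of conditions $a' = ab+b$ together with $(1+a')(1+b') = (1+a)(1+b)$.

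For involutivity, apply $r$ to $(a',b')$. The new first coordinate is $a'\circ b' - a' = a\circ b - (a\circ b - a) = a$, and the new multiplicative product is again $(1+a)(1+b)$, so cancellation in the group $R^\circ$ forces the new second coordinate to be $b$. For non-degeneracy, the map $\lambda_a\colon b \mapsto a\circ b - a$ is a bijection as the composition of left multiplication in $R^\circ$ with additive translation. To show $\tau_b$ is bijective, one solves $\tau_b(a) = y$ directly: the defining equations rearrange in $R^1$ to $a(1 - by) = (1+b)y$, and since $-by \in R$ has an adjoint inverse, $1 - by$ is invertible in $R^1$; this determines $a$ uniquely and a quick check confirms it lies in $R$.

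The Yang-Baxter equation is the main obstacle, and the plan is to extract two algebraic identities which together reduce it to a mechanical check. First, a direct distributive computation yields $\lambda_a\lambda_b = \lambda_{a\circ b}$, so $\lambda$ is a group homomorphism from $R^\circ$ into the bijections of $R$. Second, the multiplicative characterisation above gives $\lambda_a(b)\circ\tau_b(a) = a\circ b$. Combining these, $\lambda_{\lambda_a(b)}\lambda_{\tau_b(a)} = \lambda_{\lambda_a(b)\circ\tau_b(a)} = \lambda_{a\circ b} = \lambda_a\lambda_b$, which is the equality of first coordinates of $r_1r_2r_1$ and $r_2r_1r_2$ on a triple. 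A symmetric argument together with involutivity (via lemma 2.1.4) handles the remaining coordinates and completes the verification.
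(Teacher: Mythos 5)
Your proposal is correct, but it takes a genuinely different route from the paper. The paper does not prove Theorem 2.1.4 directly at all: it declares it a special case of Theorem 2.2.8 (every left brace yields a solution), which is itself only proved in Chapter 4 by passing to the opposite right brace, invoking the equivalence with linear cycle sets (Theorem 3.4.4), the correspondence between non-degenerate cycle sets and unitary QYBE solutions (Theorem 3.2.2 and Corollary 3.2.7), and the QYBE-to-YBE translation of Lemma 3.2.9. You instead verify everything directly inside the unital hull $R^1$ of Proposition 1.2.2, and your computations check out: the characterisation of $r(a,b)=(a',b')$ by $a'=ab+b$ together with $(1+a')(1+b')=(1+a)(1+b)$ is exactly right, it gives involutivity by cancellation in $R^\circ$, the rearrangement $a(1-by)=(1+b)y$ correctly inverts $\tau_b$ (with $a=(1+b)y(1-by)^{-1}\in R$ because $R$ is an ideal of $R^1$), and the identity $\lambda_{\lambda_a(b)}\lambda_{\tau_b(a)}=\lambda_{a\circ b}=\lambda_a\lambda_b$ is precisely the first-coordinate condition of the braid relation. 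What your approach buys is a short, self-contained argument that needs none of the cycle-set machinery; what it gives up is generality, since your manipulations (e.g.\ $a-aby=a(1-by)$ and $(1+b)y$ collapsing via two-sided distributivity) use genuine ring axioms and so do not extend to arbitrary left braces, whereas the paper's detour proves the stronger Theorem 2.2.8 in one stroke. The only soft spot is your final sentence: ``a symmetric argument handles the remaining coordinates'' is not quite a proof on its own, because the analogous homomorphism property for $\tau$ is not symmetric on the nose. The clean way to close this is to cite the paper's Corollary 3.2.3 / Lemma 3.2.5, which says that for an involutive, non-degenerate map one of the three coordinate conditions already implies the other two; you have established involutivity, non-degeneracy, and the $\lambda$-condition, so that lemma finishes the job, and there is no circularity in using it since it is proved independently of Theorem 2.1.4.
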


\begin{proof}
This is a special case of theorem 2.2.8, which is proved in chapter 4.
\end{proof}

So every Jacobson radical ring gives rise to a non-degenerate involutive solution. It is in fact possible to further generalise Jacobson radical rings to obtain even more solutions, and this is will be the initial motivation of braces.

\subsection{Braces}

\begin{definition}
Let $B$ be a set with binary operations $+,\circ$ such that $(B,+)$ is an abelian group and $(B,\circ)$ is a group:
\begin{itemize}
\item $(B,+,\circ)$ is a \emph{left brace} iff \begin{equation}
a \circ(b+c) + a = a \circ b + a \circ c,
\end{equation}
\item $(B,+,\circ)$ is a \emph{right brace} iff \begin{equation}
(a+b)\circ c + c = a\circ c + b\circ c,
\end{equation}
\item $(B,+,\circ)$ is a \emph{two sided brace} iff $(B,+,\circ)$ is both a left brace and a right brace.
\end{itemize}
\end{definition}

\begin{example}
Any abelian group $(G,+)$ trivially gives a two sided brace $(G,+,+)$, since:
\[ a + (b + c) + a = (a + b) + (a + c) = (b+a) + (c+a) = (b + c) + a + a. \]

In fact, whenever we have a left or right brace $B$, and $(B,\circ)$ is abelian, then $B$ is a two sided brace.
\end{example}

We can go back and forth between left and right braces in the following sense:

\begin{lemma}
If $(B,+,\circ)$ is a left brace, then $(B,+,\circ^{op})$ is a right brace, where $\circ^{op}$ is the opposite multiplication $a \circ^{op} b = b \circ a$. We have that $op: (B,+,\circ) \mapsto (B,+,\circ^{op})$ is a bijection between left and right braces.
\end{lemma}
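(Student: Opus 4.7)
The plan is to verify the three things packaged into this statement in turn: that $(B,+,\circ^{op})$ really is a right brace, that the same recipe converts right braces into left braces, and that these two assignments are mutually inverse.

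First I would check the group-theoretic prerequisites: $(B,+)$ is unchanged and still abelian, while $(B,\circ^{op})$ is the opposite group of $(B,\circ)$, which is always a group (with the same identity, and with $\circ^{op}$-inverse equal to the $\circ$-inverse). So the only real content is the distributivity axiom.

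The key calculation is to start from the right brace identity we want,
\[(a+b)\circ^{op} c + c = a \circ^{op} c + b \circ^{op} c,\]
unpack the definition of $\circ^{op}$ to rewrite the left-hand side as $c \circ (a+b) + c$ and the right-hand side as $c\circ a + c\circ b$, and observe that this is precisely the left brace axiom (2.2.1) applied to the element $c$ with $a,b$ in the roles of $b,c$. So the right brace axiom for $(B,+,\circ^{op})$ holds if and only if the left brace axiom for $(B,+,\circ)$ does. I don't expect any real obstacle here — the axioms are deliberately mirror images and the whole point of the $op$ construction is that it swaps the two sides.

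For the bijection claim, I would observe that exactly the same calculation, read in the other direction, shows that if $(B,+,\bullet)$ is a right brace then $(B,+,\bullet^{op})$ is a left brace, so the assignment $op$ maps right braces back to left braces on the same underlying additive group. Finally, since $\circ^{op\,op} = \circ$, the map $op$ is an involution, hence a bijection between the set of left braces and the set of right braces with underlying abelian group $(B,+)$.
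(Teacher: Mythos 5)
Your proposal is correct and follows essentially the same route as the paper: the distributivity axioms are mirror images under $\circ^{op}$, and $(\circ^{op})^{op}=\circ$ makes $op$ an involution, hence a bijection. Your explicit remarks that $(B,+)$ is untouched and that the opposite of a group is a group are details the paper leaves implicit, but they do not change the argument.
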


\begin{proof}
The first result follows from:
\[ a \circ (b + c) + a = a \circ b + a \circ c \iff (b+c) \circ^{op} a + a = b \circ^{op} a + c \circ^{op} a. \]

Since $(\circ^{op})^{op} = \circ$, it follows that $op$ is a bijection.
\end{proof}

We can define another operation on a brace, which will be useful for proving arithmetic facts about braces, and makes clear the connection to Jacobson radical rings.

\begin{definition}
Let $(B,+,\circ)$ be a left brace, and define the \emph{ring multiplication} $*$ on $B$ by:
\begin{equation}
 a*b = a \circ b - a - b
\end{equation}
for $a,b \in B$.
\end{definition}

\begin{proposition}
Every Jacobson radical ring $(R,+,*)$ is a two sided brace $(R,+,\circ)$ where $\circ$ is the adjoint multiplication.

Similarly, every two sided brace $(B,+,\circ)$ is a Jacobson radical ring $(B,+,*)$ where $*$ is the ring multiplication.
\end{proposition}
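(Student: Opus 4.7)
My plan is to verify the relevant axioms in each direction.

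For the forward direction, suppose $(R,+,*)$ is a Jacobson radical ring and let $\circ$ denote the adjoint multiplication. Theorem 1.2.7 already gives that $(R,\circ)$ is a group, and $(R,+)$ is abelian by the ring axioms, so all that remains is to check the two brace identities. Expanding via $a \circ b = a+b+a*b$ and the ring-distributivity of $*$, both $a \circ (b+c) + a$ and $a \circ b + a \circ c$ collapse to $2a + b + c + a*b + a*c$, verifying the left brace identity; the right brace identity is entirely symmetric.

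For the reverse direction, suppose $(B,+,\circ)$ is a two-sided brace and define $a*b = a \circ b - a - b$. Since $(B,+)$ is already abelian, I need to verify that $*$ is associative, distributive over $+$, and gives rise to a Jacobson radical structure. Distributivity is immediate: rewriting the brace identities as $a \circ (b+c) = a \circ b + a \circ c - a$ and $(a+b) \circ c = a \circ c + b \circ c - c$ and substituting into the definition of $*$ collapses $a*(b+c)$ to $a*b + a*c$ (and similarly on the right) in a line. Jacobson radicality is also immediate: for $a \in B$, let $b$ be the $\circ$-inverse of $a$, so $a \circ b = 0$, i.e.\ $a + b + a*b = 0$ (here I also use that $0$ is the $\circ$-identity, which follows by applying either brace axiom to $a + 0$).

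The main obstacle is associativity of $*$. My approach is to show that both $(a*b)*c$ and $a*(b*c)$ equal the symmetric expression
\[ a \circ b \circ c - a \circ b - a \circ c - b \circ c + a + b + c. \]
For this I first extend the brace identities to subtraction: $a \circ (b - c) = a \circ b - a \circ c + a$ and $(a - b)\circ c = a \circ c - b \circ c + c$, both obtained by applying the respective brace axiom to the decompositions $b = (b-c) + c$ and $a = (a-b) + b$ and rearranging. Once these are in hand, expanding $(a*b)*c = (a \circ b - a - b) \circ c - (a \circ b - a - b) - c$ using right-distributivity of $\circ$ over subtraction, together with the group-associativity of $\circ$, yields the symmetric expression above; the calculation for $a*(b*c)$ is the mirror image using the left identity, and the two sides agree, closing the proof.
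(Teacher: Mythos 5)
Your proof is correct, and in one respect it is more complete than the paper's. The shared core is the equivalence of the two ring-distributivity laws with the two brace identities, which both you and the paper verify by expanding $a \circ b = a + b + a*b$ and cancelling. Where you diverge is on associativity of $*$ in the brace-to-ring direction: the paper's proof asserts that, since the adjoint and ring multiplications are inversely defined and both structures require the circle operation to form a group, only the distributivity equivalences need checking. This tacitly treats associativity of $*$ as automatic, which it is not a priori; the justification only really surfaces later, in Proposition 3.5.1, where associativity of $\circ$ is shown equivalent to $a(b+c+bc) = ab + ac + (ab)c$, which together with left-distributivity gives $a(bc) = (ab)c$. You instead verify associativity directly, showing that both $(a*b)*c$ and $a*(b*c)$ reduce to the symmetric expression
\[
a \circ b \circ c - a \circ b - a \circ c - b \circ c + a + b + c,
\]
using the subtraction forms of the brace identities (which you derive correctly, and which reappear as part of Lemma 2.2.6) together with associativity of $\circ$. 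Your identification of $0$ as the $\circ$-identity is likewise a step the paper defers to Lemma 2.2.6 but which you supply on the spot. The cost of your route is a longer computation; the benefit is a genuinely self-contained proof with no forward reference.
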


\begin{proof}
Since the adjoint multiplication on a ring and the ring multiplication on a brace are inversely defined, and a ring is Jacobson radical if and only if it is a group under the adjoint multiplication, it suffices to prove the following:
\[ a*(b+c) = a*b + a*c \iff a\circ(b+c) + a = a\circ b + a\circ c,\]
\[ (a+b)*c = a*c + b*c \iff (a+b)\circ c + c = a\circ c + b\circ c.\]

For the left distributivity:
\begin{align*}
&a\circ(b+c) + a = a\circ b + a\circ c\\
\iff& a + (b + c) + a*(b+c) + a =  a + b + a*b + a + c + a*c\\
\iff& a*(b+c) = a*b + a*c,
\end{align*}
and for the right:
\begin{align*}
&(a+b)\circ c + c = a\circ c + b\circ c\\
\iff& (a + b) + c + (a+b)*c + c =  a + c + a*c + b + c + b*c\\
\iff& (a+b)*c = a*c + b*c.
\end{align*}
\end{proof}

Now we give some basic facts about braces which will help our calculations:

\begin{lemma}
Let $B$ be a left [right] brace. Then for $a,b,c \in B$ the following hold:
\begin{align}
0 \circ a =& a \circ 0 = a,\\
a \circ (-b) = 2a - a\circ b &\quad [\, (-a) \circ b = 2b - a \circ b \,],\\
a \circ (b - c) - a = a \circ b - a \circ c &\quad [\, (a - b)\circ c - c = a\circ c + b\circ c\, ].
\end{align}
\end{lemma}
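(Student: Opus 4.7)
The plan is to prove all three claims directly from the brace axiom, handling the left brace case in full and pointing out that the right brace case is entirely symmetric. I will use the brace axiom \eqref{2.2.3} (writing \eqref just schematically, i.e.\ $a \circ (b+c) + a = a \circ b + a \circ c$) together with the fact that $(B,\circ)$ is a group so that a two-sided identity exists.

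First I would establish (1). Let $e$ denote the identity of the group $(B,\circ)$. Substituting $a = e$ into the left brace axiom gives $e \circ (b+c) + e = e\circ b + e \circ c$, i.e.\ $(b+c) + e = b + c$, whence $e = 0$. Consequently $0 \circ a = a \circ 0 = a$ for every $a \in B$. This is the key preparatory identity, since both remaining claims will use it implicitly via the cancellation $a \circ 0 = a$.

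Next I would derive (2) by substituting $c = -b$ into the left brace axiom:
\[ a \circ (b + (-b)) + a = a \circ b + a \circ (-b), \]
which by (1) collapses to $a + a = a \circ b + a \circ (-b)$, giving $a \circ (-b) = 2a - a\circ b$. Finally for (3), substitute $c \mapsto -c$ in the axiom and apply (2):
\[ a \circ (b - c) + a = a \circ b + a \circ (-c) = a \circ b + 2a - a \circ c, \]
so $a \circ (b-c) - a = a \circ b - a \circ c$, as required.

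The right brace versions follow by the same routine, now substituting into the right brace axiom $(a+b)\circ c + c = a \circ c + b\circ c$; alternatively, one could invoke Lemma 2.2.3 and transport the left brace identities through $\circ \mapsto \circ^{op}$. No step is genuinely difficult here — the only mild subtlety is recognising that the brace axiom itself forces the additive zero to coincide with the multiplicative identity of $(B,\circ)$, since afterwards everything is pure substitution.
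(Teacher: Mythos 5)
Your proof is correct, and it follows the same overall template as the paper's: reduce the right-brace statements to the left-brace ones via the opposite multiplication, then obtain each identity by substituting into the left brace axiom. The differences are small but worth noting. For the first identity the paper sets $b=c=0$ in the axiom to get $a\circ 0 = a$ and then left-multiplies by $a^{-1}$ in $(B,\circ)$, whereas you set $a=e$ and read off $e=0$ by additive cancellation — both work, and yours avoids invoking inverses. For the second identity the paper routes the computation through the ring multiplication $a*b = a\circ b - a - b$ and the fact that $a*(-b) = -(a*b)$, while you substitute $c=-b$ directly into the axiom and use $a\circ 0 = a$; your version is slightly more elementary and self-contained, since it never needs the $*$ operation or its distributivity. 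The third identity is handled identically in both. So there is no gap; your argument is a clean, marginally more economical variant of the paper's proof.
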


\begin{proof}
The statements on right braces are equivalent to those for left braces except using the opposite multiplication, so by lemma 2.2.2 it suffices to prove the results for left braces.

The first point says that $0 = 1$, where $1$ denotes the identity in $(B,\circ)$. We calculate $a \circ 0 + a = a \circ (0 + 0) + a = a \circ 0 + a \circ 0 + a$, so subtracting $a \circ 0$ gives $a = a\circ 0$. Left multiplying by $a^{-1}$ gives $1 = 0$.

For the second we use the ring multiplication:
\[ a\circ (-b) = a\circ(0 - b) = a*(-b) + a - b = 2a -(a*b + a + b) = 2a - a \circ b. \]

Finally, the third follows from the second:
\[ a \circ (b - c) - a = a \circ (b-c) + a - 2a = a\circ b + a \circ(-c) -2a = a \circ b - a\circ c. \]
\end{proof}

\hfill

Now, motivated by the set theoretic solution associated to a Jacobson radical ring, we define the following:

\begin{definition}
Let $B$ be a left brace. For each $a \in B$ define a map $\lambda_a:B \to B$ called the \emph{lambda map} by 
\begin{equation}
\lambda_a(b) = a \circ b - a.
\end{equation}
\end{definition}

\begin{lemma}
Let $B$ be a brace and define $\lambda:B \to Sym(B)$ by $a \mapsto \lambda_a$. Then $\lambda$ is a homomorphism of the adjoint group $B^{\circ}$, and each $\lambda_a$ is an automorphism of the additive group $(B,+)$.
\end{lemma}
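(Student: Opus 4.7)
The plan is to verify the two assertions in order: first that each $\lambda_a$ is an additive endomorphism of $(B,+)$, then that $\lambda$ respects the adjoint multiplication, and finally to extract bijectivity (hence membership in $\mathrm{Sym}(B)$ and automorphism status) from the homomorphism property.

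For additivity, I would simply unpack the definition and apply the left brace axiom (2.7). Writing $\lambda_a(b+c) = a\circ(b+c) - a$, the axiom rearranges to $a\circ(b+c) = a\circ b + a\circ c - a$, so
\[ \lambda_a(b+c) = a\circ b + a\circ c - 2a = (a\circ b - a) + (a\circ c - a) = \lambda_a(b) + \lambda_a(c). \]
This is a one-line calculation that only uses the defining identity of a left brace.

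Next, for the homomorphism property $\lambda_{a\circ b} = \lambda_a \circ \lambda_b$, the key tool is the subtraction identity (2.13) from Lemma 2.2.5, namely $a\circ(x-y) - a = a\circ x - a\circ y$. Setting $x = b\circ c$ and $y = b$ and computing
\[ \lambda_a(\lambda_b(c)) = a\circ\bigl(b\circ c - b\bigr) - a = a\circ(b\circ c) - a\circ b = (a\circ b)\circ c - (a\circ b) = \lambda_{a\circ b}(c), \]
yields the claim directly. This is the step I expect to be the main (though still modest) obstacle, because it requires recognising that (2.13) is exactly the identity needed to collapse the nested $\circ$-subtraction into the form of $\lambda_{a\circ b}$; without that lemma one would have to redo the manipulation from scratch using the brace axiom.

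Finally, bijectivity of each $\lambda_a$ follows formally. From (2.11) we have $\lambda_0(b) = 0\circ b - 0 = b$, so $\lambda_0 = \mathrm{id}_B$. Combining this with the multiplicative identity just established, $\lambda_a \circ \lambda_{a^{-1}} = \lambda_{a\circ a^{-1}} = \lambda_0 = \mathrm{id}_B$ and symmetrically on the other side, so $\lambda_a$ has a two-sided inverse in the monoid of self-maps of $B$ and therefore lies in $\mathrm{Sym}(B)$. This simultaneously shows that $\lambda$ is well-defined as a map into $\mathrm{Sym}(B)$, that it is a group homomorphism $B^\circ \to \mathrm{Sym}(B)$, and that each $\lambda_a$, being an additive homomorphism with inverse $\lambda_{a^{-1}}$ which is itself additive, is an automorphism of $(B,+)$.
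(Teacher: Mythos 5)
Your proof is correct and follows essentially the same route as the paper's: additivity via the left brace axiom, and the homomorphism property via the subtraction identity $a\circ(b-c)-a = a\circ b - a\circ c$ applied to $\lambda_b(c) = b\circ c - b$. The only difference is that you explicitly verify bijectivity of each $\lambda_a$ via $\lambda_a\lambda_{a^{-1}} = \lambda_0 = \mathrm{id}$, a point the paper's proof leaves implicit, so your version is if anything slightly more complete.
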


\begin{proof}
We need to show that $\lambda_{a \circ b} = \lambda_a \lambda_b$. Using 2.11 we have
\begin{align*}
\lambda_a(\lambda_{b}(c)) &= \lambda_a(b \circ c - b)\\
&= a \circ (b \circ c - b) - a\\
&= a \circ b \circ c - a \circ b + a - a\\
&= (a \circ b) \circ c - (a \circ b)\\
&= \lambda_{a \circ b}(c).
\end{align*}

Next we need to show that $\lambda_a(b + c) = \lambda_a(b) + \lambda_a(c)$:
\begin{align*}
\lambda_a(b + c) &= a \circ (b + c) - a\\
&= a \circ b + a \circ c - 2a\\
&= (a \circ b - a) + (a \circ c - a)\\
&= \lambda_a(b) + \lambda_a(c).
\end{align*}
\end{proof}

Now we can state one of the most important theorems on Braces, which we prove over the next two sections.

\begin{theorem}
Let $B$ be a left brace, and let $r:B^2 \to B^2$ be defined by:
\begin{equation}
r(a,b) = \left(\lambda_a(b), \lambda_{\lambda_{a}(b)}^{-1}(a)\right).
\end{equation}
Then $(B,r)$ is a non-degenerate involutive set theoretic solution to the YBE.
\end{theorem}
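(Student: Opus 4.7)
The plan is to verify the three requirements—involutivity, non-degeneracy, and the braid equation—in that order, exploiting the fact that the definition of $r$ already encodes the involutive relation from Lemma 2.1.5.

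Involutivity is immediate: the formula $r(a,b) = (\lambda_a(b), \lambda_{\lambda_a(b)}^{-1}(a))$ is precisely the form characterized in Lemma 2.1.5, so $r^2 = \id$ with no further work. For one half of non-degeneracy, each $\lambda_a$ is an automorphism of $(B,+)$ by Lemma 2.2.7, hence bijective. The bulk of the non-degeneracy effort lies in showing that $\tau_b(a) := \lambda_{\lambda_a(b)}^{-1}(a)$ is a bijection in $a$ for fixed $b$. The key observation I would record first is the identity
\[
\lambda_a(b) \circ \tau_b(a) = a \circ b,
\]
obtained by setting $u = \lambda_a(b)$ and $v = \tau_b(a)$ in $u \circ v = u + \lambda_u(v)$ and noting that $\lambda_{\lambda_a(b)}(\tau_b(a)) = a$ by construction, so $u \circ v = \lambda_a(b) + a = a \circ b$. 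Rearranging gives $\tau_b(a) = \lambda_a(b)^{-1} \circ (a \circ b)$ in the adjoint group $B^\circ$, reducing the inversion of $\tau_b$ to a fixed point equation in the brace.

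For the braid equation, the crucial structural input is the homomorphism property $\lambda_{a \circ b} = \lambda_a \lambda_b$ from Lemma 2.2.7. Combined with the identity above, matching the first coordinates of $r_1 r_2 r_1(a,b,c)$ and $r_2 r_1 r_2(a,b,c)$ collapses cleanly to
\[
\lambda_{\lambda_a(b)} \lambda_{\tau_b(a)} = \lambda_{\lambda_a(b) \circ \tau_b(a)} = \lambda_{a \circ b} = \lambda_a \lambda_b,
\]
both sides becoming $\lambda_a(\lambda_b(c))$ when evaluated at $c$. Similar but more tangled manipulations, together with involutivity, should pin down the remaining two coordinates.

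The hardest part will be this last coordinate chase together with the surjectivity of $\tau_b$. The cleanest route—and the one I expect the paper to take over the next two sections—is to short-circuit both obstacles by observing that $(B,\cdot)$ with $a \cdot b := \lambda_a(b)$ carries the structure of a cycle set, and then appealing to the general correspondence (to be developed in Chapter 3) between cycle sets and non-degenerate involutive set theoretic solutions of the Yang-Baxter equation. This replaces explicit coordinate checking with a single structural verification of the cycle set axioms, which reads off directly from the brace identities we have already established.
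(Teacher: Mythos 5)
Your closing recommendation is, in outline, exactly the route the paper takes: equip $B$ with a cycle set structure and invoke the Chapter 3 correspondence between non-degenerate cycle sets and non-degenerate involutive (unitary) solutions, instead of chasing coordinates in the braid relation. But as written it contains a concrete error: the cycle set operation must be $a \cdot b := \lambda_a^{-1}(b)$, not $\lambda_a(b)$. With $\sigma_a = \lambda_a^{-1}$ one computes, using $\lambda_{u \circ v} = \lambda_u \lambda_v$ and $u \circ v = u + \lambda_u(v)$, that
\[
\sigma_{\sigma_a(b)}\,\sigma_a \;=\; \bigl(\lambda_a \lambda_{\lambda_a^{-1}(b)}\bigr)^{-1} \;=\; \lambda_{a \circ \lambda_a^{-1}(b)}^{-1} \;=\; \lambda_{a+b}^{-1},
\]
which is symmetric in $a$ and $b$ and is precisely the cycle set axiom; with $\sigma_a = \lambda_a$ the same computation would require $\lambda_{\lambda_a(b)\circ a} = \lambda_{\lambda_b(a)\circ b}$, which does not follow from the brace identities. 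A second omission: the correspondence only returns a \emph{non-degenerate} solution (i.e.\ bijectivity of $\tau_b$, the part you flag as hardest) when the cycle set itself is non-degenerate, and your proposal never addresses this. The paper gets both points for free by routing through the opposite right brace and Theorem 3.4.4, which produces a \emph{linear} cycle set $(B,+,\cdot)$ with $a\cdot b = \lambda_a^{-1}(b)$; Proposition 3.3.9 then gives non-degeneracy, and Lemma 3.2.9 converts the resulting QYBE solution into the stated YBE solution.

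The direct-attack portion of your proposal is partially sound: the identity $\lambda_a(b) \circ \tau_b(a) = a \circ b$ is correct and useful, and the first-coordinate collapse of the braid relation via $\lambda_{a\circ b} = \lambda_a\lambda_b$ is a genuine step. However, the remaining two coordinates and the surjectivity of $\tau_b$ are only promised (``more tangled manipulations\dots should pin down''), not delivered, so that branch is not a proof as it stands. If you repair the inverted operation and supply the non-degeneracy of the cycle set (e.g.\ via linearity), your argument becomes the paper's.
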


As noted earlier, theorem 2.1.4 is a special case of this, since $c$ satisfying $(a \circ b - a) \circ c = \lambda_a(b) \circ c = 0$ is clearly $\lambda_a(b)^{-1}$ (the inverse in the adjoint group), and $\lambda_{\lambda_a(b)^{-1}} = \lambda_{\lambda_a(b)}^{-1}$ by the lemma above. This theorem also makes clear the relation between the lambda map notation and the notation $r(x,y) = (\lambda_x(y),\tau_y(x))$ for a general solution.

\section{The Results of Rump}

Wolfgang Rump invented braces when studying the Yang-Baxter equation. Although most brace theorists use left braces for various reasons such as links to braided groups, when Rump originally introduced the concept he used right braces. In their proofs he and his collaborators also use cycle sets, another structure introduced by Rump. In this section we introduce cycle sets, giving many of Rump's early results and tracing the origin of braces from cycle sets via linear cycle sets. Rump states many of his results without proofs, or with short sketched proofs, so we will give very detailed and comprehensive proofs of all the results we use.

\subsection{The Quantum Yang-Baxter Equation}

We said that Rump invented braces while studying the Yang-Baxter equation, however in definition 2.1.1 what we defined was the \emph{braid} version of the Yang-Baxter equation preferred by brace theorists. Rump was using the version of the Yang-Baxter equation preferred by physicists, which we will refer to as the \emph{quantum} Yang-Baxter equation.

Note that our original definition was not of the \emph{classical} Yang-Baxter equation: there is in fact a classical version of the equation which we are not concerned with in this paper. Our terminology mirrors that of the literature, in the sense that papers using the braid version tend to be in pure mathematics and just call it the Yang-Baxter equation, whereas papers concerned with applications to physics tend to specify "quantum".

\begin{definition}
Let $X$ be a set and $R:X^2 \to X^2$. We say that $(X,r)$ is a set theoretic solution to the quantum Yang-Baxter equation iff:
\[ R_{12} R_{13} R_{23} = R_{23} R_{13} R_{12}, \]
where $R_{ij}:X^3 \to X^3$ acts as $R$ on the $i$th and $j$th components (in that order), an as the identity on the third.
\end{definition}

\noindent
\emph{Remark:} We adopt the convention of using $R$ for solutions to the quantum equation, and $r$ for solutions to the braid equation. When abbreviating we will use QYBE to refer to the quantum equation, and YBE will be reserved for the braid equation.

\begin{definition}
Let $(X,R)$ be a set theoretic solution to the quantum Yang-Baxter equation, $R(x,y) = (x^y, {}^x y)$.
\begin{itemize}
\item We say that $(X,R)$ is \emph{non-degenerate} iff $x \mapsto x^y$ and $x \mapsto {}^y x$ are bijections for all $y \in X$,
\item We say that $(X,R)$ is \emph{unitary} iff $(R_{21})^2 = \text{id}$.
\end{itemize}
\end{definition}

We can go back and forth between solutions to the quantum and braid equations by the following proposition:

\begin{proposition}[{\cite[Prop. 1.2]{ess}}]
Let $(X,R)$ be a set theoretic solution to the QYBE, and $p:X^2 \to X^2$ be given by $p(x,y) = (y,x)$. Then $(X,pR)$ is a set theoretic solution to the YBE. Furthermore this is a bijection which preserves non-degeneracy, and restricts to a bijection between unitary solutions and involutive solutions.
\end{proposition}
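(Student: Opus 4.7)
I would verify the four claims of the proposition in turn: that $(X, pR)$ satisfies the braid YBE iff $(X, R)$ satisfies the QYBE, that the assignment $R \mapsto pR$ is a bijection, that non-degeneracy is preserved, and that the bijection restricts to one between unitary and involutive solutions.

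For the correspondence of solutions I would work at the level of maps $X^3 \to X^3$, writing $P_{ij}$ for the transposition swapping positions $i$ and $j$. Setting $r = pR$, the key observation is that $r_{12} = P_{12} R_{12}$ and $r_{23} = P_{23} R_{23}$. Two structural facts drive the argument: the conjugation identities $P_{12} R_{23} P_{12} = R_{13}$ and $P_{23} R_{12} P_{23} = R_{13}$, verified by a direct coordinate check, and the braid relation $P_{12} P_{23} P_{12} = P_{23} P_{12} P_{23} = P_{13}$ in the symmetric group on three letters. From the conjugation identities one derives commutation rules such as $R_{12} P_{23} = P_{23} R_{13}$, $R_{23} P_{12} = P_{12} R_{13}$ and $R_{13} P_{12} = P_{12} R_{23}$, which allow one to sweep every transposition to the left of every $R_{ab}$, at the cost of changing the pair of indices. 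Applied to $r_{12} r_{23} r_{12} = P_{12} R_{12} P_{23} R_{23} P_{12} R_{12}$, three successive commutations bring this into the form $P_{12} P_{23} P_{12} R_{23} R_{13} R_{12}$, and the braid relation collapses it to $P_{13} R_{23} R_{13} R_{12}$. An identical manipulation on $r_{23} r_{12} r_{23}$ yields $P_{13} R_{12} R_{13} R_{23}$. Since $P_{13}$ is invertible, equality of these two products is exactly the QYBE, and every step is reversible.

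The bijection is then immediate from $p^2 = \mathrm{id}$, which makes $R \mapsto pR$ invertible with inverse $r \mapsto pr$; the previous step confirms that it restricts to a bijection between QYBE and YBE solutions. For preservation of non-degeneracy, writing $R(x, y) = (x^y, {}^x y)$ gives $r(x, y) = ({}^x y, x^y) = (\lambda_x(y), \tau_y(x))$, so the two component maps appearing in $r$ are precisely those appearing in $R$ with their coordinate positions swapped; matching Definition 3.1.2 against Definition 2.1.3, the two non-degeneracy hypotheses coincide after relabeling the variables. For the last clause, involutivity $r^2 = \mathrm{id}$ unfolds as $(pR)^2 = \mathrm{id}$, i.e., $pRpR = \mathrm{id}$, which is a direct restatement of the unitarity condition $(R_{21})^2 = \mathrm{id}$ in the conventions of Definition 3.1.2.

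The main obstacle will be the bookkeeping in the first part: one has to track several indexed transpositions and $R_{ab}$'s simultaneously, and the argument only becomes short once one recognises that the braid relation collapses the product of three transpositions into a single $P_{13}$, so that after moving all permutations to the left only the QYBE is left to check.
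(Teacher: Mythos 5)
The paper gives no proof of this proposition at all --- it is quoted from Etingof--Schedler--Soloviev and used as a black box --- so there is nothing in-paper to compare against; your argument has to stand on its own, and it does. The core reduction is correct: with $r_{12}=P_{12}R_{12}$, $r_{23}=P_{23}R_{23}$, the conjugation identities $P_{12}R_{23}P_{12}=P_{23}R_{12}P_{23}=R_{13}$, and the braid relation $P_{12}P_{23}P_{12}=P_{23}P_{12}P_{23}=P_{13}$, sweeping the transpositions leftward does give $r_{12}r_{23}r_{12}=P_{13}R_{23}R_{13}R_{12}$ and $r_{23}r_{12}r_{23}=P_{13}R_{12}R_{13}R_{23}$ (for the second you also need $R_{13}P_{23}=P_{23}R_{12}$, which falls under your ``such as''), and cancelling $P_{13}$ leaves exactly the QYBE; every step is an identity, so the equivalence and hence the bijection via $p^2=\id$ are established. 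The non-degeneracy clause is also handled correctly. The one point to flag is the last clause: reading the paper's Definition 3.1.2 literally, $R_{21}=pRp$ and so $(R_{21})^2=pR^2p$, which says $R^2=\id$, \emph{not} $pRpR=\id$; the condition you actually match to involutivity of $pR$ is the Etingof--Schedler--Soloviev unitarity condition $R_{21}R=\id$. That is certainly what the paper intends (Lemma 3.1.4 silently uses $(pR)^2=\id$ as the unitary condition), so your identification is the right one, but it is a correction of the paper's stated definition rather than a ``direct restatement'' of it, and it would be worth saying so explicitly.
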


We now give a more explicit characterisation of non-degenerate, unitary, set theoretic solutions, which will be useful in our proofs in the next section. (Rump states these formulae in \cite{ru1} but does not prove them.)

\begin{lemma}
Let $X$ be a set, and $R:X^2 \to X^2$ be denoted by $R(x,y) = (x^y, {}^x y)$. Then $(X,R)$ is a non-degenerate unitary solution to the QYBE if and only if:
\begin{enumerate}
\item (Non-degenerate) The maps $x \mapsto {}^x y$, $y \mapsto y^x$ are bijections,
\item (Unitary) The following holds:
\begin{align}
{}^{({}^x y)}(x^y) &= x,\\
{({}^x y)}^{(x^y)} &= y.
\end{align}
\item (Solution to the QYBE) The following holds:
\begin{align}
(x^y)^z &= (x^{({}^y z)})^{(y^z)},\\
{}^{(x^{({}^y z)})}(y^z) &= ({}^x y)^{({}^{(x^y)} z)},\\
{}^x({}^y z) &= {}^{({}^x y)}({}^{(x^y)} z).
\end{align}
\end{enumerate}
\end{lemma}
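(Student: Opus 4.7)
The plan is to unpack each of the three defining properties of a non-degenerate unitary QYBE-solution directly in the component notation $R(x,y) = (x^y, {}^x y)$, and read off the stated formulas. Each direction of the equivalence then reduces to matching triples (or pairs) coordinate by coordinate.

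Non-degeneracy is immediate: the two coordinate maps appearing in Definition 3.1.2 are literally $y \mapsto y^x$ and $x \mapsto {}^x y$, so asking them to be bijections is condition (1) verbatim.

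For the unitary condition I would invoke Proposition 3.1.3 and pass to the associated braid solution $r = pR$, for which $r(x,y) = ({}^x y,\ x^y)$. Unitarity of $R$ is then involutivity of $r$, and expanding
\[
r(r(x,y)) = r({}^x y,\ x^y) = \bigl({}^{({}^x y)}(x^y),\ ({}^x y)^{(x^y)}\bigr) = (x,y)
\]
and equating components gives exactly (3.2) and (3.3); every step is an equivalence, so the converse is automatic.

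The QYBE itself is pure bookkeeping: apply both sides of $R_{12}R_{13}R_{23} = R_{23}R_{13}R_{12}$ to an arbitrary triple $(x,y,z)$, one factor at a time, tracking each coordinate. After the three applications each side yields
\[
R_{12}R_{13}R_{23}(x,y,z) = \bigl((x^{({}^y z)})^{(y^z)},\ {}^{(x^{({}^y z)})}(y^z),\ {}^x({}^y z)\bigr),
\]
\[
R_{23}R_{13}R_{12}(x,y,z) = \bigl((x^y)^z,\ ({}^x y)^{({}^{(x^y)} z)},\ {}^{({}^x y)}({}^{(x^y)} z)\bigr).
\]
Equating these triples coordinate by coordinate produces precisely (3.4), (3.5), (3.6), and conversely these three equations reassemble into the equality of the two sides of the QYBE. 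The main obstacle here is nothing conceptual, only notational: when $R_{13}$ is applied, the middle argument is carried along unchanged while $R$ acts on the first and third slots, and it is easy to misplace a superscript at that step. I would protect against this by explicitly relabelling the triple before each $R_{ij}$ is applied, so that the three-step cascade on each side is transparent.
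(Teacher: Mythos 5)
Your proposal is correct and follows essentially the same route as the paper's own proof: non-degeneracy is read off from Definition 3.1.2, unitarity is translated via Proposition 3.1.3 into $(pR)^2 = \mathrm{id}$ and expanded componentwise, and the QYBE conditions are obtained by applying both sides of $R_{12}R_{13}R_{23} = R_{23}R_{13}R_{12}$ to a triple and equating coordinates, with your two computed triples matching the paper's exactly. The only discrepancy is a harmless off-by-one in your equation labels relative to the paper's numbering.
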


\begin{proof}
1 is just the definition of a $(X,R)$ being non-degenerate. By proposition 3.1.3, $(R_{21})^2 = \text{id}$ is equivalent to $(pR)^2 = \text{id}$, which we now see is equivalent to 3.1 and 3.2:
\[ (x,y) = (pR)^2(x,y) = pR({}^x y, x^y) = ({}^{({}^x y)}(x^y), {({}^x y)}^{(x^y)}). \]

We see that 3.3, 3.5 and 3.5 are equivalent to $R_{12} R_{13} R_{23} = R_{23} R_{13} R_{12}$ by calculating each side separately:
\begin{align*}
R_{12} R_{13} R_{23}(x,y,z) &= R_{12} R_{13} (x, y^z, {}^y z)\\
&= R_{12}(x^{({}^y z)}, y^z, {}^x ({}^y z))\\
&= ((x^{({}^y z)})^{(y^z)}, {}^{(x^{({}^y z)})}(y^z), {}^x ({}^y z)),\\
\\
R_{23} R_{13} R_{12}(x,y,z) &= R_{23} R_{13} (x^y, {}^x y, z)\\
&= R_{23} ((x^y)^z, {}^x y, {}^{(x^y)} z)\\
&= ((x^y)^z, ({}^x y)^{({}^{(x^y)} z)}, {}^{({}^x y)}({}^{(x^y)} z)).
\end{align*}
\end{proof}

\subsection{Cycle Sets}

\begin{definition}
A set $X$ with binary operation $\cdot$ is called a \emph{cycle set}, iff for all $x \in X$ the left multiplication $\sigma_x:y \mapsto x \cdot y$ is a bijection, and
\begin{equation}
(x \cdot y)\cdot(x \cdot z) = (y \cdot x)\cdot(y \cdot z)
\end{equation}
for all $x,y,z \in X$. Introducing the notation $y^x = \sigma_x^{-1}(y)$, we have
\begin{equation}
x \cdot y^x = (x\cdot y)^x = y.
\end{equation}
\end{definition}

The similarity in notation between $R(x,y) = (x^y, {}^x y)$ (or $R(x,y) = ({}^x y, x^y)$) for solutions to the YBE, and $x^y$ for the inverse multiplication in a cycle set, is no coincidence. Rump showed that defining ${}^x y = x^y \cdot y$ on a cycle set $(X,\cdot)$, we have a unitary set-theoretic solution to the QYBE $(X,R)$ with $R(x,y) = (x^y, {}^x y)$. In fact we have a bijection between cycle sets and \emph{left non-degenerate} solutions to the QYBE, which are solutions $(X,R)$ where $R(x,y) = (x^y, {}^x y)$, for which $x \mapsto x^y$ is a bijection. (So all non-degenerate solutions are left non-degenerate, but a left non-degenerate solution is not necessarily non-degenerate.)

In his paper \cite{ru1} Rump states this bijective correspondence but doesn't provide a complete proof. In the case of unitary solutions he proves that 3.3 and 3.6 are equivalent, but he merely states the fact that 3.4 and 3.5 also follow from 3.6. Therefore we provide our own proof.

\begin{theorem}[{\cite[Prop. 1]{ru1}}]
There is a bijective correspondence between cycle sets and left non-degenerate unitary set theoretic solutions to the QYBE.
\end{theorem}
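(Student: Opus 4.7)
The plan is to construct the bijection explicitly and verify each direction.

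In the forward direction, given a cycle set $(X,\cdot)$, I define $R : X^2 \to X^2$ by
\[
R(x,y) = \bigl(x^y,\; x^y \cdot y\bigr),
\]
so that in the notation $R(x,y) = (x^y, {}^x y)$ we have ${}^x y := x^y \cdot y$. Left non-degeneracy is immediate because $x \mapsto x^y = \sigma_y^{-1}(x)$ is a bijection by the definition of a cycle set. For unitarity (identities 3.1 and 3.2), I compute $(pR)^2(x,y) = pR({}^x y,\, x^y)$ directly: writing $x' = x^y \cdot y$ and $y' = x^y$, the element $x'^{y'}$ is by definition the unique $c$ with $y' \cdot c = x'$, i.e.\ $x^y \cdot c = x^y \cdot y$, forcing $c = y$ by injectivity of $\sigma_{x^y}$. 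Then ${}^{x'} y' = x'^{y'} \cdot y' = y \cdot x^y = x$, so $pR(x',y') = (x,y)$ as required.

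All three QYBE identities 3.3--3.5 then follow from the cycle set axiom 3.6. For 3.3 I would reproduce Rump's argument: rewriting 3.6 in the equivalent dual form $(a^{x \cdot y})^x = (a^{y \cdot x})^y$ (obtained by inverting the operator identity $\sigma_{x \cdot y}\sigma_x = \sigma_{y \cdot x}\sigma_y$) and substituting $x \leftarrow y^z,\ y \leftarrow z$ recovers 3.3, using $z \cdot y^z = y$. For 3.5, the plan is to expand both sides using ${}^a b = a^b \cdot b$, observe that the right-hand side equals $(x^y \cdot y)^{((x^y)^z \cdot z)} \cdot ((x^y)^z \cdot z)$, and use 3.6 once to compute the first factor as $(x^y)^z \cdot y^z$; a second application of 3.6 then rewrites the product as $(y^z \cdot (x^y)^z) \cdot (y^z \cdot z)$, and finally 3.3 converts $y^z \cdot (x^y)^z$ into $x^{{}^y z}$, matching the left-hand side. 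The derivation of 3.4 is analogous.

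The reverse direction is dual: given a left non-degenerate unitary $R(x,y) = (x^y, {}^x y)$, define $y \cdot x$ to be the unique $a \in X$ with $a^y = x$, which exists by left non-degeneracy. The cycle set axiom 3.6 follows from 3.3 by running the substitution used in the forward direction in reverse. Checking that the two constructions are mutually inverse is then a matter of unwinding the definitions and invoking unitarity 3.1 and 3.2 to identify the reconstructed ${}^x y$ with the given one.

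The main obstacle is keeping track of the deeply nested notation — ${}^x({}^y z)$, $(x^y)^z$, ${}^{({}^x y)}({}^{(x^y)} z)$ and the like — without error in the derivation of 3.4 and 3.5. The cleanest strategy is probably to introduce temporary abbreviations such as $u = x^y$, $v = {}^x y$, $w = (x^y)^z$, reduce each identity to a short sequence of applications of 3.6 and 3.3 in that compressed form, and only translate back at the very end.
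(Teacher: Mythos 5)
Your proposal is correct and follows essentially the same route as the paper: the same construction $R(x,y)=(x^y,\,x^y\cdot y)$, the same direct verification of the unitarity identities 3.1 and 3.2 via $y\cdot x^y = x$, and a reduction of the QYBE identities 3.3--3.5 to the cycle-set axiom 3.6 by invertible substitutions. The only difference is organisational: where the paper runs three separate substitution chains showing each of 3.3, 3.4 and 3.5 individually equivalent to 3.6, you establish $3.6 \Leftrightarrow 3.3$ once (in the clean operator form $\sigma_{x\cdot y}\sigma_x=\sigma_{y\cdot x}\sigma_y$) and then derive 3.5 and 3.4 by direct computation from 3.6 together with 3.3 --- and indeed your key identity $(x^y\cdot y)^{((x^y)^z\cdot z)}=(x^y)^z\cdot y^z$ settles 3.4 immediately, so the claimed ``analogous'' step is justified and the packaging is slightly more economical than the paper's.
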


\begin{proof}
Given a cycle set $(X, \cdot)$ define $R(x,y) = (x^y, {}^x y)$ where ${}^x y = x^y \cdot y$. We have that $x \mapsto x^y = \sigma_y^{-1}(x)$ is a bijection since $\sigma_y$ is a bijection, so $r$ is left non-degenerate.

We calculate that it satisfies 3.1 and 3.2.
\begin{align*}
{({}^x y)}^{(x^y)} &= {(x^y \cdot y)}^{(x^y)}\\
&= y,\\
\\
{}^{({}^x y)}(x^y) &= {({}^x y)}^{(x^y)} \cdot x^y\\
&= y \cdot x^y\\
&= x.
\end{align*}

We have left to prove that assuming the above conditions 3.6, 3.3, 3.4 and 3.5 are equivalent. First we show that 3.6 and 3.3 are equivalent:
\begin{align*}
(x^{({}^y z)})^{(y^z)} = (x^y)^z  &\iff  x^{({}^y z)} =  y^z \cdot (x^y)^z\\
&\iff  x = {}^y z \cdot(y^z \cdot (x^y)^z),
\end{align*}
then substitute $x = (y \cdot (z\cdot x')) = \sigma_y\sigma_z(x')$:
\begin{align*}
y \cdot (z \cdot x') &= {}^y z \cdot(y^z \cdot ((y \cdot (z \cdot x'))^y)^z)\\
& = {}^y z \cdot(y^z \cdot x')\\
& = (y^z \cdot z)\cdot (y^z \cdot x'),
\end{align*}
and substitute $y = z\cdot y' = \sigma_z(y')$:
\begin{align*}
(z \cdot y') \cdot (z \cdot x') &= ((z \cdot y')^z \cdot z)\cdot ((z \cdot y')^z \cdot x')\\
&= (y' \cdot z)\cdot (y' \cdot x').
\end{align*}
This is 3.6, and since $\sigma_y\sigma_z$ and $\sigma_z$ are bijections, our substitutions are invertible and therefore we have equivalence.

We now prove that 3.6 and 3.4 are equivalent when assuming the unitary condition. From 3.4, we have:
\begin{align*}
    {}^{(x^{({}^y z)})}(y^z) = ({}^{x}y)^{({}^{(x^y)}z)} &\iff ({}^{(x^y)}z) \cdot ({}^{(x^{({}^{y}z)})}(y^z)) = {}^{x}y,
\end{align*}
substitute $x =({}^{y}z)\cdot x_{1} =\sigma_{\sigma_{\sigma_z^{-1}(y)}(z)}(x_{1})$ on the right hand side equation, to get:
\begin{align*}
    (((({}^{y}z)\cdot x_{1})^{y})^{z}\cdot z)\cdot (({x_{1}}^{(y^{z})})\cdot (y^z)) = (({}^{y}z)\cdot x_{1})^y \cdot y,
\end{align*}
then substitute $y=z\cdot y_{1} = \sigma_z(y_{1})$:
\begin{align*}
    (((y_{1} \cdot z)\cdot x_{1})^{z \cdot y_{1}})^z)\cdot z) \cdot (x_{1}^{y_{1}} \cdot y_{1}) = ((y_{1} \cdot z) \cdot x_{1})^{z \cdot y_{1}}\cdot (z\cdot y_{1}),
\end{align*}
and substitute $x_{1}= ((z \cdot y_{1})\cdot(z\cdot x_{2}))^{(y_{1}\cdot z)} = \sigma_{\sigma_{y_1}(z)}^{-1} \sigma_{\sigma_z(y_1)} \sigma_z (x_2)$, which is equivalent to letting $(y_{1}\cdot z)\cdot x_{1} = (z\cdot y_{1})\cdot (z\cdot x_{2})$:
\begin{align*}
    (z\cdot x_{2})\cdot(z\cdot y_{1}) &= ((z\cdot x_{2})^z\cdot z)\cdot((((z\cdot y_{1})\cdot(z\cdot x_{2}))^{(y_{1}\cdot z)})^{y_{1}}\cdot y_{1})\\
    &= (x_{2}\cdot z)\cdot((((z\cdot y_{1})\cdot(z\cdot x_{2}))^{(y_{1}\cdot z)})^{y_{1}}\cdot y_{1}).
\end{align*}
Since $\sigma_x$ is a bijection, it follows that 3.4 holds iff:
    $$x_{2}=(((z\cdot y_{1})\cdot(z\cdot x_{2}))^{(y_{1}\cdot z)})^{y_{1}}.$$
Notice that: 
\begin{align*}
    (((y_{1}\cdot z)\cdot(y_{1}\cdot x_{2}))^{(y_{1}\cdot z)})^{y_{1}} &= (y_{1}\cdot x_{2})^{y_{1}}\\
    &= x_{2}.
\end{align*}
Therefore 3.4 holds iff:
\begin{align*}
    (((y_{1}\cdot z)\cdot(y_{1}\cdot x_{2}))^{(y_{1}\cdot z)})^{y_{1}} = (((z\cdot y_{1})\cdot(z\cdot x_{2}))^{(y_{1}\cdot z)})^{y_{1}},
\end{align*}
this is equivalent to:
\begin{align*}
    (y_{1}\cdot z)\cdot(y_{1}\cdot x_{2}) = (z\cdot y_{1})\cdot(z\cdot x_{2})
\end{align*}
because $\sigma_x$ is a bijection. Again, all of our substitutions were invertible, and hence 3.4 and 3.6 are equivalent. 
    
Finally we prove that assuming the unitary condition, 3.5 is equivalent to 3.6. From 3.5, we have:
\begin{align*}
    (x^{({}^{y}z)})\cdot ({}^{y}z) = ((^{x}y)^{(^{(x^{y})}z)})\cdot(^{(x^{y})}z),
\end{align*}
substitute $x=y\cdot x_{1} = \sigma_y(x_{1})$:
\begin{align*}
    ((y\cdot x_{1})^{(y^{z}\cdot z)})\cdot(^{y}z) = ((x_{1}\cdot y)^{(^{x_{1}}z)}),
\end{align*}
then substitute $y=z\cdot y_{1} = \sigma_z(y_{1})$:
\begin{align*}
    (((z\cdot y_{1} )\cdot x_{1})^{(y_{1}\cdot z)})\cdot (y_{1}\cdot z) = ((x_{1}\cdot (z\cdot y_{1}))^{(^{x_{1}}z)})\cdot (^{x_{1}}z),
\end{align*}
and finally substitute $x_{1} = z\cdot x_{2} = \sigma_z(x_{2})$:
\begin{align*}
    (((z\cdot y_{1} )\cdot (z\cdot x_{2}))^{(y_{1}\cdot z)})\cdot (y_{1}\cdot z) = (((z\cdot x_{2})\cdot (z\cdot y_{1}))^{(x_{2}\cdot z)})\cdot (x_{2}\cdot z).
\end{align*}
Notice this equation is very similar to 3.6. In fact if we assume 3.6 holds, in particular $(z\cdot y_{1})\cdot(z\cdot x_{2}) = (y_{1}\cdot z)\cdot(y_{1}\cdot x_{2})$, we get:
\begin{align*}
    (y_{1}\cdot x_{2})\cdot(y_{1}\cdot z) = ((z\cdot x_{2})\cdot (z\cdot y_{1})^{(x_{2}\cdot z)})\cdot (x_{2}\cdot z).
\end{align*}
If we assume again that 3.6 holds, in particular $(z\cdot x_{2})\cdot(z\cdot y_{1}) = (x_{2}\cdot z)\cdot(x_{2}\cdot y_{1})$, we get:
\begin{align*}
    (y_{1}\cdot x_{2})\cdot(y_{1}\cdot z) = (x_{2}\cdot y_{1})\cdot (x_{2}\cdot z), 
\end{align*}
which is 3.6. Now, since $\sigma_x$ is a bijection and again all our substitutions were invertible, it follows that 3.6 and 3.5 are equivalent.
\end{proof}

The second half of this proof actually gives us a (novel) corollary which will be very useful for checking when a map satisfying the unitary condition is a solution:

\begin{corollary}
Let $X$ be a set and $R:X^2 \to X^2$ - $r(x,y) = (x^y, {}^x y)$ - be a map satisfying the unitary condition. Then $(X,R)$ is a left non-degenerate unitary solution of the QYBE if $x \mapsto x^y$ is a bijection, and it satisfies one of 3.3, 3.4 or 3.5.
\end{corollary}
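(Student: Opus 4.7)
The plan is to reduce the corollary to the three equivalences already proved inside Theorem 3.2.2. Under the unitary condition and left non-degeneracy, each of 3.3, 3.4, 3.5 was shown there to be equivalent to the cycle set axiom 3.6, so any one of them forces the other two, and together with unitarity they give the QYBE by Lemma 3.1.4.

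First, I would manufacture a candidate cycle set operation from $R$. Using left non-degeneracy, define $\sigma_y$ to be the inverse of the bijection $x \mapsto x^y$ and set $y \cdot x := \sigma_y(x)$; each $\sigma_y$ is then a bijection by construction. The second unitary identity $({}^x y)^{(x^y)} = y$ rearranges to ${}^x y = x^y \cdot y$, matching the formula from the theorem, so our $R$ coincides with the map associated to $(X,\cdot)$ under the correspondence of 3.2.2.

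Next, I would invoke the three equivalences from the proof of 3.2.2, namely $\text{3.3} \iff \text{3.6}$, $\text{3.4} \iff \text{3.6}$, and $\text{3.5} \iff \text{3.6}$, each established independently using only the unitary condition and the bijectivity of the $\sigma_y$. Assuming any one of 3.3, 3.4, 3.5 therefore yields 3.6, and with 3.6 in hand the other two conditions follow immediately. Together with the unitary condition, Lemma 3.1.4 then gives that $(X,R)$ is a left non-degenerate unitary solution of the QYBE.

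The main thing to be careful about---more a matter of bookkeeping than a real obstacle---is verifying that the three equivalences inside 3.2.2 truly stand alone, rather than being linked in a chain that secretly requires two of them being assumed simultaneously. A pass through the substitution arguments in that proof confirms that each only uses the unitary condition and the bijectivity of the $\sigma_y$, so no circularity arises and the corollary follows directly.
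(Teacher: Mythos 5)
Your proposal is correct and matches the paper's own (implicit) justification: the corollary is stated precisely as a byproduct of the second half of the proof of Theorem 3.2.2, where each of 3.3, 3.4, 3.5 is shown individually equivalent to the cycle-set identity 3.6 using only the unitary condition and the bijectivity of $x \mapsto x^y$. Your extra check that the three equivalences stand alone, and your derivation of ${}^x y = x^y \cdot y$ from the unitary identity, are exactly the points the paper relies on.
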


This can be restated as a lemma on whether an arbitrary involution is a solution to the YBE, which people studying non-degenerate, involutive solutions would find very useful to save time in their calculations:

\begin{lemma}
Let $X$ be a set and $r:X^2 \to X^2$ be given by $r(x,y) = (\lambda_x(y), \tau_y(x))$. Then $(X,r)$ is a solution to the YBE if and only if the following hold:
\begin{align*}
\tau_x \tau_y &= \tau_{\tau_x(y)}\tau_{\lambda_y(x)},\\
\lambda_{\tau_{\lambda_y(z)}(x)} \tau_z(y) &= \tau_{\lambda_{\tau_y(x)}(z)} \lambda_x(y),\\
\lambda_x \lambda_y &= \lambda_{\lambda_x(y)} \lambda_{\tau_y(x)}.
\end{align*}
If additionally $r$ is an involution, so $r^2 = \id$, then only one of the 3 conditions above needs to hold for $(X,r)$ to be a solution to the YBE.
\end{lemma}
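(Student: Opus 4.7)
The plan is to reduce the main biconditional to a coordinate-wise calculation, and then to transport the result through the YBE/QYBE dictionary of proposition 3.1.3 to finish the involutive half via corollary 3.2.3.

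First I would unfold the braid relation $r_1 r_2 r_1 = r_2 r_1 r_2$. Using
\[ r_1(x,y,z) = (\lambda_x(y),\tau_y(x),z), \qquad r_2(x,y,z) = (x,\lambda_y(z),\tau_z(y)), \]
a direct calculation gives
\begin{align*}
r_1 r_2 r_1(x,y,z) &= \bigl(\lambda_{\lambda_x(y)}\lambda_{\tau_y(x)}(z),\ \tau_{\lambda_{\tau_y(x)}(z)}(\lambda_x(y)),\ \tau_z\tau_y(x)\bigr), \\
r_2 r_1 r_2(x,y,z) &= \bigl(\lambda_x\lambda_y(z),\ \lambda_{\tau_{\lambda_y(z)}(x)}(\tau_z(y)),\ \tau_{\tau_z(y)}\tau_{\lambda_y(z)}(x)\bigr).
\end{align*}
Equating the first, second and third coordinates (for all $x,y,z$) yields exactly the third, second and first identities of the lemma; the third coordinate reads $\tau_z\tau_y = \tau_{\tau_z(y)}\tau_{\lambda_y(z)}$, which is the first listed identity after the relabelling $z \leftrightarrow x$. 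This establishes the main iff in both directions at once.

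For the involutive clause, I would pass through the bijection of proposition 3.1.3 by setting $R = pr$, so that $R(x,y) = (\tau_y(x),\lambda_x(y))$. In the QYBE notation $R(x,y) = (x^y, {}^x y)$ this identifies $x^y = \tau_y(x)$ and ${}^x y = \lambda_x(y)$. Under these identifications $r^2 = \id$ is precisely the unitary pair (3.1)--(3.2), and the three lemma conditions correspond, in order, to (3.3), (3.4) and (3.5). Corollary 3.2.3 then gives exactly the required conclusion: given the unitary condition, any single one of (3.3), (3.4), (3.5) suffices to make $R$ a left non-degenerate solution to the QYBE, and hence $r$ a solution to the YBE.

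The one subtlety to keep an eye on is the bijectivity hypothesis built into corollary 3.2.3 via the substitution arguments in the proof of theorem 3.2.2: $r^2 = \id$ on its own does not force each $\lambda_x$ or $\tau_y$ to be a bijection (one can build small involutive toy examples where they collapse to constants), so the involutive half of the lemma tacitly requires the non-degeneracy that is standard throughout this part of the paper. Once that is acknowledged, the reduction is immediate; otherwise one would have to re-run the chain of invertible substitutions from the proof of theorem 3.2.2 in this notation, which is the only place where real work beyond notation-translation is hiding.
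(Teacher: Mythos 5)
Your proof is correct, and for the main biconditional it takes a genuinely more direct route than the paper. The paper never expands $r_1r_2r_1 = r_2r_1r_2$ in the $\lambda,\tau$ notation: it sets $R(x,y) = (\tau_y(x),\lambda_x(y))$ so that $r = pR$, invokes proposition 3.1.3 to reduce to the QYBE, and then reads the three identities off as translations of (3.3)--(3.5) from lemma 3.1.4. Your coordinate-wise expansion of the braid relation proves the same equivalence without routing through the QYBE dictionary; both displayed triples are correct, and the relabelling $z \leftrightarrow x$ in the third coordinate is exactly right. What the paper's route buys is reuse of an already-verified computation; what yours buys is a self-contained argument whose first half does not depend on proposition 3.1.3 at all. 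For the involutive clause you and the paper do the same thing: pass to $R = pr$ and quote corollary 3.2.3.

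Your closing caveat is not mere caution --- it identifies a genuine gap that the paper's own proof glosses over. Corollary 3.2.3 explicitly hypothesises that $x \mapsto x^y$ is a bijection, and the chain of invertible substitutions in theorem 3.2.2 on which it rests needs the left multiplications to be bijective; the proof of this lemma silently drops that hypothesis, and it cannot be dropped. On $X = \{a,b\}$ take $\lambda_a \equiv a$, $\lambda_b \equiv b$ (constant maps), $\tau_a = \id$ and $\tau_b$ the transposition. Then $r^2 = \id$, and the third condition $\lambda_x\lambda_y = \lambda_{\lambda_x(y)}\lambda_{\tau_y(x)}$ holds (both sides are the constant map with value $x$), but the first condition fails at $x = y = b$, since $\tau_b\tau_b = \id$ while $\tau_{\tau_b(b)}\tau_{\lambda_b(b)} = \tau_a\tau_b = \tau_b$; so $(X,r)$ is not a solution. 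Hence the final clause of the lemma is false as literally stated and requires non-degeneracy as an explicit hypothesis, exactly as you say.
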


\begin{proof}
Let $R(x,y) = (x^y, {}^x y) = (\tau_y(x),\lambda_x(y))$, so that by 3.1.3 $(X,pR) = (X,r)$ is a solution to the YBE if and only if $(X,R)$ is a solution to the QYBE. By 3.1.4 this is the case when $R$ satisfies 3.3, 3.4 and 3.5, which substituting $x^y = \tau_y(x)$ and ${}^x y = \lambda_x(y)$ give:
\begin{align*}
\tau_z (\tau_y(x)) = \tau_{\tau_z(y)}(\tau_{\lambda_y(z)}(x)),\\
\lambda_{\tau_{\lambda_y(z)}(x)} (\tau_z(y)) = \tau_{\lambda_{\tau_y(x)}(z)}( \lambda_x(y)),\\
\lambda_x (\lambda_y(z)) = \lambda_{\lambda_x(y)}( \lambda_{\tau_y(x)}(z)),
\end{align*}
which give the above. By the above corollary, if $R$ satisfies the unitary condition then only one of 3.3, 3.4 and 3.5 need hold, and so since $R$ is unitary if and only if $r$ is an involution (again by 3.1.3), if $r$ is an involution then only one of the above need hold.
\end{proof}

We can restrict the bijection of theorem 3.2.2 from left non-degenerate solutions to non-degenerate solutions by imposing an additional condition on the cycle sets.

\begin{definition}
We call a cycle set $X$ \emph{non-degenerate} iff the map $x \mapsto x \cdot x$ is a bijection. A binary operation $\odot:X^2 \to X$ will be called the \emph{dual} to the operation $\cdot$ if the following holds for all $x,y \in X$:
\begin{align}
(x \cdot y) \odot (y \cdot x) &= x,\\
(x \odot y) \cdot (y \odot x) &= x.
\end{align}
\end{definition}

\begin{proposition}[{\cite[Prop. 2]{ru1}}]
The following are equivalent:
\begin{enumerate}
\item $X$ is a non-degenerate cycle set,
\item $(X,R)$ is a non-degenerate solution to the QYBE,
\item There exists a dual to the operation $\cdot:X^2 \to X$.
\end{enumerate}
\end{proposition}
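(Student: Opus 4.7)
The plan is to prove the cycle $(1) \Rightarrow (2) \Rightarrow (3) \Rightarrow (1)$. The technical heart is the identity
\[ q \circ \phi_y = \sigma_y \circ q, \]
where $q(x) := x \cdot x$ is the squaring map and $\phi_y(x) := {}^y x = y^x \cdot x$ is the map whose bijectivity is precisely the non-trivial half of $R$ being non-degenerate (the other half, $x \mapsto x^y = \sigma_y^{-1}(x)$, is always bijective, since $\sigma_y$ is). This identity is obtained by first setting $z = y$ in the cycle set axiom $(x \cdot y) \cdot (x \cdot z) = (y \cdot x) \cdot (y \cdot z)$ to extract the auxiliary relation $q(u \cdot v) = (v \cdot u) \cdot q(v)$, then specialising to $u = y^x$, $v = x$ and collapsing the right hand side using $x \cdot y^x = y$ from (3.10).

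For $(1) \Rightarrow (2)$, the identity rewrites as $\phi_y = q^{-1}\,\sigma_y\, q$, so whenever $q$ is bijective each $\phi_y$ is a conjugate of the bijection $\sigma_y$ and is therefore bijective; this gives non-degeneracy of $(X,R)$.

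For $(2) \Rightarrow (3)$, I would study the auxiliary map $\Psi \colon X^2 \to X^2$, $\Psi(x,y) := (x \cdot y,\, y \cdot x)$. Solving $\Psi(x,y) = (a,b)$ forces $y = a^x$ from the first coordinate, so the second coordinate reduces to $\phi_a(x) = b$, and thus $\Psi$ is bijective if and only if each $\phi_a$ is. Once this is established, write $(a \odot b,\, b \odot a) := \Psi^{-1}(a,b)$; letting $D(a,b) := (a \odot b,\, b \odot a)$, the identities $D\Psi = \id$ and $\Psi D = \id$ translate verbatim into (3.11) and (3.12).

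For $(3) \Rightarrow (1)$, I would restrict (3.11) and (3.12) to the diagonal $y = x$: (3.11) collapses to $q(x) \odot q(x) = x$, which exhibits $z \mapsto z \odot z$ as a left inverse of $q$ and so forces injectivity, while (3.12) collapses to $q(x \odot x) = x$, giving surjectivity. The main obstacle will be establishing the identity $q\phi_y = \sigma_y q$: without it, deducing bijectivity of $\phi_y$ directly from bijectivity of $q$ is awkward, but once it is in hand $(1) \Rightarrow (2)$ becomes a one-line conjugation and the remaining implications reduce to pure bookkeeping with $\Psi$ and the diagonal restrictions of the dual axioms.
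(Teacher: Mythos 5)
Your proof is correct and follows the same overall route as the paper's ($1\Rightarrow 2\Rightarrow 3\Rightarrow 1$, with the squaring map $q$ doing the real work and the diagonal specialisation closing the cycle), but two of the three steps are packaged differently, in a way that genuinely streamlines the argument. Your conjugation identity $q\,\phi_y=\sigma_y\,q$ is exactly the chain $x\cdot(y\cdot y)=(y\cdot x^y)\cdot(y\cdot y)=(x^y\cdot y)\cdot(x^y\cdot y)$ that the paper computes element-by-element inside its $1\Rightarrow 2$ step; isolating it as a lemma turns that step into a one-line conjugation, as you say, and also makes the injectivity half of the bijection (which the paper leaves implicit) automatic. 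The more substantial difference is in $2\Rightarrow 3$: the paper defines $y\odot x$ as the inverse of $x\mapsto {}^y x$, verifies the first dual axiom by the substitution $u=y^x$, and then has to invoke the unitary identity (3.1) to establish the second; your two-variable map $\Psi(x,y)=(x\cdot y,\,y\cdot x)$ delivers both dual axioms simultaneously from $\Psi^{-1}\Psi=\Psi\Psi^{-1}=\id$, with no appeal to the solution or unitary structure at all. The one point you should make explicit there is why $\Psi^{-1}$ has the symmetric form $(a,b)\mapsto(a\odot b,\,b\odot a)$ for a single binary operation $\odot$: this requires the equivariance $\Psi p=p\Psi$ with $p(x,y)=(y,x)$, which is immediate from the definition of $\Psi$ but is precisely what makes $\odot$ well defined. (A trivial bookkeeping note: the dual axioms are (3.8) and (3.9) in the paper, and $x\cdot y^x=y$ is (3.7), not (3.10)--(3.12) as in your write-up.)
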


\begin{proof}
($1 \Rightarrow 2$) We need to show that $y \mapsto {}^x y$ is a bijection. Let $x,z \in X$. There exists a unique $y \in X$ such that $y \cdot y = (z \cdot z)^x$, so:
$$ z \cdot z = x \cdot (y \cdot y) = (y \cdot x^y) \cdot (y \cdot y) = (x^y \cdot y) \cdot (x^y \cdot y) = ({}^x y) \cdot ({}^x y). $$

Thus $z = {}^x y$, so since $y$ is uniquely defined, $y \mapsto {}^x y$ has a well defined inverse.

($2 \Rightarrow 3$) Let $x \mapsto y \odot x$ denote the inverse of $x \mapsto {}^y x$. We can immediately see 3.8 is satisfied since $y \mapsto y^x$ is a bijection:
$$ x = y \odot {}^y x = y \odot (y^x \cdot x) = (x \cdot y^x) \odot (y^x \cdot x). $$
Now, 3.1 gives us $x^y = {}^x y \odot x$, so substituting $y = x \odot z$ ($z$ is unique since $z \mapsto x \odot z$ is a bijection) we have $x^{x \odot z} = {}^x (x \odot z) \odot x = z \odot x$. Thus 3.9 is also satisfied:
$$ x = (x \odot z) \cdot x^{x \odot z} = (x \odot z) \cdot (z \odot x). $$

($3 \Rightarrow 1$) We show that $x \mapsto x \cdot x$ is a bijection by constructing its inverse. Letting $y = x$ in 3.8 and 3.9 we have:
$$ (x \cdot x) \odot (x \cdot x) = x = (x \odot x) \cdot (x \odot x), $$
so $x \mapsto x \odot x$ is inverse to $x \mapsto x \cdot x$.
\end{proof}

\begin{corollary}
There is a bijective correspondence between non-degenerate cycle sets and non-degenerate unitary solutions to the QYBE.
\end{corollary}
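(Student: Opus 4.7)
The plan is to obtain the corollary by restricting the bijection of Theorem 3.2.2 to the non-degenerate subclasses on each side, using Proposition 3.2.4 as the dictionary that identifies them. Call $F$ the bijection from Theorem 3.2.2, which sends a cycle set $(X,\cdot)$ to the left non-degenerate unitary solution $R(x,y)=(x^y,\, x^y\cdot y)$. The whole argument fits into two short observations, and the key point is simply that ``non-degenerate'' on the cycle set side and ``non-degenerate'' on the solution side correspond under $F$ in both directions.

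First I would note that any non-degenerate solution to the QYBE is in particular left non-degenerate (the map $x \mapsto x^y$ being a bijection is half of the definition of non-degeneracy), so every non-degenerate unitary solution already lies in the codomain of $F$. Hence restricting $F$ to the class of non-degenerate cycle sets lands, a priori, inside the class of left non-degenerate unitary solutions, and it makes sense to ask whether the image is exactly the non-degenerate ones.

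Next I would invoke Proposition 3.2.4 in both directions. For the forward direction, if $(X,\cdot)$ is a non-degenerate cycle set, the implication $1 \Rightarrow 2$ of Proposition 3.2.4 says $F(X,\cdot)$ is a non-degenerate solution, so the restricted map is well-defined into the target class. For surjectivity, given any non-degenerate unitary solution $(X,R)$, Theorem 3.2.2 already writes it as $F(X,\cdot)$ for a unique cycle set $(X,\cdot)$, and then $2 \Rightarrow 1$ of Proposition 3.2.4 forces $(X,\cdot)$ to be non-degenerate. Injectivity of the restriction is inherited from injectivity of $F$.

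There is no real obstacle here: all of the substantive work has already been absorbed into Theorem 3.2.2 (which gives the bijection) and Proposition 3.2.4 (which matches the two notions of non-degeneracy). The only thing to be careful about is the logical point in the first paragraph, namely that a non-degenerate unitary solution automatically sits inside the codomain of $F$, so that restricting and co-restricting give the claimed bijection between non-degenerate cycle sets and non-degenerate unitary solutions to the QYBE.
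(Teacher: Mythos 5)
Your argument is correct and is exactly the (implicit) route the paper takes: the corollary is stated without proof as an immediate consequence of restricting the bijection of Theorem 3.2.2 via the equivalence of conditions 1 and 2 in the proposition characterising non-degeneracy. The only slip is a label: the ``dictionary'' you cite as Proposition 3.2.4 is in fact Proposition 3.2.6 in the paper's numbering (3.2.4 is the lemma on involutive YBE solutions), though the content you describe is unmistakably the right result.
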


We now see that the dual operation on a non-degenerate cycle set allows us to define notions of a dual cycle set and therefore of a dual solution to the QYBE.

\begin{proposition}[{\cite[Prop. 2]{ru1}}]
Let $(X,\cdot)$ be a non-degenerate cycle set with dual operation $\odot$. Then $(X, \odot)$ is a non-degenerate cycle set.
\end{proposition}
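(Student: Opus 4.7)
The plan is to reduce the question to the QYBE side of the correspondence established by Corollary 3.2.5, where inverting a solution is almost automatic. To unpack $(X,\odot)$ being a non-degenerate cycle set, we need three things: that $x\mapsto y\odot x$ is a bijection for every $y\in X$, the cycle set identity for $\odot$, and that $x\mapsto x\odot x$ is a bijection. The first and third are already given by the proof of Proposition 3.2.6: there $y\odot x$ was constructed as the inverse of $x\mapsto {}^y x$, and the implication ($3\Rightarrow 1$) shows $x\mapsto x\odot x$ is inverse to $x\mapsto x\cdot x$, which is a bijection by hypothesis. All the real work is in the cycle set identity for $\odot$, and I would prove it indirectly rather than by substitution gymnastics.

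Let $R$ be the non-degenerate unitary solution to the QYBE corresponding to $(X,\cdot)$ via Corollary 3.2.5, so $R(x,y)=(x^y,{}^x y)$. Because $R$ is unitary, $pR$ is an involution and hence $R$ itself is a bijection of $X^2$; solving $R(x,y)=(u,v)$ coordinate by coordinate yields $R^{-1}(u,v)=({}^v u,\,v^u)$. I would then check that $R^{-1}$ is again a non-degenerate unitary QYBE solution. Non-degeneracy is immediate because the two bijection conditions switch coordinates when $R$ is inverted; the QYBE equation is preserved by inverting both sides (using $(R_{ij})^{-1}=(R^{-1})_{ij}$); and unitarity of $R^{-1}$ is a one-line manipulation from the identity $R^{-1}=pRp$, itself a rewriting of $(pR)^2=\id$.

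With $R^{-1}$ in hand as a non-degenerate unitary QYBE solution, Corollary 3.2.5 produces a non-degenerate cycle set $(X,\cdot')$ corresponding to it. Under this correspondence, $x\mapsto y\cdot' x$ is defined as the inverse of $x\mapsto (R^{-1})_1(x,y)$. Since $(R^{-1})_1(x,y)={}^y x$, and the inverse of $x\mapsto {}^y x$ is by construction $x\mapsto y\odot x$, we get $\cdot'=\odot$. Thus $(X,\odot)$ inherits the non-degenerate cycle set structure directly from the corollary, without ever computing the identity $(x\odot y)\odot(x\odot z)=(y\odot x)\odot(y\odot z)$ by hand.

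The main obstacle is the bookkeeping in the final identification step: keeping the direction of the bijection from Corollary 3.2.5 straight, tracking which coordinate of $R^{-1}$ plays the role of the inverse $\sigma$-map in the induced cycle set, and not confusing ${}^y x$ (an expression built from $\cdot$) with $y\odot x$ (defined as the inverse of $\lambda_y:x\mapsto {}^y x$). Given the explicit form of $R^{-1}$ derived above and the groundwork already laid in Proposition 3.2.6, this is bookkeeping rather than a genuine difficulty.
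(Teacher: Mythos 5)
Your proof is correct, but it reaches the result by a genuinely different route from the paper's. The paper attacks the cycle-set identity for $\odot$ directly: it derives $(x\odot y')\odot(x\odot z')=(y'\odot x)\odot(y'\odot z')$ from identity 3.5 for the solution $(x^y,{}^xy)$ via a chain of invertible substitutions dual to those in the proof of Theorem 3.2.2, and then obtains non-degeneracy of $(X,\odot)$ from Proposition 3.2.6 because $\cdot$ is itself a dual to $\odot$ (the defining identities 3.8 and 3.9 are symmetric under exchanging the two operations). You instead work entirely on the QYBE side: unitarity is precisely the statement $R^{-1}=pRp$, inverting $R_{12}R_{13}R_{23}=R_{23}R_{13}R_{12}$ merely swaps its two sides, and the two non-degeneracy conditions for $R^{-1}(x,y)=({}^y x,\,y^x)$ are the two conditions for $R$ with their roles exchanged; Corollary 3.2.7 (the corollary you cite as 3.2.5) then hands you $(X,\odot)$ as a non-degenerate cycle set. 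Both arguments ultimately rest on the equivalences of Theorem 3.2.2, but you reuse that theorem as a black box where the paper reruns one of its computations in dualised form; your version is shorter and has the pleasant side effect of identifying the dual solution $R^{\odot}(x,y)=({}^y x,\,y^x)$ with $pRp=R^{-1}$, a fact the paper only introduces after this proposition and then exploits in Lemma 3.2.9. The one point you should make explicit is the identification of the abstract dual $\odot$ of Definition 3.2.5 with the inverse of $x\mapsto{}^y x$: this needs uniqueness of the dual, which does follow from 3.8 together with non-degeneracy (every pair $(u,v)$ arises uniquely as $(x\cdot y,\,y\cdot x)$), and since the paper makes the same silent identification this is shared bookkeeping rather than a gap in your approach.
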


\begin{proof}
Above we saw that ${}^x y \odot x = x^y$ and that $x \odot {}^x y = {}^x (x \odot y) = y$. We can derive that $\odot$ satisfies 3.6 from $(x^y, {}^x y)$ satisfying 3.5 (the calculation is dual to one from the proof of theorem 3.2.2, so it is given in less detail):
\begin{align*}
{}^{({}^x y)}({}^{(x^y)} z) = {}^x({}^y z) &\Rightarrow z = x^y \odot ({}^x y \odot {}^x ({}^y z))\\
&\Rightarrow y \odot (x \odot z') = x^y \odot ({}^x y \odot z')\\
&\Rightarrow y \odot (x \odot z') = ({}^x y \odot x) \odot ({}^x y \odot z')\\
&\Rightarrow (x \odot y') \odot (x \odot z') = (y' \odot x) \odot (y' \odot z').
\end{align*}

Thus $X^{\odot} := (X, \odot)$ is a cycle set, and it is non-degenerate since a dual to $\odot$ exists by definition.
\end{proof}

Since non-degenerate cycle sets correspond to unitary non-degenerate solutions, if $(X,R)$ is a solution where $R(x,y) = (x^y, {}^x y)$, then we can obtain a second solution $(X, R^{\odot})$ from $X^{\odot}$ where $R^{\odot}(x,y) = ({}^y x, y^x)$. We call this the \emph{dual solution} to $(X,R)$. By combining this with proposition 3.1.3, we discovered a lemma we could not find anywhere in the literature, which will make going back and forth between YBE and QYBE solutions even easier:

\begin{lemma}
Let $(X,r)$ be a non-degenerate, involutive solution to the YBE, $(Y,R)$ be a non-degenerate, unitary solution to the QYBE, and $p$ be the permutation $p(x,y) = (y,x)$. Then $(X,prp)$ and $(Y,Rp)$ are solutions to the YBE, and $(X,rp)$ and $(Y,pRp)$ are solutions to the QYBE.
\end{lemma}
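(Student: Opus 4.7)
The plan is to combine Proposition 3.1.3, which gives a bijection $R \leftrightarrow pR$ between non-degenerate unitary QYBE solutions and non-degenerate involutive YBE solutions, with Proposition 3.2.7, which associates to every non-degenerate unitary QYBE solution $R$ a dual solution $R^{\odot}$ of the same type. The key preliminary observation I need is that the dual solution is nothing other than $pRp$: writing $R(x,y) = (x^y, {}^x y)$, a direct computation gives
\[ pRp(x,y) = pR(y,x) = p(y^x, {}^y x) = ({}^y x, y^x) = R^{\odot}(x,y). \]
Once this is in hand, each of the four conclusions follows by a short chain of two applications of the above propositions, using only $p^2 = \id$ to collapse the composites.

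First I would handle the two conclusions starting from the YBE solution $(X,r)$. Applying Proposition 3.1.3 in the reverse direction to $(X,r)$ tells us that $(X, pr)$ is a non-degenerate unitary QYBE solution. Taking its dual via Proposition 3.2.7 (and the identification above) shows that $(X, p(pr)p) = (X, rp)$ is again a non-degenerate unitary QYBE solution, which is one of the claims. Now applying Proposition 3.1.3 in the forward direction to $(X, rp)$ produces the non-degenerate involutive YBE solution $(X, p(rp)) = (X, prp)$, giving the second claim.

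Symmetrically, for the conclusions starting from the QYBE solution $(Y,R)$, Proposition 3.2.7 immediately yields that $(Y, R^{\odot}) = (Y, pRp)$ is a non-degenerate unitary QYBE solution. Applying Proposition 3.1.3 forward to this gives that $(Y, p(pRp)) = (Y, Rp)$ is a non-degenerate involutive YBE solution. Together these cover all four statements, and non-degeneracy/involutivity/unitarity are preserved at every stage because both propositions explicitly preserve these properties.

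The only step with any real mathematical content is the identification $R^{\odot} = pRp$, and even this reduces to unpacking the definitions recorded in the paragraph preceding the lemma. I expect no genuine obstacle: everything else is careful bookkeeping of compositions with $p$, and the main risk is simply a sign-error-style slip in tracking which side $p$ appears on.
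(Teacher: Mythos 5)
Your proposal is correct and follows essentially the same route as the paper: both identify the dual solution $R^{\odot}$ with $pRp$ and then shuttle between YBE and QYBE solutions via Proposition 3.1.3 and the dual construction, using $p^2 = \id$ to simplify the composites. The only cosmetic difference is that you spell out the computation $pRp(x,y) = ({}^y x, y^x) = R^{\odot}(x,y)$ explicitly, which the paper leaves to the preceding discussion.
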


\begin{proof}
Firstly, $(Y,pRp) = (Y,R^{\odot})$ as given above, so by proposition 3.1.3 $(Y,p^2Rp)\\ = (Y,Rp)$ is a non-degenerate, involutive solution to the YBE. Then by inverting proposition 3.1.3, $(X,pr)$ is a non-degenerate, unitary solution to the QYBE, and its dual is $(X,p^2rp) = (X,rp)$, so by once again applying 3.1.3, $(X,prp)$ is a non-degenerate, involutive solution to the YBE. 
\end{proof}

Since it is defined in the same way as $R^{\odot}$, we will denote $r^{\odot} = prp$, and say that $(X,r^{\odot})$ is the dual solution to $(X,r)$. The existence of the dual solution is \cite[Lem. 4.5]{ds}. Although the we used the fact that a QYBE solution $(X,R)$ is associated with a YBE solution $(X,pR)$ to prove this lemma, it turns out to be much more useful to associate $(X,R)$ with $(X,Rp)$, as we will see in chapter 4.

\subsection{Cycle Groups, Retractions and Linearity}

Let $G(X)$ denote the subgroup of $Sym(X)$ generated by the image of the map $\sigma:X \to Sym(X)$, given by $x \mapsto \sigma_x$. When thinking of the cycle set $X$ as a solution to the QYBE, $G(X)$ is often referred to as the \emph{permutation group} of the solution.

\noindent
\emph{Remark:} For a non-degenerate solution $r(x,y) = (\lambda_x(y),\tau_y(x))$ (to the YBE, not QYBE), the permutation group is defined as the subgroup generated by the $\lambda_x$s, which in the case of an involutive solution is the same as the subgroup generated by the $\tau_x$s (since $\tau_x(y) = \lambda_{\lambda_y(x)}^{-1}(y)$ by lemma 2.1.4). The bijections given in 3.1.3 and 3.2.2 give us $\tau_x = \sigma_x^{-1}$, so these clearly generate the same subgroup as the $\sigma_x$s.

Define a map $+:G(X) \times X \to G(X)$ by
\begin{equation}
(\pi + x)(y) = \pi(x)\cdot\pi(y)
\end{equation}
for $\pi \in G(X)$, $x,y \in X$. Taking another $\rho \in G(X)$ and $z \in X$, then we have
\begin{align*}
(\pi\rho + x)(y) &= \pi(\rho(x))\cdot\pi(\rho(y))\\
&= (\pi + \rho(x))(\rho(y)),\\
\\
((\pi + x) + y)(z) &= (\pi + x)(y)\cdot(\pi + x)(z)\\
&= (\pi(x)\cdot \pi(y))\cdot(\pi(x) \cdot \pi(z))\\
&= (\pi(y)\cdot\pi(x))\cdot(\pi(y) \cdot \pi(z))\\
&= ((\pi + y) + x)(z).
\end{align*}

This motivates the following definition:

\begin{definition}
Let $G$ be a group acting on a set $X$. The pair $(G,X)$ together with a map $+:G \times X \to G$ is called a \emph{cycle group} if for all $x,y \in X$, $\pi, \rho \in G$ we have:
\begin{align}
\pi\rho + x &= (\pi + \rho(x))\rho,\\
(\pi + x) + y &= (\pi + y) + x.
\end{align}
When the underlying set is clear from the context, we abbreviate $(G,X)$ to $G$.
\end{definition}

The conversation above shows that a cycle set $X$ naturally defines a cycle group $G(X)$. The next proposition shows the converse:

\begin{lemma}[{\cite[Prop. 4]{ru1}}]
Let $(G,X)$ be a cycle group. The operation $\cdot: X^2 \to X$ defined by
\begin{equation}
x \cdot y = (1 + x)(y)
\end{equation}
makes $X$ into a cycle set (where $1 \in G$ is the identity element).
\end{lemma}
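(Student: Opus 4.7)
The plan is to verify directly the two axioms from Definition 3.2.1: bijectivity of $\sigma_x$, and the cycle set identity $(x\cdot y)\cdot(x\cdot z) = (y\cdot x)\cdot(y\cdot z)$.

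Bijectivity of $\sigma_x$ is immediate: by definition $\sigma_x(y) = x \cdot y = (1+x)(y)$, and since $1+x \in G$ and $G$ acts on $X$, the map $1+x$ is a permutation of $X$. This part is essentially a one-liner.

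The substantive step is the cycle set identity. The plan is to use axiom 3.11 to ``pull out'' the inner permutation, then axiom 3.12 to realise the desired symmetry. More precisely, I would first expand
\[(x\cdot y)\cdot(x\cdot z) = (1 + (1+x)(y))\bigl((1+x)(z)\bigr),\]
and look for a way to rewrite $1 + (1+x)(y)$. Setting $\pi = 1$ and $\rho = 1+x$ in 3.11 (with $y$ playing the role of the ``$x$'' in the axiom) gives
\[(1+x) + y \;=\; 1\cdot(1+x) + y \;=\; \bigl(1 + (1+x)(y)\bigr)(1+x),\]
so $1 + (1+x)(y) = ((1+x)+y)(1+x)^{-1}$. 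Substituting this back and cancelling the $(1+x)^{-1}(1+x)$ yields
\[(x\cdot y)\cdot(x\cdot z) = \bigl((1+x)+y\bigr)(z).\]
Running the same reduction with $x$ and $y$ exchanged gives $(y\cdot x)\cdot(y\cdot z) = ((1+y)+x)(z)$. Axiom 3.12 applied with $\pi = 1$ tells us $(1+x)+y = (1+y)+x$, so the two expressions agree and the identity holds.

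The only conceptual obstacle is spotting the right specialisation of 3.11 that lets $(1+x)$ telescope out of the outer position, i.e.\ recognising that 3.11 is essentially a ``conjugation'' rule that lets the acting permutation be absorbed into the $+$ operation. Once that is identified, everything is mechanical, and no non-degeneracy hypothesis is used, so the lemma holds for arbitrary cycle groups.
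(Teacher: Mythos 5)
Your proposal is correct and follows essentially the same route as the paper: bijectivity of $\sigma_x$ is immediate from the group action, and the cycle set identity is obtained by specialising 3.11 with $\pi=1$, $\rho=1+x$ to rewrite $(1+(1+x)(y))((1+x)(z))$ as $((1+x)+y)(z)$, then invoking 3.12. The only cosmetic difference is that you solve for $1+(1+x)(y)$ and cancel $(1+x)^{-1}(1+x)$, whereas the paper applies 3.11 directly to the composite expression; the content is identical.
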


\begin{proof}
Let $x,y,z \in X$:
\begin{align*}
(x \cdot y) \cdot (x \cdot z) &= (1 + x)(y) \cdot (1 + x)(z)\\
&= (1 + (1 + x)(y))((1 + x)(z))\\
&= (1(1+x) + y)(z)\\
&= (1(1+y) + x)(z)\\
&= (y\cdot x)\cdot(y\cdot z).
\end{align*}
\end{proof}

Recall that a set with a binary operation is called a \emph{monoid} if the operation is associative and has an identity element. Let $\N^{(X)}$ denote the \emph{free commutative monoid} generated by the set $X$: the operation will be denoted by $+$ and the identity by 0, so elements of $\N^{(X)}$ are finite linear combinations $\sum n_x x$ for $n_x \in \N$, where $n x = \sum_{i=1}^n x$.

\begin{proposition}[{\cite[Prop. 5]{ru1}}]
Every cycle group $(G,X)$ naturally extends to a cycle group $(G,\N^{(X)})$.
\end{proposition}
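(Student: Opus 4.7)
The plan is to extend both the $G$-action and the map $+$ from $X$ to $\N^{(X)}$, and then verify the two cycle group axioms (3.11) and (3.12) for the extensions. Since $G$ acts on $X$ by bijections, this action extends uniquely to $\N^{(X)}$ as a commutative-monoid automorphism by declaring $\pi(\sum n_x x) := \sum n_x \pi(x)$; in particular $\pi(0)=0$ and $\pi$ is additive on $\N^{(X)}$.

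The heart of the proof is extending $+\colon G \times X \to G$ to $+\colon G \times \N^{(X)} \to G$ by the recursion $\pi + 0 := \pi$ and $\pi + (\alpha + x) := (\pi + \alpha) + x$ for $x \in X$. The main obstacle is well-definedness: if the same element of $\N^{(X)}$ is written both as $\alpha + x$ and as $\beta + y$ with $x \neq y$, then cancellation in the free commutative monoid forces $\alpha = \gamma + y$ and $\beta = \gamma + x$ for a common $\gamma$, and the required equality $((\pi+\gamma)+y)+x = ((\pi+\gamma)+x)+y$ is a single instance of axiom (3.12) applied to the group element $\pi+\gamma$. A straightforward induction on the total degree $|\alpha| := \sum n_x$ then makes the recursion unambiguous for all $\alpha$.

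Once well-definedness is in hand, I would prove the extended axiom (3.12), $(\pi+\alpha)+\beta = (\pi+\beta)+\alpha$, in two stages: first the special case $(\pi+\alpha)+y = (\pi+y)+\alpha$ for $y \in X$ by induction on $|\alpha|$ (the inductive step writes $\alpha = \alpha_0 + x$, commutes $x$ and $y$ via the original (3.12), and applies the inductive hypothesis to $\alpha_0$); then the general statement by an induction on $|\beta|$ that peels off one element of $X$ at a time and invokes the special case. For the extended axiom (3.11), $\pi\rho + \alpha = (\pi + \rho(\alpha))\rho$, a single induction on $|\alpha|$ suffices: the base case $\alpha = 0$ uses $\rho(0) = 0$, and the step $\alpha = \alpha_0 + x$ unfolds the definition, applies the inductive hypothesis to $\alpha_0$, uses the original (3.11) at the element $x$, and concludes by the additivity $\rho(\alpha) = \rho(\alpha_0) + \rho(x)$. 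After the well-definedness step, these verifications are essentially routine bookkeeping.
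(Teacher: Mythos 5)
Your proposal is correct and follows essentially the same route as the paper: the same additive extension of the $G$-action to $\N^{(X)}$, the same recursive extension of $+$ via $\pi + 0 = \pi$ and $\pi + (\alpha + x) = (\pi + \alpha) + x$, and the same induction on the length $\sum n_x$ to verify axiom 3.11. The one place you go beyond the paper is your explicit well-definedness check for the recursion (cancellation in the free commutative monoid reduces any two decompositions to a single application of the original axiom 3.12 at the group element $\pi + \gamma$); the paper compresses this into the remark that 3.12 ``follows immediately from 3.15 and the fact that $+$ is commutative,'' whereas your version makes precise exactly where the original axiom 3.12 is needed.
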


\begin{proof}
We extend the $G$-action to $\N^{(X)}$ by:
\begin{equation}
\pi\left(\sum n_x x\right) = \sum n_x \pi(x),
\end{equation}
and we extend the $+$ map by:
\begin{equation}
\pi + 0 = \pi, \quad \pi + (a + b) = (\pi + a) + b.
\end{equation}

3.12 follows immediately from 3.15 and the fact that $+$ is commutative. We prove 3.11 by induction on what we will call the \emph{length} of an element of $\N^{(X)}$, which we define as $\sum n_x$. The length 0 case is trivial, so suppose 3.11 holds for elements with length $\leq n$. Any element of length $n+1$ can be written as $y+x$ for $x \in X$ and $y \in \N^{(X)}$ where $y$ has length $n$. Thus we have:
\begin{align*}
\pi\rho + (y+x) &= (\pi\rho + y) + x\\
&= (\pi + \rho(y))\rho + x\\
&= ((\pi + \rho(y)) + \rho(x))\rho\\
&= (\pi + (\rho(y) + \rho(x))) \rho\\
&= (\pi + \rho(y + x))\rho.
\end{align*}
\end{proof}

In particular this means that any cycle set $X$ can be naturally extended to $\N^{(X)}$ such that $G(X) = G(\N^{(X)})$. The question which now arises is when can we extend a cycle set in the same way to the free abelian group $\Z^{(X)}$. 

\begin{theorem}[{\cite[Prop. 6]{ru1}}]
A cycle group $(G,X)$ admits an extension to $(G, \Z^{(X)})$ if and only if the underlying cycle set $X$ is non-degenerate.
\end{theorem}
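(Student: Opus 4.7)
My plan is to split the biconditional into two implications; the reverse direction is short and the forward direction is constructive.

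\textbf{Reverse ($\Z^{(X)}$-extension exists $\Rightarrow$ $X$ non-degenerate):} If an extension exists, then the axiom $(\pi + z) + (-z) = \pi + 0 = \pi$ forces $\pi \mapsto \pi + z$ and $\pi \mapsto \pi - z$ to be mutually inverse bijections of $G$ for each $z \in X$. Fix $z \in X$ and set $\pi^* := 1 - z$. Unpacking $\pi^* + z = 1$ via $(\pi + x)(y) = \pi(x) \cdot \pi(y)$ gives $\sigma_{\pi^*(z)} \pi^* = \id$, so $\pi^* = \sigma_a^{-1}$ for $a := \pi^*(z)$, and $a = \sigma_a^{-1}(z)$ translates to $a \cdot a = z$. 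Thus squaring is surjective. For injectivity, if $a \cdot a = b \cdot b = z$ then both $\sigma_a^{-1}$ and $\sigma_b^{-1}$ lie in the fibre above $1$ of $\pi \mapsto \pi + z$, so they coincide; hence $\sigma_a = \sigma_b$, and applying this to $b$ gives $a \cdot b = b \cdot b = a \cdot a$, whence the bijectivity of $\sigma_a$ forces $a = b$.

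\textbf{Forward ($X$ non-degenerate $\Rightarrow$ extension exists):} Let $q : X \to X$ be the inverse of squaring, and define $\pi - x := \sigma_{q(\pi(x))}^{-1} \pi$. A direct computation using $q(v) \cdot q(v) = v$ verifies that $f_x : \pi \mapsto \pi + x$ and $g_x : \pi \mapsto \pi - x$ are mutual inverses in $\text{Sym}(G)$. Axiom 3.12 on $X$ reads $f_x f_y = f_y f_x$, and passing to inverses preserves commutativity, so the collection $\{f_x, g_x : x \in X\}$ generates an abelian subgroup of $\text{Sym}(G)$. This yields a well-defined group homomorphism $\Z^{(X)} \to \text{Sym}(G)$ with $x \mapsto f_x$, which is exactly the data of an extended operation $+ : G \times \Z^{(X)} \to G$ refining the $\N^{(X)}$-action from Proposition 3.3.3.

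To finish, verify axioms 3.11 and 3.12 on $\Z^{(X)}$. Axiom 3.12 is immediate from the commutativity observation above. For 3.11, by induction on the word length of $z \in \Z^{(X)}$, it suffices to check the generating case $z = -x$: the explicit formulas give $\pi \rho - x = \sigma_{q(\pi \rho(x))}^{-1} \pi \rho$ and $(\pi - \rho(x)) \rho = \sigma_{q(\pi(\rho(x)))}^{-1} \pi \rho$, which coincide. The main obstacle, I anticipate, is the injectivity step in the reverse direction: one is tempted to hope that $\sigma_a = \sigma_b$ alone forces $a = b$, but this fails for arbitrary cycle sets, and one must combine $\sigma_a = \sigma_b$ with the extra hypothesis $a^2 = b^2$ and the bijectivity of $\sigma_a$ to conclude.
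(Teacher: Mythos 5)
Your proof is correct and follows essentially the same route as the paper: your $\pi - x = \sigma_{q(\pi(x))}^{-1}\pi$ is exactly the paper's $(\pi - x)(y) = \pi(y)^{\pi(x)\odot\pi(x)}$ (since $\pi(x)\odot\pi(x) = q(\pi(x))$), and your square root $(1-z)(z)$ of $z$ is the same witness the paper uses for non-degeneracy. Your explicit injectivity argument for $x \mapsto x\cdot x$ (via uniqueness of the fibre of $\pi \mapsto \pi + z$ over $1$) is a welcome refinement, since the paper's proof only exhibits a right inverse of squaring before asserting bijectivity.
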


\begin{proof}
Suppose an extension exists. Then we have:
\begin{align*}
x &= (1 + (-x + x))(x)\\
&= (1(1 - x) + x)(x)\\
&= (1 + (1 - x)(x))((1 - x)(x))\\
&= (1-x)(x)\cdot (1-x)(x),
\end{align*}
so since $1-x \in G$, $(1-x)(x) \in X$, meaning $x\mapsto (1-x)(x)$ is inverse to $x \mapsto x\cdot x$ and therefore $X$ is non-degenerate.

Conversely suppose $(X,\cdot)$ is non-degenerate and let $\odot$ be the dual operation. We extend the $G$-action by 3.14, and the $+$ map by 3.15 in addition to:
\begin{equation}
(\pi - x)(y) = \pi(y)^{\pi(x)\odot\pi(x)}.
\end{equation}

First we need to show that $(\pi - x) + x = \pi = (\pi + x) - x$ so that 3.10 and 3.16 are mutually inverse and therefore 3.15 holds when $a \in \Z^{(X)}$ and $b \in \pm X$. Using 3.13 and 3.8 we have:
\begin{align*}
((\pi + x) - x)(y) &= (\pi + x)(y)^{(\pi + x)(x)\odot (\pi + x)(x)}\\
&= (\pi(x)\cdot\pi(y))^{(\pi(x)\cdot\pi(x))\odot(\pi(x)\cdot\pi(x))}\\
&= (\pi(x)\cdot\pi(y))^{\pi(x)}\\
&= \pi(y),
\end{align*}
and 3.9 gives us $\pi(x) = (\pi(x) \odot \pi(x))\cdot(\pi(x) \odot \pi(x))$, so:
\begin{align*}
(\pi - x)(x) &= \pi(x)^{\pi(x)\odot\pi(x)}\\
&= ((\pi(x) \odot \pi(x))\cdot(\pi(x) \odot \pi(x)))^{\pi(x)\odot\pi(x)}\\
&= \pi(x) \odot \pi(x).
\end{align*}
Hence:
\begin{align*}
((\pi - x) + x)(y) &= (\pi - x)(x)\cdot(\pi - x)(y)\\
&= (\pi(x) \odot \pi(x))\cdot \pi(y)^{\pi(x)\odot\pi(x)}\\
&= \pi(y).
\end{align*}

From this it follows that $\pi + t$ is well defined for $t \in \Z^{(X)}$, since $t$ is a sum of positive and negative elements of $X$, so $\pi + t$ can be obtained by sequentially adding (3.10) and subtracting (3.16) elements of $X$. Thus since 3.15 holds, it follows that 3.12 also holds.

We can extend the inductive argument in the proof of proposition 3.3.3 to obtain 3.11. Elements of $\Z^{(X)}$ are finite linear combinations $\sum n_x x$ for $n_x \in \Z$, so we redefine length to be $\sum |n_x|$. Again the length 0 case is trivial, so suppose 3.11 holds for elements of $\Z^{(X)}$ of length $\leq n$. Any element of length $n+1$ can be written as $y + x$ for $x,y \in \Z^{(X)}$ where $x$ has length 1 (i.e. $x = \pm x'$ for $x' \in X$) and $y$ has length $n$. Since we have already shown 3.12 holds, the calculation in the proof of 3.3.3 remains valid here, so by induction 3.11 holds on all of $\Z^{(X)}$.
\end{proof}

Lemma 3.3.2 says that extending $(G,X)$ to $(G,\Z^{(X)})$ means that $\Z^{(X)}$ is a cycle set, which we will call the \emph{linear extension} of $X$. This has certain properties which lead to the definition of an important class of cycle sets with a compatible abelian group structure. 

\begin{definition}
A cycle set $A$ is called a \emph{linear cycle set}, iff it is an abelian group such that for all $a,b,c \in A$:
\begin{align}
a \cdot (b+c) &= a \cdot b + a \cdot c,\\
(a+b)\cdot c &= (a \cdot b)\cdot(a\cdot c).
\end{align}
\end{definition}

We verify that the linear extension of a cycle set satisfies this definition:

\begin{proposition}
The linear extension $\Z^{(X)}$ of a non-degenerate cycle set $X$ is a linear cycle set.
\end{proposition}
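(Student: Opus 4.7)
The plan is to exploit the fact that Theorem 3.3.5 has already done all the structural work: it produced the cycle group $(G, \Z^{(X)})$ extending $(G,X)$, and so by Lemma 3.3.2 the formula $a \cdot b = (1+a)(b)$ automatically makes $\Z^{(X)}$ into a cycle set. Since $\Z^{(X)}$ is by construction an abelian group, only the two compatibility axioms 3.17 and 3.18 of a linear cycle set remain to be verified.

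For 3.17, I would simply unfold the definition and use the fact that the $G$-action on $\Z^{(X)}$ was defined in 3.14 to be $\Z$-linear:
\[ a \cdot (b+c) = (1+a)(b+c) = (1+a)(b) + (1+a)(c) = a\cdot b + a \cdot c. \]
This is essentially immediate from the setup.

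The substance sits in 3.18. The key step is to apply the cycle-group axiom 3.11 with $\pi = 1$, $\rho = 1+a$, and $x = b \in \Z^{(X)}$; this application is legitimate because the proof of Theorem 3.3.5 explicitly extended 3.11 to hold for arbitrary $x \in \Z^{(X)}$, not merely $x \in X$. The result is the identity
\[ (1+a) + b = \bigl(1 + (1+a)(b)\bigr)(1+a) = (1 + a\cdot b)(1+a) \]
in $G$. Combining with 3.15, which gives $1 + (a+b) = (1+a) + b$, and then evaluating both sides at $c$, one gets
\[ (a+b)\cdot c = \bigl(1 + (a+b)\bigr)(c) = (1 + a\cdot b)\bigl((1+a)(c)\bigr) = (a\cdot b)\cdot(a\cdot c), \]
which is exactly 3.18.

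The main obstacle is essentially absent: the nontrivial combinatorics all sit inside Theorem 3.3.5 (and the underlying use of the dual operation $\odot$). The only real pitfall is keeping the two meanings of ``$+$'' straight — the abelian-group addition on $\Z^{(X)}$ versus the cycle-group map $+\colon G \times \Z^{(X)} \to G$ — and checking that 3.11 is available in its extended form, both of which are transparent once one has the setup of 3.3.5 in hand.
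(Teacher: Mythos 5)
Your proposal is correct and follows essentially the same route as the paper's own proof: 3.17 from the $\Z$-linearity of the extended action (3.14), and 3.18 by combining 3.15 with the cycle-group axiom 3.11 applied with $\pi = 1$, $\rho = 1+a$, $x = b$, then evaluating at $c$. Your explicit remark that 3.11 must hold in its extended form on all of $\Z^{(X)}$ (as established in Theorem 3.3.5) is the one point the paper leaves implicit, and it is the right thing to flag.
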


\begin{proof}
3.17 simply follows from 3.14:
\begin{align*}
a \cdot (b + c) &= \sigma_a(b+c)\\
&= \sigma_a(b) + \sigma_a(c)\\
&= a \cdot b + a \cdot c,
\end{align*}
and then 3.18 follows from 3.11, 3.13 and 3.15:
\begin{align*}
(a + b) \cdot c &= (1 + (a + b))(c)\\
&= ((1+a) + b)(c)\\
&= (1 + (1+a)(b))((1+a)(c))\\
&= (1 + a\cdot b)(a \cdot c)\\
&= (a \cdot b) \cdot (a \cdot c).
\end{align*}
\end{proof}

Here are some useful properties of linear cycle sets which will make our calculations easier:

\begin{lemma}
Let $A$ be a linear cycle set, and $a,b \in A$. Then we have the following:
\begin{equation}
a \cdot(-b) = -(a\cdot b),
\end{equation}
\begin{equation}
a \cdot 0 = 0, \quad 0 \cdot a = a.
\end{equation}
\end{lemma}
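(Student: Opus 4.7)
The key observation is that axiom 3.17 says precisely that $\sigma_a: A \to A$ is an endomorphism of the additive group $(A,+)$ for each $a \in A$. This immediately yields both $a \cdot 0 = 0$ and $a \cdot(-b) = -(a \cdot b)$: for the first, set $b=c=0$ in 3.17 to get $a \cdot 0 = a \cdot 0 + a \cdot 0$ and cancel; for the second, set $c = -b$ in 3.17 and use what was just proved to get $0 = a \cdot 0 = a \cdot b + a \cdot (-b)$, so $a \cdot (-b) = -(a \cdot b)$.

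For the remaining identity $0 \cdot a = a$, my plan is to apply 3.18 with $b = 0$, which gives
\[ a \cdot c = (a+0)\cdot c = (a\cdot 0)\cdot(a\cdot c) = 0 \cdot (a \cdot c), \]
where the last step uses the already-established $a \cdot 0 = 0$. Since $\sigma_a$ is a bijection on $A$ by the cycle set axioms, as $c$ ranges over $A$ the element $a \cdot c$ ranges over all of $A$; hence $d = 0 \cdot d$ for every $d \in A$, which is the claim. (One could equally well specialise to $a=0$ from the outset and use only that $\sigma_0$ is a bijection.)

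The only mildly subtle point, and the part that I expect could trip up a reader, is recognising that 3.17 alone constrains only the right-hand argument of $\cdot$: it tells us nothing directly about how $\cdot$ behaves when $0$ sits on the left. So the identity $0 \cdot a = a$ genuinely needs 3.18 together with the bijectivity of some $\sigma_a$, rather than following from 3.17 by a symmetric argument. Everything else is a one-line manipulation.
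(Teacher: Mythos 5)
Your proof is correct and follows essentially the same route as the paper: identities (3.19) and the first half of (3.20) come from 3.17 alone, and $0 \cdot a = a$ comes from 3.18 combined with bijectivity of a left multiplication. The only difference is cosmetic --- you establish $a \cdot 0 = 0$ first and deduce $a\cdot(-b) = -(a\cdot b)$ from it in one line, whereas the paper proves the negation identity first via the substitution $c = -2b$; your ordering is, if anything, slightly cleaner.
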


\begin{proof}
For (3.19), first substitute $c = -2b$ into 3.17 to get $a \cdot (-b) = a \cdot b + a \cdot (-2b)$, then applying 3.17 again to $a \cdot (-2b) = a \cdot(-b - b)$ we get $a \cdot (-b) = a\cdot b + a \cdot(-b) + a\cdot(-b)$, so subtracting $a\cdot b + a \cdot(-b)$ from both sides we get $-(a\cdot b) = a \cdot (-b)$.

For (3.20), substituting $c = -b$ into 3.17 we have $a \cdot 0 = a\cdot b + a\cdot (-b) = a\cdot b - a\cdot b = 0$. Now substituting $a = b = 0$ into 3.18 we have $0 \cdot c = (0\cdot 0)\cdot(0 \cdot c) = 0 \cdot (0\cdot c)$, so since the map $c \mapsto 0\cdot c$ is a bijection we have $c = 0\cdot c$.
\end{proof}

\begin{proposition}[{\cite[Prop. 9]{ru1}}]
Let $A$ be a linear cycle set. Then $A$ is non-degenerate and the map $\sigma:A \to G(A)$ is a surjection.
\end{proposition}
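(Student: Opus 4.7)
The plan is to pivot the entire argument on a single reformulation of axiom (3.18): read as an equality of bijections in $c$, $(a+b)\cdot c=(a\cdot b)\cdot(a\cdot c)$ says exactly that $\sigma_{a+b}=\sigma_{a\cdot b}\,\sigma_a$. Specialising this to $b=-a$ and using $a\cdot(-a)=-(a\cdot a)$ from (3.19) together with $0\cdot c=c$ from (3.20), the left hand side collapses to $\id$, which yields $\sigma_a^{-1}=\sigma_{-(a\cdot a)}$. These two identities, the composition rule and the inversion formula, will carry the whole proof.

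For non-degeneracy I would propose that the squaring map $q\colon a\mapsto a\cdot a$ has inverse $f\colon z\mapsto(-z)\cdot z$. One direction is immediate from the inversion formula: $f(q(a))=\sigma_{-(a\cdot a)}(a\cdot a)=\sigma_a^{-1}\sigma_a(a)=a$. For the other, applying (3.18) again with the substitution $a\leftarrow -z$, $b\leftarrow z$, $c\leftarrow z$ gives $z=0\cdot z=((-z)\cdot z)\cdot((-z)\cdot z)=q(f(z))$. Hence $q$ is a bijection and $A$ is non-degenerate.

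For surjectivity of $\sigma\colon A\to G(A)$ it suffices to show that the image $\sigma(A)\subseteq Sym(A)$ is already a subgroup, since $G(A)$ is by definition the subgroup it generates. We have $\sigma_0=\id\in\sigma(A)$ by (3.20), closure under inversion is exactly the inversion formula above, and for closure under composition I set $b:=\sigma_v^{-1}(u)$ so that $v\cdot b=u$; the composition rule then gives $\sigma_u\sigma_v=\sigma_{v\cdot b}\,\sigma_v=\sigma_{v+b}$, which still lies in $\sigma(A)$. Therefore $\sigma(A)=G(A)$.

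The only real obstacle is spotting that $f(z)=(-z)\cdot z$ is the correct candidate for $q^{-1}$; once it is guessed, both verifications are single applications of (3.18), and the surjectivity argument is just the observation that the composition rule and the inversion formula close $\sigma(A)$ under the group operations of $Sym(A)$.
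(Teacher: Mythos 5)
Your proof is correct and follows essentially the same route as the paper's: the same inversion formula $\sigma_a^{-1}=\sigma_{-(a\cdot a)}$ (the paper derives it from the cycle-group identity $\sigma_{a+b}=\sigma_{a\cdot b}\,\sigma_a$, which is exactly your pointwise reading of (3.18)) and the same candidate inverse $z\mapsto(-z)\cdot z$ for the squaring map. If anything yours is the more complete write-up: the paper only verifies the composite $((-z)\cdot z)\cdot((-z)\cdot z)=z$ before declaring the two maps mutually inverse, and for surjectivity it only checks that $\sigma(A)$ contains inverses of its elements, whereas you also verify the other composite and the closure of $\sigma(A)$ under composition, which is needed to conclude $\sigma(A)=G(A)$.
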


\begin{proof}
By 3.18 and 3.20 we have
\begin{align*}
a &= 0 \cdot a\\
&= (-a + a) \cdot a\\
&= ((-a)\cdot a)\cdot((-a)\cdot a),
\end{align*}
meaning that $a \mapsto (-a) \cdot a$ is inverse to $a \mapsto a \cdot a$, therefore $A$ is non-degenerate.

Since $G(A)$ is the group generated by $\sigma(A)$, for surjectivity of $\sigma$ we need to check that $\sigma_a^{-1} \in \sigma(A)$ for all $a \in A$. By 3.11 we have:
\begin{equation}
 1 + a + b = 1(1+a) + b = (1 + (1+a)(b))(1+a) = (1 + a \cdot b)(1+a). \tag{*}
\end{equation}

Thus if we let $b = -a$ then 3.19 tells us that:
\[ (1 - a\cdot a)(1+a) = 1, \]
so $\sigma_{-a\cdot a} = \sigma_a^{-1}$.
\end{proof}

Recall that given any map between sets $f:X \to Y$, we can define an equivalence relation on $X$ by $x \sim y \iff f(x) = f(y)$ for $x,y \in X$. If we have a binary operation $\cdot$ on $X$, then a \emph{congruence relation} on $X$ is an equivalence relation $\sim$ such that:
\[ x \sim x', \ y \sim y' \Rightarrow x \cdot y = x' \cdot y'. \]

\begin{lemma}[{\cite[Lem. 2]{ru1}}]
Let $X$ be a cycle set. Then $x \sim y \iff \sigma_x = \sigma_y$ is a congruence relation on $X$.
\end{lemma}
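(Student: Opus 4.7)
The plan is to observe first that $\sim$ is an equivalence relation for free --- it is the kernel of the map $\sigma : X \to G(X)$, $x \mapsto \sigma_x$, so reflexivity, symmetry, and transitivity are automatic. The content of the lemma is therefore the congruence property: assuming $\sigma_x = \sigma_{x'}$ and $\sigma_y = \sigma_{y'}$, I need to show that $\sigma_{x \cdot y} = \sigma_{x' \cdot y'}$.

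My first instinct is to attack this directly using only the cycle set axiom 3.6. Given any $w \in X$, write $w = x \cdot z = x' \cdot z$ using $\sigma_x = \sigma_{x'}$, and apply 3.6 to rewrite both $\sigma_{x\cdot y}(w)$ and $\sigma_{x'\cdot y'}(w)$. This reduces the problem to an identity of the form $\sigma_{y\cdot x}(v) = \sigma_{y'\cdot x'}(v)$ for all $v$, i.e.\ a congruence statement of exactly the same shape with the roles of $x$ and $y$ swapped. So the purely axiomatic approach loops back on itself, and I would abandon it.

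Instead, I would exploit the cycle group structure built up in the preceding lemmas. Setting $\pi = 1$ and $\rho = 1 + x = \sigma_x$ in the cycle-group identity 3.11, and using 3.13, gives the key formula
\[ \sigma_x + y \;=\; \sigma_{x \cdot y}\,\sigma_x. \]
The crucial observation is that the value $\sigma_x + y$ is computed by the operation ${+} : G(X) \times X \to G(X)$, so it depends on $x$ only through $\sigma_x$, not through the chosen representative. Hence $\sigma_x = \sigma_{x'}$ lets me freely replace $x$ by $x'$ on the left, and similarly for $y$. Combining this with 3.12, which specialises to $\sigma_x + y = \sigma_y + x$, I would form the chain
\[ \sigma_x + y \;=\; \sigma_{x'} + y \;=\; \sigma_y + x' \;=\; \sigma_{y'} + x' \;=\; \sigma_{x'} + y', \]
applying ``well-definedness of ${+}$ on $G(X) \times X$'' and 3.12 alternately. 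Reading off the key formula for both $(x,y)$ and $(x',y')$ then yields $\sigma_{x \cdot y}\,\sigma_x = \sigma_{x' \cdot y'}\,\sigma_{x'}$, and cancelling $\sigma_x = \sigma_{x'}$ on the right gives the desired $\sigma_{x \cdot y} = \sigma_{x' \cdot y'}$.

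The main obstacle, as sketched above, is noticing that the obvious direct calculation from 3.6 is doomed. The proof only becomes transparent once one sees that the cycle-group addition is exactly the bookkeeping device which decouples the $x$-dependence from the $y$-dependence, because it factors through $\sigma$. After that insight, the whole argument reduces to applying 3.11, 3.12 and 3.13 in sequence.
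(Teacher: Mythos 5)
Your proof is correct: the identities 3.11 and 3.12 for the cycle group $(G(X),X)$ are established in the paper before this lemma, the key formula $\sigma_x + y = \sigma_{x\cdot y}\sigma_x$ follows from 3.11 with $\pi = 1$, $\rho = \sigma_x$ exactly as you say, the observation that $\sigma_x + y$ depends on $x$ only through the permutation $\sigma_x$ is the right well-definedness point, and right-cancellation of $\sigma_x = \sigma_{x'}$ in $Sym(X)$ finishes the argument. However, your diagnosis that the direct attack via 3.6 is ``doomed'' is mistaken: the paper's proof is precisely that direct computation. The loop you ran into is broken by doing the two substitutions one at a time rather than simultaneously: writing $(x\cdot y)\cdot z = (x\cdot y)\cdot(x\cdot z^x) = (y\cdot x)\cdot(y\cdot z^x)$, the element $y$ now occurs only as a left multiplier, so $\sigma_y = \sigma_{y'}$ lets you replace it by $y'$; a second application of 3.6 gives $(x\cdot y')\cdot(x\cdot z^x)$, where $x$ occurs only as a left multiplier (and $x\cdot z^x = z$), so $\sigma_x = \sigma_{x'}$ yields $(x'\cdot y')\cdot z$. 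In fact your chain $\sigma_x + y = \sigma_{x'} + y = \sigma_y + x' = \sigma_{y'} + x' = \sigma_{x'} + y'$ is exactly this alternation of ``apply 3.6'' and ``swap a representative in left-multiplier position,'' transcribed into cycle-group notation; unwound, the two proofs are the same computation. What your packaging buys is that the bookkeeping is automatic --- the well-definedness of $+$ on $G(X)\times X$ replaces the need to notice which occurrences of $x$ and $y$ are ``safe'' to substitute --- at the cost of invoking the cycle-group machinery where the paper needs only the single axiom 3.6 and the definition of $z^x$.
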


\begin{proof}
Let $x,x',y,y' \in X$ be such that $\sigma_x = \sigma_{x'}$ and $\sigma_y = \sigma_{y'}$. Since this means that $x \cdot z = x' \cdot z$ and $y \cdot z = y' \cdot z$ for all $z \in X$, we have:
\begin{align*}
(x \cdot y) \cdot z &= (x \cdot y)\cdot(x \cdot z^x)\\
&= (y \cdot x)\cdot(y \cdot z^x)\\
&= (y' \cdot x)\cdot(y' \cdot z^x)\\
&= (x \cdot y')\cdot(x \cdot z^x)\\
&= (x' \cdot y') \cdot z,
\end{align*}
meaning $\sigma_{x \cdot y} = \sigma_{x' \cdot y'}$.
\end{proof}

A consequence of this lemma is that for any cycle set $X$ we can define a binary operation $\cdot$ on $\sigma(X) \subseteq G(X)$ by:
\begin{equation}
\sigma_x \cdot \sigma_y = \sigma_{x \cdot y}.
\end{equation}
If $\sigma(X)$ is a cycle set, we can induce a binary operation on $\sigma^2(X) = \sigma(\sigma(X)) \subseteq G(\sigma(X))$, and so on.

\noindent
\emph{Remark:} A map between cycle sets $f:X \to Y$ satisfying 3.21 is called a \emph{morphism} of cycle sets, and if it is also a bijection we call it an \emph{isomorphism} and write $X \cong Y$.

\begin{definition}
Let $X$ be a cycle set. We call $\sigma(X)$ the \emph{retraction} of $X$, and $\sigma^n(X)$ (if it exists) the \emph{$n$th retraction} of $X$. If there exists $m$ such that $\sigma^m(X) = 1$ (the trivial group), we call $X$ \emph{fully retractable}. If $|X| > 1$ and $\sigma:X \mapsto G(X)$ is injective, we say that $X$ is \emph{irretractable} and we have $X \cong \sigma(X)$.
\end{definition}

By the remark above, whenever $\sigma(X)$ is a cycle set we have a natural surjective morphism $\sigma:X \to \sigma(X)$. We now give two results, the proofs of which are slightly beyond our scope, but which are both very useful:

\begin{proposition}[{\cite[Prop. 10]{ru1}}]
A cycle set $X$ is non-degenerate if and only if its retraction $\sigma(X)$ is a non-degenerate cycle set.
\end{proposition}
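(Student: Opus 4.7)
The plan is to track how the squaring map $s: x \mapsto x \cdot x$ on $X$ interacts with the surjection $\sigma: X \to \sigma(X)$, and to use the dual operation characterisation of non-degeneracy (Proposition 3.2.6) to transfer non-degeneracy between $X$ and $\sigma(X)$.

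The first step is to verify that $s$ descends to a well-defined map $\bar{s}: \sigma(X) \to \sigma(X)$ via $\bar{s}(\sigma_x) = \sigma_{x\cdot x}$. Supposing $\sigma_x = \sigma_y$, evaluating at $z = x$ and $z = y$ gives $x\cdot x = y\cdot x$ and $x\cdot y = y\cdot y$ directly. Moreover, substituting $x\cdot z = y\cdot z$ into the cycle axiom $(x\cdot y)\cdot(x\cdot z) = (y\cdot x)\cdot(y\cdot z)$ forces $\sigma_{x\cdot y} = \sigma_{y\cdot x}$, and transitivity of $\sim$ then yields $\sigma_{x\cdot x} = \sigma_{y\cdot y}$, as required.

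For the forward direction, suppose $X$ is non-degenerate. I would first check that $\sigma(X)$ is a cycle set: the cycle axiom on $\sigma(X)$ is inherited from $X$; surjectivity of left multiplication on $\sigma(X)$ follows from surjectivity of each $\sigma_x$ on $X$; and for injectivity, the cycle axiom rewrites $\sigma_{x\cdot y_1} = \sigma_{x\cdot y_2}$ as an identity in $(y_i\cdot x)\cdot(y_i\cdot w)$, from which $w = x$ combined with non-degeneracy of $X$ yields $y_1\cdot x = y_2\cdot x$ and hence $\sigma_{y_1} = \sigma_{y_2}$. Surjectivity of $\bar{s}$ is then immediate. For injectivity of $\bar{s}$, the cleanest route is to show that the dual $\odot$ on $X$ (guaranteed by Proposition 3.2.6) respects $\sim$, descending to a well-defined operation on $\sigma(X)$; verifying that the induced operation still satisfies 3.8 and 3.9 then gives, by Proposition 3.2.6 applied in reverse, non-degeneracy of $\sigma(X)$.

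For the backward direction, suppose $\sigma(X)$ is a non-degenerate cycle set. Injectivity of $s$ on $X$ is clean: if $s(x) = s(y)$ then $\sigma_{x\cdot x} = \sigma_{y\cdot y}$, so $\sigma_x = \sigma_y$ by injectivity of $\bar{s}$; evaluating $\sigma_x = \sigma_y$ at $z = x$ gives $y\cdot x = x\cdot x = y\cdot y$, and bijectivity of $\sigma_y$ forces $x = y$. The hard part will be surjectivity of $s$: given $y \in X$, surjectivity of $\bar{s}$ yields $x$ with $x\cdot x \sim y$, but a priori no element of $X$ squares exactly to $y$. My strategy would be to use the dual $\bar{\odot}$ on $\sigma(X)$ to locate the right $\sim$-class and then produce the exact preimage using the cycle axiom on $X$. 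This lifting step is the main obstacle, since fibers of $\sigma$ can be large and the image of $s$ need not be saturated under $\sim$ without a further argument.
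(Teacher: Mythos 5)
Your proposal is being measured against a proof the paper does not actually contain: for this proposition the paper writes only ``See \cite{ru1}, page 53'', so the comparison can only be with a complete argument in the abstract. Much of what you write is correct and would survive being written out: the descent of $s$ to $\bar{s}$ (which also follows at once from Lemma 3.3.10, since $x\sim x$ and $x\sim y$ give $x\cdot x\sim y\cdot y$), the verification that $\sigma(X)$ is a cycle set when $X$ is non-degenerate (your injectivity step --- take $w=x$ in $(y_1\cdot x)\cdot(y_1\cdot w)=(y_2\cdot x)\cdot(y_2\cdot w)$, use injectivity of squaring to get $y_1\cdot x=y_2\cdot x$, then cancel $\sigma_{y_1\cdot x}$ to get $\sigma_{y_1}=\sigma_{y_2}$ --- is exactly right), surjectivity of $\bar{s}$, and the injectivity half of the converse. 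But two essential steps are missing. In the forward direction you reduce injectivity of $\bar{s}$ to the claim that $\odot$ respects $\sim$ and simply assert that claim; it is true, but proving that $\sigma_x=\sigma_{x'}$ forces $x\odot w=x'\odot w$ for all $w$ is precisely the hard content (it amounts to $\tau_x=\tau_y\Leftrightarrow\lambda_x=\lambda_y$ for the associated involutive solution), so as written the crux of the forward direction is deferred rather than proved. In the backward direction you openly leave surjectivity of $s$ unresolved, and that is a genuine hole, not a routine verification.

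Both holes close with one observation for which you already have all the ingredients: on a fibre $F_u=\{x\in X:\sigma_x=\sigma_u\}$ the squaring map coincides with $\sigma_u$, because $x\cdot x=\sigma_x(x)=\sigma_u(x)$ for $x\in F_u$. For surjectivity of $s$ in the converse direction: given $y\in X$, pick $u$ with $\sigma_{u\cdot u}=\sigma_y$ (surjectivity of $\bar{s}$) and set $x=\sigma_u^{-1}(y)$; then $\sigma_{u\cdot x}=\sigma_y=\sigma_{u\cdot u}$, and injectivity of left multiplication by $\sigma_u$ in the cycle set $\sigma(X)$ gives $\sigma_x=\sigma_u$, whence $x\cdot x=\sigma_u(x)=y$. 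For injectivity of $\bar{s}$ in the forward direction: if $\sigma_{u\cdot u}=\sigma_{v\cdot v}$, set $x=\sigma_u^{-1}(v\cdot v)$; the same cancellation (now legitimate because you have already shown from non-degeneracy of $X$ that $\sigma(X)$ is a cycle set) gives $\sigma_x=\sigma_u$, hence $x\cdot x=u\cdot x=v\cdot v$, and injectivity of squaring on $X$ forces $x=v$, so $\sigma_u=\sigma_v$. This route avoids the dual operation entirely and turns your sketch into a complete and quite economical proof.
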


\begin{proof}
See \cite{ru1}, page 53.
\end{proof}

\begin{theorem}[{\cite[Thm. 2]{ru1}}]
Every finite cycle set is non-degenerate.
\end{theorem}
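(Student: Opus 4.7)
The goal is to show that the map $T: X \to X$ defined by $T(x) = x \cdot x$ is a bijection, which since $X$ is finite reduces to showing surjectivity. The plan is to exploit the action of the permutation group $G(X)$ on $X$ together with induction on $|X|$.

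The first step is to establish a compatibility between $T$ and the $G(X)$-action. Setting $y = z$ in the cycle axiom $(x \cdot y)(x \cdot z) = (y \cdot x)(y \cdot z)$ gives
\[ T(\sigma_x(y)) = \sigma_{\sigma_y(x)}(T(y)). \]
Because $\sigma_y$ is a bijection of $X$, for every $w \in X$ there is some $x$ with $\sigma_y(x) = w$, whence $\sigma_w(T(y)) = T(\sigma_x(y)) \in T(X)$. Thus each generator $\sigma_w$ maps $T(X)$ into itself, and finiteness of $T(X)$ upgrades this to equality, so $T(X)$ is a $G(X)$-invariant subset of $X$.

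The argument then proceeds by strong induction on $|X|$, with base case $|X| = 1$ trivial. For the inductive step, the $G(X)$-orbits partition $X$; each orbit $O$ is closed under $\cdot$ (since $x \cdot y = \sigma_x(y) \in O$ whenever $x, y \in O$) and the restrictions of the $\sigma_x$ to $O$ are bijections, so $O$ inherits a cycle set structure with $T|_O : O \to O$. If $G(X)$ does not act transitively on $X$, every orbit has size strictly less than $|X|$ and is therefore non-degenerate by the inductive hypothesis, so $T$ is bijective on each orbit and hence on all of $X$. If on the other hand $G(X)$ acts transitively, the only non-empty $G(X)$-invariant subset of $X$ is $X$ itself, and since $T(X)$ is non-empty and $G(X)$-invariant we conclude $T(X) = X$.

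I expect the transitive case to be the main obstacle. One might reflexively try to build $T^{-1}$ out of a dual operation $\odot$, but by Proposition 3.2.8 the existence of such a dual is exactly what non-degeneracy guarantees, so invoking it here would be circular. The key insight is to sidestep constructing an inverse altogether: transitivity of the $G(X)$-action together with the $G(X)$-invariance of $T(X)$ already forces $T$ to be surjective, no explicit formula for the inverse required.
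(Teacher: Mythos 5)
Your proof is correct, and it is genuinely different from what the paper does: the paper does not prove this theorem at all, deferring to \cite{ru1} as ``slightly beyond our scope,'' so you have supplied a complete, self-contained argument where none was given. I checked the key identity: putting $z=y$ in the cycle equation gives $(x\cdot y)\cdot(x\cdot y)=(y\cdot x)\cdot(y\cdot y)$, i.e.\ $T(\sigma_x(y))=\sigma_{\sigma_y(x)}(T(y))$, and since $\sigma_y$ is onto this does show $\sigma_w(T(X))\subseteq T(X)$ for every $w$, whence $T(X)$ is a non-empty $G(X)$-invariant set; the orbit decomposition and the transitive case then go through exactly as you say. (This ``two equal arguments'' trick is the same germ as the computation $z\cdot z=({}^x y)\cdot({}^x y)$ in the paper's proof of Proposition 3.2.8, but you are using it to prove surjectivity of the squaring map rather than assuming it.) What your route buys is economy: it needs only the orbit/invariance language the paper already sets up in Section 5.1, and it never touches the dual operation, so there is no circularity with Proposition 3.2.8. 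Two small remarks. First, the induction on $|X|$ is dispensable: your invariance computation localises, since for $y$ in an orbit $O$ one gets $\sigma_w(T(y))=T(\sigma_x(y))\in T(O)$, so $T(O)$ is a non-empty $G(X)$-invariant subset of the single orbit $O$ and hence equals $O$; summing over orbits gives surjectivity of $T$ directly, with no case split on transitivity. Second, you should make explicit (as you implicitly do) that each orbit really is a sub-cycle-set: $\sigma_x(O)\subseteq O$ together with injectivity and finiteness gives $\sigma_x|_O\in Sym(O)$, and the cycle equation restricts. Neither point is a gap, only a tightening.
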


\begin{proof}
See \cite{ru1}, page 53.
\end{proof}

Proposition 3.3.11 combined with 3.3.8 tells us that the retraction of a linear cycle set is a non-degenerate cycle set. When considering the linear extension of a cycle set, this retraction is worthy of its own notation.

\begin{definition}
Let $X$ be a non-degenerate cycle set. We denote the retraction of its linear extension $\Z^{(X)}$ by $A(X)$. Proposition 3.3.8 tells us that:
\[ A(X) = \Z^{(X)}/\sigma^{-1}(1). \]
\end{definition}

The composition $X \hookrightarrow \Z^{(X)} \to A(X)$ gives a natural morphism $\rho:X \to A(X)$ whose image can be identified with the retraction $\sigma(X)$. Thus $\rho$ is injective if and only if $X$ is irretractable.

A consequence of proposition 3.5.7, which we will see soon, is that the retraction of a linear cycle set is not only a non-degenerate cycle set, but a linear cycle set.

\subsection{Linear Cycle Sets are Right Braces}

In this section we will see that linear cycle sets are equivalent to right braces. To prove this, it will be easier if we think about linear cycle sets in terms of their inverse operation $b^a$ rather than the operation $a \cdot b$. To formulate a definition of a linear cycle set in terms of $a^b$ we define some new notation.

\begin{definition}
Let $A$ be a linear cycle set. We define the \emph{adjoint multiplication} $\circ$ on $A$ by:
\begin{align}
   a \circ b = a^b + b. 
\end{align}
\end{definition}

We will see that this corresponds exactly to the $\circ$ operation in a brace, but for now it is simply a convenient abbreviation for giving an equivalent formulation of definition 3.3.5.

\begin{proposition}[{\cite[Prop. 2]{ru2}}]
Let $(A,+)$ be an abelian group, and $\cdot$ be a binary operation on $A$ such that  $\sigma_a: b \mapsto a \cdot b$ is invertible for all $a \in A$. We write $a^b = \sigma_b^{-1}(a)$, and $a \circ b = a^b + b$ as before. Then $(A,+,\cdot)$ is a linear cycle set iff for $a,b,c \in A$:
\begin{align}
(a + b)^c &= a^c + b^c,\\
(a^b)^c &= a^{b\circ c}.
\end{align}
\end{proposition}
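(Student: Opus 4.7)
The plan is to translate both (3.23) and (3.24) into operator identities involving the left-multiplications $\sigma_a$ and then match them directly to the linear-cycle-set axioms (3.17) and (3.18). Since each $\sigma_c$ is a bijection, identity (3.23) -- read as $\sigma_c^{-1}(a+b) = \sigma_c^{-1}(a) + \sigma_c^{-1}(b)$ -- is equivalent to $\sigma_c$ being an additive endomorphism of $(A,+)$, that is, to (3.17). Identity (3.24), written as $\sigma_c^{-1}\sigma_b^{-1}(a) = \sigma_{b\circ c}^{-1}(a)$ and inverted, becomes the operator law $\sigma_b \sigma_c = \sigma_{b\circ c}$, i.e.\ the pointwise identity
\[ (b\circ c)\cdot v \;=\; b\cdot(c\cdot v) \qquad (\star) \]
for all $b,c,v \in A$. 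So once (3.23)$\iff$(3.17) is noted, the proposition reduces to proving that (3.18) is equivalent to $(\star)$, under the standing hypothesis that each $\sigma_a$ is a bijection.

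For the forward direction, I would assume $A$ is a linear cycle set and substitute $a\mapsto c,\ b\mapsto b^c,\ c\mapsto v$ into (3.18). The right-hand side becomes $(c\cdot b^c)\cdot(c\cdot v) = b\cdot(c\cdot v)$, since $c\cdot b^c = b$ by definition of $b^c$, while the left-hand side is $(c+b^c)\cdot v = (b\circ c)\cdot v$. This is precisely $(\star)$. Since $b\mapsto b^c$ ranges over all of $A$, no generality is lost.

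Conversely, suppose (3.23) and (3.24) hold, equivalently (3.17) and $(\star)$. In $(\star)$ I would substitute $c\mapsto a$ and $b\mapsto a\cdot b$; using $(a\cdot b)^a = b$, I get $(a\cdot b)\circ a = b + a = a+b$, so $(\star)$ rewrites as $(a+b)\cdot v = (a\cdot b)\cdot(a\cdot v)$, which is (3.18) after renaming $v$ to $c$. The remaining cycle-set identity $(a\cdot b)\cdot(a\cdot c) = (b\cdot a)\cdot(b\cdot c)$ is then immediate from (3.18) together with commutativity of $+$, so $(A,\cdot)$ is genuinely a cycle set and, combined with (3.17) and (3.18), a linear cycle set. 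The main obstacle is purely bookkeeping: checking that the substitutions $b\mapsto b^c$ and $b\mapsto a\cdot b$ are legitimate bijections of $A$, and correctly simplifying $b\circ c = b^c + c$ after each substitution. Everything else is a direct consequence of the identification $\sigma_b\sigma_c = \sigma_{b\circ c}$.
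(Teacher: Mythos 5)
Your proof is correct and follows essentially the same route as the paper's: both reduce the claim to the two separate equivalences $(3.23)\iff(3.17)$ and $(3.24)\iff(3.18)$ using the same invertible substitutions ($b\mapsto b^c$ in one direction, $b\mapsto a\cdot b$ in the other), with your operator-identity packaging $\sigma_b\sigma_c=\sigma_{b\circ c}$ being merely a cleaner rewriting of the paper's direct computations. The one small addition on your side is that you explicitly verify the cycle-set identity $(a\cdot b)\cdot(a\cdot c)=(b\cdot a)\cdot(b\cdot c)$ from $(3.18)$ and commutativity of $+$, a step the paper leaves implicit.
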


\begin{proof}
By 3.7, 3.17 and 3.23 are equivalent since:
\begin{align*}
\text{assuming 3.17: }& a^c + b^c = (c\cdot(a^c + b^c))^c = (c\cdot a^c + c\cdot b^c)^c = (a+b)^c,\\
\text{assuming 3.23: }& a \cdot(b+c) = a\cdot(a\cdot b^a + a\cdot c^a) = a\cdot(a\cdot b + a\cdot c)^a = a\cdot b + a\cdot a.
\end{align*}

Likewise we see that 3.18 and 3.24 are equivalent:
\begin{align*}
\text{assuming 3.18: }& (c+b^c)\cdot (a^b)^c = (c\cdot b^c)\cdot (c \cdot (a^b)^c) = b\cdot a^b = a\\
&\Rightarrow (a^b)^c = ((c + b^c) \cdot (a^b)^c)^{c+b^c} = a^{c+b^c} = a^{b\circ c},\\
\text{assuming 3.24: }& c =(a \cdot b) (a\cdot (c^{a\cdot b})^a) = (a \cdot b) \cdot (a \cdot c^{(a \cdot b)^a + a}) = (a\cdot b) \cdot (a \cdot c^{b + a})\\
&\Rightarrow (a + b)\cdot c = (a\cdot b) \cdot (a \cdot ((a+b)\cdot c^{a+b})) = (a\cdot b) \cdot (a \cdot c).
\end{align*}
\end{proof}

Now we are ready to show the connection between linear cycle sets and right braces:

\begin{theorem}[{\cite[Prop. 5]{ru2}}]
Let $(A,+,\cdot)$ be a linear cycle set and $\circ$ be the adjoint multiplication on $A$. Then $(A,+,\circ)$ is a right brace.
\end{theorem}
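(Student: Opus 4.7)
The plan is to unpack what "right brace" means and check each condition using the reformulation of a linear cycle set given in Proposition 3.4.2 (formulae 3.23 and 3.24). By definition I must show that (i) $(A,+)$ is an abelian group, which is given; (ii) $(A,\circ)$ is a group; and (iii) the right-brace identity $(a+b)\circ c + c = a\circ c + b\circ c$ holds.

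First I would dispatch the brace identity, since it follows essentially for free from additivity of the exponential in the base: unwinding the definition $x \circ y = x^y + y$, both $(a+b)\circ c + c$ and $a\circ c + b\circ c$ simplify to $a^c + b^c + 2c$, with the only non-trivial step being an application of 3.23 to $(a+b)^c = a^c + b^c$. This is the cleanest part and essentially records why the definition $a\circ b = a^b + b$ was chosen.

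Next I would verify the group structure on $(A,\circ)$. For the identity, I would take $e = 0$ and use Lemma 3.3.9: the identities $a\cdot 0 = 0$ and $0\cdot a = a$ give $0^a = 0$ and $a^0 = a$, so that $0\circ a = a = a\circ 0$. For associativity, I would compute $(a\circ b)\circ c = (a^b + b)^c + c$, split this via 3.23 into $(a^b)^c + b^c + c$, then apply 3.24 to rewrite $(a^b)^c$ as $a^{b\circ c}$, recovering $a^{b\circ c} + (b\circ c) = a\circ (b\circ c)$. Inverses are where the non-degeneracy of $A$ (Proposition 3.3.8) enters: solving $a\circ b = 0$ amounts to $a^b = -b$, i.e. $a = b\cdot(-b) = -(b\cdot b)$ by (3.19), and the squaring map is bijective on $A$, so a (unique) right inverse exists. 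I would then promote this to a two-sided inverse in the standard monoid-theoretic way, using that an associative monoid with two-sided identity in which every element has a right inverse is already a group.

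I do not expect a genuine obstacle: every step has been pre-engineered by the earlier lemmas — (3.23) gives the brace axiom and one half of associativity, (3.24) gives the other half, (3.20) gives the identity, and Proposition 3.3.8 together with (3.19) gives inverses. The only point that requires a little care is making sure the inverse produced from non-degeneracy is two-sided rather than merely one-sided, and this is a short abstract argument once associativity and the identity are in hand.
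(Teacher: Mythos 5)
Your proposal is correct and follows essentially the same route as the paper: both verify the identity element via $a\cdot 0=0$ and $0\cdot a=a$, get associativity and the right-brace law from (3.23) and (3.24), and produce the inverse $-(b\cdot b)$ using (3.19) and non-degeneracy (Proposition 3.3.8). The only cosmetic differences are that the paper checks two-sidedness of the inverse by an explicit computation where you invoke the standard monoid lemma, and your citation of ``Lemma 3.3.9'' for the identities $a\cdot 0=0$, $0\cdot a=a$ should read Lemma 3.3.7 (the equation numbers (3.19)--(3.20) are the right ones).
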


\begin{proof}
First we see that $(A,\circ)$ is a group. 0 is the identity in a brace, so we verify that this is the case. Using 3.7 and 3.22:
\begin{align*}
0^a = (0 + 0)^a = 0^a + 0^a &\Rightarrow 0 \circ a = 0^a + a = 0 + a = a,\\
a^0 = 0\circ a^0 = a &\Rightarrow a \circ 0 = a^0 + 0 = a.
\end{align*}

Now suppose $a^{-1}$ is such that $(a^{-1})\circ a = 0$. We can deduce:
\[ (a^{-1})^a + a = 0 \Rightarrow (a^{-1})^a = -a. \] 
$\sigma_a$ is invertible for all $a \in A$, hence: 
\begin{align*}
    (a^{-1})^{a} = \sigma_a^{-1}(a^{-1}) &\Rightarrow a\cdot (a^{-1})^{a} =  \sigma_a(\sigma_a^{-1}(a^{-1})) = a^{-1}\\
&\Rightarrow a^{-1} = a\cdot (a^{-1})^a = a\cdot(-a)
\end{align*}
giving us existence of the left inverse. By 3.19 we have $a^{-1} = -(a\cdot a)$ giving us uniqueness since by proposition 3.3.8 $A$ is non-degenerate. It remains to show that $a^{-1}$ is also a right inverse:
\begin{align*}
a = a^0 = a^{-(a\cdot a) \circ a} = (a^{-(a\cdot a)})^a &\Rightarrow a^{-(a\cdot a)} = a \cdot (a^{-(a\cdot a)})^a = a\cdot a\\
&\Rightarrow a \circ -(a\cdot a) = a^{-(a\cdot a)} - a\cdot a = a\cdot a - a\cdot a = 0.
\end{align*}

Associativity follows from 3.24, since:
\begin{align*}
& d^{(a\circ b)\circ c} = [d^{a\circ b}]^c = [(d^a)^b]^c = (d^a)^{b\circ c} = d^{a\circ(b\circ c)}\\
&\Rightarrow (a\circ b)\circ c = a\circ(b\circ c)
\end{align*} 

Thus to show that $(A,+,\circ)$ is a right brace, we need to verify the brace distributivity: $(a + b)\circ c + c = a\circ c + b\circ c$. Indeed we have:
\begin{align*}
(a + b)\circ c + c &= (a + b)^c + c + c\\
&= a^c + c + b^c + c\\
&= a\circ c + b\circ c.
\end{align*}
\end{proof}

So every linear cycle set has an associated right brace. In fact this goes both ways.

\begin{theorem}[{\cite[Prop. 5]{ru2}}]
The map from linear cycle sets to right braces $(A,+,\cdot) \mapsto (A,+,\circ)$ given in theorem 3.4.3 is a bijection.
\end{theorem}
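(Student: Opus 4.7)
The plan is to exhibit an explicit inverse to the map $(A,+,\cdot) \mapsto (A,+,\circ)$ of theorem 3.4.3 by constructing a linear cycle set structure on any right brace and verifying that the two compositions are the identity.

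Given a right brace $(A,+,\circ)$, any linear cycle set mapping to it must satisfy $a^b = a \circ b - b$, so I would define
\[ a \cdot b := b \circ a^{-1} - a^{-1}, \]
where $a^{-1}$ denotes the inverse of $a$ in $(A,\circ)$. The map $\sigma_a : b \mapsto a \cdot b$ is a bijection since it is the composition of the $\circ$-translation $b \mapsto b \circ a^{-1}$ with the additive translation by $-a^{-1}$. To see that $(A,+,\cdot)$ is a linear cycle set, by proposition 3.4.2 it suffices to verify identities 3.23 and 3.24 with $a^b = a \circ b - b$. Identity 3.23 is immediate from the right-brace distributivity; for 3.24 I would first derive the subtraction identity $(x-y) \circ c = x \circ c - y \circ c + c$ by combining the right-brace distributivity with $(-y) \circ c = 2c - y \circ c$ from lemma 2.2.5, and then apply it with $x = a \circ b$ and $y = b$ to obtain $(a^b)^c = a \circ b \circ c - b \circ c = a^{b \circ c}$.

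It remains to check that the two compositions are the identity. Starting from a right brace, applying the construction and then theorem 3.4.3 recovers the original $\circ$, since $a \circ' b = a^b + b = (a \circ b - b) + b = a \circ b$. Conversely, starting from a linear cycle set $(A,+,\cdot)$, forming the brace $\circ$ and then re-applying the construction yields $a \cdot' b = b \circ a^{-1} - a^{-1} = (b^{a^{-1}} + a^{-1}) - a^{-1} = b^{a^{-1}}$. To identify $b^{a^{-1}}$ with $a \cdot b$ I would apply 3.24 to get $(b^{a^{-1}})^a = b^{a^{-1} \circ a} = b^0$, and note that lemma 3.3.9 gives $0 \cdot c = c$, hence $\sigma_0 = \id$ and $b^0 = b$; this forces $\sigma_a(b) = b^{a^{-1}}$, so $\cdot' = \cdot$.

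The main obstacle is verifying identity 3.24 for the newly defined operation, which requires the right-brace analogue of the subtraction identity that is latent in lemma 2.2.5. Once 3.23 and 3.24 are established, the two mutual-inverse checks reduce to short calculations, with the key observation being the equality $b^0 = b$ coming from lemma 3.3.9.
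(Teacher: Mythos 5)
Your proposal is correct and follows essentially the same route as the paper: both arguments construct the inverse map by equipping a right brace $(A,+,\circ)$ with the left multiplication whose inverse is $a \mapsto a\circ b - b$, and then reduce the verification that this yields a linear cycle set to identities 3.23 and 3.24 via proposition 3.4.2, with 3.24 following from the right-brace subtraction identity exactly as in your calculation. The one step you should make explicit is the claim, used both when you invoke proposition 3.4.2 and in your first round-trip check, that for your operation $a\cdot b = b\circ a^{-1} - a^{-1}$ one indeed has $a^b = \sigma_b^{-1}(a) = a\circ b - b$; this amounts to showing that $a \mapsto a\circ b^{-1} - b^{-1}$ and $a\mapsto a\circ b - b$ are mutually inverse, which your own subtraction identity gives at once, e.g. $\left(a\circ b^{-1} - b^{-1}\right)\circ b - b = a\circ b^{-1}\circ b - b^{-1}\circ b + b - b = a$, and similarly in the other order. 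The paper sidesteps this point by \emph{defining} $\sigma_a := (\lambda^{op}_a)^{-1}$ abstractly rather than by a formula, at the cost of not exhibiting $\sigma_a$ explicitly. Your proposal is in one respect more complete than the paper's proof: you check that both compositions are the identity, whereas the paper only verifies that the adjoint multiplication of the constructed cycle set recovers the original $\circ$, leaving injectivity (which is immediate, since $\circ$ determines $a^b = a\circ b - b$ and hence $\cdot$) unstated. One minor correction: the identity $0\cdot c = c$ that you need for $b^0 = b$ is lemma 3.3.7, not 3.3.9.
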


\begin{proof}
Let $(B,+,\circ)$ be a right brace, and $\lambda^{op}_a(b) = b \circ a - a$ (as in definition 2.9, except with the opposite multiplication). We claim that the linear cycle set which maps to $(B,+,\circ)$ is $(B,+,\cdot)$ where:
\[ a \cdot b = (\lambda^{op}_a)^{-1}(b). \]

We need to show that $(B,+,\cdot)$ is in fact a linear cycle set, and that the brace multiplication coincides with the adjoint multiplication.

We follow proposition 3.5: $a^b = ((\lambda^{op}_b)^{-1})^{-1}(a) = \lambda^{op}_b(a) = a \circ b - b$. Then clearly we have $a^b + b = a \circ b$, so the brace multiplication and adjoint multiplication coincide as required. It remains to show then, that $a^b$ and $a \circ b$ satisfy 3.23 and 3.24.

\begin{align*}
\text{for 3.23: }& (a + b)^c = (a+b)\circ c - c = a \circ c + b \circ c - 2c = a^c + b^c,\\
\text{for 3.24: }&  (a^b)^c = (a \circ b - b) \circ c - c = a \circ b \circ c - b \circ c = a^{b \circ c}.
\end{align*}
(The second line uses 2.11).
\end{proof}

\subsection{Socle Series of a Brace}

In this section we will define the \emph{socle series} of a brace, and see how it allows us to think about retractability in terms of braces and ideals of braces. Previously we have defined braces in terms of the adjoint multiplication and only occasionally used the ring multiplication as a tool for proofs, but for the definitions in this chapter the ring multiplication is more convenient. (We abbreviate $a*b$ to $ab$ since there is no risk of confusion here.) We begin by reformulating definition 2.2.1.

\begin{proposition}
Let $(A,+)$ be an abelian group with multiplication $*$ (we will write $ab$ for $a*b$). Then $(A,+,\circ)$ is a left [right] brace where $a \circ b = a + b + ab$ if and only if for all $a,b,c \in A$, $x \mapsto xa + x$ [$x \mapsto ax + x$] is bijective and:
\begin{align}
(a + b)c = ac + bc, &\quad [\, a(b+c) = ab + ac, \,]\\
a(b + c + bc) &= ab + ac + (ab)c.
\end{align}
\end{proposition}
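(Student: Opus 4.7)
The plan is to translate the two defining conditions of a left brace---that $(A, \circ)$ is a group, and that the brace distributivity $a \circ (b+c) + a = a \circ b + a \circ c$ holds---into conditions on the multiplication $*$. By Lemma 2.2.2, the right brace case follows from the left by replacing $\circ$ with $\circ^{op}$ throughout, so it suffices to treat one case.

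For the forward direction, assume $(A,+,\circ)$ is a left brace. The bijectivity condition (i) follows because $x \mapsto xa+x$ equals $x \mapsto x \circ a - a$, a composition of right $\circ$-multiplication by $a$ (bijective as $(A,\circ)$ is a group) and additive translation by $-a$. The distributivity condition (ii) is obtained from the brace axiom by substituting $a \circ b = a+b+ab$ and cancelling common terms (as in the proof of Proposition 2.2.4). For (iii), I would expand $(a\circ b)\circ c = a\circ(b\circ c)$ using the definition of $\circ$, then simplify the cross-term with (ii); the identity rearranges to (iii) exactly.

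For the reverse direction, assume (i), (ii), (iii). First establish that $0$ is the $\circ$-identity. Applying (ii) with $a=b=0$ gives $0\cdot c = 0$, hence $0\circ a = a$. The companion identity $a\cdot 0 = 0$ is more delicate: applying (iii) with $b=0$ (using $0\cdot c = 0$) gives $(a\cdot 0)\cdot c = -(a\cdot 0)$ for every $c$; specialising $c=0$ shows that $a\cdot 0$ lies in the fibre of $0$ under the map $x \mapsto x\cdot 0 + x$, which is bijective by (i) at $a=0$; since $0$ is also in that fibre, we conclude $a\cdot 0 = 0$ and hence $a\circ 0 = a$. Next, associativity of $\circ$ is verified by expanding both $(a\circ b)\circ c$ and $a\circ(b\circ c)$ in terms of $*$, applying (ii) to simplify the cross-term, and using (iii) to equate the results. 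For inverses, (i) supplies a unique left $\circ$-inverse for each $a$; together with associativity and identity, the standard monoid argument (left inverses are right inverses in an associative monoid) upgrades these to two-sided inverses, so $(A,\circ)$ is a group. Finally, the brace distributivity is recovered from (ii) by direct expansion, since $a \circ (b+c) + a$ and $a\circ b + a\circ c$ both expand to expressions whose difference vanishes under (ii).

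The main obstacle will be the derivation of $a\cdot 0 = 0$ in the reverse direction, where (i), (ii), (iii) must interact: one cannot simply read this off from (ii) alone. The key insight is that (iii) specialised at $b=0$ forces $a\cdot 0$ into the fibre of $0$ under $x \mapsto x\cdot 0 + x$, and the bijectivity from (i) then pins it down. Once this is in place, the remaining verifications of associativity, inverses, and the brace distributivity reduce to routine manipulations of (ii) and (iii).
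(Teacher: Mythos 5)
Your proposal is correct and follows the same skeleton as the paper's proof: translate the group axioms and the brace distributivity into statements about $*$, obtain the bijectivity of $x\mapsto xa+x$ as the composition $x\mapsto (x\circ a)-a$, and produce inverses from the preimage of $-a$ under that map. Two points of comparison are worth recording. First, your treatment of the two-sided identity is genuinely better than the paper's: the paper asserts that the distributivity ``instantly'' gives that $0$ is the identity, but it only yields $0*c=0$ and hence the one-sided identity $0\circ a=a$; your derivation of $a*0=0$, by specialising (iii) at $b=0$ to get $(a*0)*c=-(a*0)$, then taking $c=0$ and using injectivity of $x\mapsto x*0+x$, supplies exactly the missing step. (The paper's version can also be rescued by invoking the criterion that an associative operation with a left identity and left inverses is a group, but it never says so.) Second, a warning about labels: executed ``as in Proposition 2.2.4'', your expansion of the brace axiom gives $a*(b+c)=a*b+a*c$ for a \emph{left} brace and $(a+b)*c=a*c+b*c$ for a \emph{right} brace, so the unbracketed conditions in the statement actually characterise \emph{right} braces; the paper's own proof confirms this by working with the right-brace axiom 2.5, and the statement's ``left [right]'' appears to be a typo for ``right [left]''. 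Relatedly, your reduction of one case to the other via $\circ^{op}$ needs a word of care, since condition (iii) is not literally self-opposite (its opposite reads $(b+c+cb)a = ba+ca+c(ba)$); the two condition lists only correspond after one observes that, in the presence of the matching distributivity, (iii) is equivalent to associativity of $\circ$, which is self-dual. None of this affects the correctness of your argument.
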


\begin{proof}
First we see that if 3.25 holds then 3.26 is equivalent to associativity of $\circ$:
\begin{align*}
a(b + c + bc) = ab + ac + (ab)c &\iff a(b\circ c) = ab + (a + ab)c\\
&\iff a + b \circ c  + a(b\circ c) = a + b \circ c + ab + (a + ab)c\\
&\iff a \circ (b \circ c) = a + b + c + bc + ab + (a + ab)c\\
&\iff a \circ (b \circ c) = (a + b + ab) + c + (a + b + ab)c\\
&\iff a \circ (b \circ c) = (a \circ b) \circ c,
\end{align*}
so the circle operation is associative. 3.25 instantly gives us that 0 is the identity. Given $a \in A$ we need to find $b \in A$ such that $b \circ a = 0$. Let $-b$ be the image of $a$ under the inverse of $x \mapsto xa + x$, so $(-b)a - b = a$. 3.25 implies that $(-b)a = -ba$, so $b + a + ba = 0$ as required. Thus $(A,\circ)$ is a group. Now we show that 2.5 is equivalent to 3.25:
\begin{align*}
&(a+b)\circ c + c = a\circ c + b\circ c\\
\iff& (a + b) + c + (a+b)c + c =  a + c + ac + b + c + b*c\\
\iff& (a+b)c = ac + bc.
\end{align*}

We have shown that if the proposition is satisfied we have a brace, and that the a brace satisfies 3.25 and 3.26, so it remains to show that $x \mapsto xa + x$ is a bijection. Let $A$ be a brace and let $a \in A$. Since $(A,+)$ and $(A,\circ)$ are groups, the operations $x \mapsto x \circ a$ and $x \mapsto x - a$ are bijections so their composition $x \mapsto (x \circ a) - a = x + a + xa - a = xa + x$ is a bijection.
\end{proof}

We now give two definitions for braces that will be familiar from ring theory:

\begin{definition}
Let $A$ be a right brace. A subset $B \subseteq A$ is a \emph{sub-brace} of $A$ if it is a subgroup of $(A,+)$ and $(A,\circ)$. A sub-brace $I \subseteq A$ is an \emph{ideal} of $A$ if $ax\in I$ and $xa\in I$ whenever $a\in A$ and $x\in I$. 
\end{definition}
One consequence of 3.22 is
\begin{align*}\label{}
    a^b = a\circ b -b = a+b+ab-b,
\end{align*}
which gives us
\begin{align}
 a^b = ab+a,   
\end{align}
this will be a useful equation for the following results in this section.
\begin{definition}
Let $A$ be a brace. An abelian group $M$ together with a right operation $M \times A \rightarrow M$ will be called an \emph{$A$-module}.
If 
\begin{gather*}
    (x+y)a = xa +ya,\\
    x(a\circ b) = (xa)b+xa+xb,\\
    x0 = 0
\end{gather*}
holds for all $x,y \in M$ and $a, b\in A$.
\end{definition}
Note that every $A$-module can be regarded as a right $A^{\circ}$-module by
\begin{align*}
    M\times A^{\circ} \rightarrow M\\
    (x,a) \mapsto xa+x,
\end{align*}
and vice versa.
\begin{definition}
We define \emph{socle series} of $A$ by: 
\begin{align*}
&Soc_{0}(A):=0\\
    &Soc_{n+1}(A):=\{x\in A \,|\, \forall a\in A : ax\in Soc_{n}(A)\},
\end{align*}
for all $n\in \mathbb{N}$. It is common to write $Soc(A)$ instead of $Soc_{1}(A)$, and call this the \emph{socle} of $A$:
\[ Soc(A) = \{ x \in A : \forall a \in A, ax = 0\}. \]
\end{definition}
\begin{definition}
If $I$ is an ideal and $A$ a brace. Then $A/I$ is a \emph{factor brace}.
\end{definition}

We will quickly justify that factor braces are well defined. Addition is well defined as $(I,+)$ is a normal subgroup of $(A,+)$, so $(A/I,+)$ is just the quotient group. For $a_{1}, a_{2}, b_{1}, b_{2}\in A$ let $a_{1}-a_{2}\in I$ and $b_{1}-b_{2}\in I$. We need to verify that $a_1 b_1 - a_2 b_2 \in I$, or equivalently that $a_1b_1 + I = a_2b_2 + I$, so that multiplication is well defined.

Since $ax \in I$ for $x \in I$, $I$ is an $A$-submodule and therefore an $A^{\circ}$-submodule. Hence for any $x\in I$ and any $c\in A$ there exists a unique $y\in I$ such that $x=yc+y$. Therefore if $a\in A$
\begin{align*}
    a(x+c) -ac &= a(yc+y+c) -ac\\
    &= (ay)c + ay+ ac -ac\\
    &= (ay)c+ay.
\end{align*}
Hence $a(x+c)-ac\in I$ for all $a, c\in A$ and $x\in I$. Using this fact,we have:
\begin{align*}
a_1b_2 - a_1b_1 = a_1((b_2 - b_1) + b_1) - a_1b_1 &\in I,\\
a_2b_2 - a_1b_2 = (a_2 - a_1)b_2 &\in I,
\end{align*}
so $a_1b_1 + I = a_1b_2 + I = a_2b_2 + I$ as required.

\begin{proposition}
$c\in Soc(A) \Longleftrightarrow a(b+c)=ab$, for all $a,b \in A$.
\end{proposition}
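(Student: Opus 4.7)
The plan is to prove both implications directly from the definition of $Soc(A)$ and the identity 3.26, which relates the ring multiplication to the circle operation in a right brace.

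For the forward direction, I would start from $c \in Soc(A)$, which by definition gives $ac = 0$ for every $a \in A$. In particular $bc = 0$ and $(ab)c = 0$, so the circle operation simplifies as $b \circ c = b + c + bc = b + c$. Now apply 3.26 with these simplifications:
\begin{align*}
a(b+c) = a(b \circ c) = ab + ac + (ab)c = ab + 0 + 0 = ab,
\end{align*}
which is the desired identity.

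For the reverse direction, suppose $a(b+c) = ab$ holds for all $a, b \in A$. The idea is to specialise $b = 0$. This reduces the equation to $ac = a \cdot 0$, so it remains to verify that $a \cdot 0 = 0$ in a right brace. This is a short calculation: by lemma 2.2.5 (which holds for right braces too) $a \circ 0 = a$, and therefore $a \cdot 0 = a \circ 0 - a - 0 = 0$. Hence $ac = 0$ for every $a \in A$, which means $c \in Soc(A)$.

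There is no real obstacle here; the only mild subtlety is that we are working with a right brace, so we do not have left distributivity of $*$ over $+$ available in general, and thus we cannot simply expand $a(b+c)$ as $ab + ac$. The forward direction must instead be routed through 3.26 (which is a genuine axiom of the brace and does not require left distributivity), and the reverse direction only needs the trivial identity $a \cdot 0 = 0$ together with the choice $b = 0$.
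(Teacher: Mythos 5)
Your proof is correct. The forward direction is essentially identical to the paper's: both observe that $c\in Soc(A)$ forces $bc=0$, so $b+c=b\circ c$, and then apply 3.26 to kill the terms $ac$ and $(ab)c$. The converse, however, is where you genuinely diverge, and your route is considerably more economical. The paper fixes $b$, uses the bijectivity of $d\mapsto db+d$ (Proposition 3.5.1) to write $c=db+d$, expands $a(b+db+d)=a(d\circ b)$ via 3.26 to get $(ad)b+ad=0$, and then passes through the cycle-set identity $(ad)^b=0$ and $b\cdot 0=0$ to conclude $ad=0$, before finally recovering $ac=0$. You instead just specialise the hypothesis at $b=0$ and note $a*0=a\circ 0-a-0=0$, which immediately gives $ac=0$; this is valid since the hypothesis is quantified over all $b\in A$ and $0\in A$. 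The one thing the paper's longer detour is meant to buy is the auxiliary closure fact recorded immediately after the proposition — that $c\in Soc(A)$ implies $cb\in Soc(A)$ — which is then invoked in the proof of Proposition 3.5.7 to show $Soc(A)$ is an ideal. Your streamlined argument does not produce that byproduct, so if it were substituted into the paper, the closure of $Soc(A)$ under right multiplication would need its own (short) justification; as a proof of the stated equivalence alone, though, yours is complete and cleaner.
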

\begin{proof}
($\Rightarrow$) Let $c\in Soc(A)$, then 
\begin{align*}
    a(b+c) &= a(bc+b+c)\\
    &= (ab)c+ab+ac\\
    &= ab.
\end{align*}\\
($\Leftarrow$) Now assume that $a(b+c) = ab$ for all $a\in A$ and any $b\in A$. By proposition 3.5.1 we have that the map $d\mapsto db+d$ are bijective maps. Thus $c=db+d$ for a unique $d\in A$. Thus
\begin{align*}
    ab&= a(b+c)\\
    &= a(b+db +d)\\
    &= (ad)b +ad+ab,
\end{align*}
for all $a\in A$. Hence $(ad)b + ad= 0$. By theorem 3.4.3 we have that $(A, \cdot)$ is a cycle set, hence $(ad)b + ad= 0$ is equivalent to $(ad)^{b} =\sigma_b^{-1}(ad) = 0$. Left multiplication in the cycle set is a bijection, therefore
\begin{align*}
    ad&= \sigma_{b}(\sigma_{b}^{-1}(ad))\\
    &=\sigma_{b}(0)\\
    &= b\cdot 0\\
    &= 0,
\end{align*}
for all $a,b\in A$. The last equality is justified using lemma 3.3.7.
Hence $d\in Soc(A)$. Therefore
\begin{align*}
    0 &= ac\\
    &= a(db+d)\\
    &= a(db),
\end{align*}
thus, $d\in Soc(A) \Rightarrow db\in Soc(A)$ for any $b\in A$. Using the right implication proved above, we obtain
\begin{align*}
    ac &= a(db+d)\\
    &= a(db)\\
    &=0,
\end{align*}
meaning that $c\in Soc(A)$.
\end{proof}
In the proposition above we have proved that if $c\in Soc(A) \Rightarrow cb \in Soc(A)$ for any $b \in A$. This is going to be useful for the next proposition.
\begin{proposition}[{\cite[Prop. 7]{ru2}}]
Each $Soc_n(A)$ is an ideal of $A$, and the factor brace $A/Soc(A)$ is isomorphic to the retraction $\sigma(A)$.
\end{proposition}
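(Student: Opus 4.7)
The plan is to prove the ideal claim by induction on $n$, with the crux being the base case that $Soc(B)$ is an ideal of any right brace $B$; the inductive step will then go via a quotient argument. The isomorphism will follow by identifying $\sigma:A\to\sigma(A)$ as a surjective brace homomorphism with kernel $Soc(A)$ and invoking the first isomorphism theorem.

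For the base case, closure of $Soc(B)$ under $+$ is immediate from Proposition 3.5.6: if $x,y\in Soc(B)$, then $a(x+y)=ax=0$, and setting the first argument to $-y$ in 3.5.6 yields $a(-y)=a\cdot 0=0$, so $-y\in Soc(B)$. Closure under $\circ$ uses the module identity $a(x\circ y)=ax+ay+(ax)y$, which vanishes when $x,y\in Soc(B)$; the same identity applied to $a(x\circ x^{-1})=0$ gives $ax^{-1}=0$, placing $x^{-1}$ in $Soc(B)$. Left-multiplication closure is trivial since $ax=0$. The delicate part is right multiplication: I would first verify that $\sigma:(A,\circ)\to Sym(A)$ is a group homomorphism, equivalently $\lambda^{op}_b\lambda^{op}_a=\lambda^{op}_{a\circ b}$, which one derives from additivity of $\lambda^{op}_b$ in the right-brace setting plus associativity of $\circ$. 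This makes $Soc(B)=\ker\sigma$ normal in $(B,\circ)$, so $a^{-1}\circ x\circ a\in Soc(B)$ for $x\in Soc(B)$; using $a^{-1}\circ x=a^{-1}+x$ (because $a^{-1}x=0$) and right distributivity, this element reduces to $x+xa$, and subtracting $x$ gives $xa\in Soc(B)$.

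For the inductive step, assume $Soc_n(A)$ is an ideal and form the quotient $\pi:A\to A/Soc_n(A)$. Unwinding definitions gives $x\in Soc_{n+1}(A)$ iff $\pi(a)\pi(x)=0$ in $A/Soc_n(A)$ for every $a\in A$, iff $\pi(x)\in Soc(A/Soc_n(A))$, so $Soc_{n+1}(A)=\pi^{-1}(Soc(A/Soc_n(A)))$. By the base case $Soc(A/Soc_n(A))$ is an ideal, and the preimage of an ideal under a brace homomorphism is straightforwardly checked to be an ideal.

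For the isomorphism, define $\phi:A\to\sigma(A)$ by $x\mapsto\sigma_x$; this is surjective by definition of $\sigma(A)$. Using $\sigma_x=(\lambda^{op}_x)^{-1}$ together with $\lambda^{op}_x(y)=y\circ x-x$, one sees $\sigma_x=\id$ iff $yx=0$ for every $y$, giving $\ker\phi=Soc(A)$. The $+$- and $\circ$-closure of $Soc(A)$ established above makes $\sigma_x+\sigma_y:=\sigma_{x+y}$ and $\sigma_x\circ\sigma_y:=\sigma_{x\circ y}$ well-defined, equipping $\sigma(A)$ with a right-brace structure with respect to which $\phi$ is a brace homomorphism by construction. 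The first isomorphism theorem then yields $A/Soc(A)\cong\sigma(A)$. The main obstacle throughout is the right-multiplication closure in the base case, which is essentially a miniature theorem of its own, resting on the anti-homomorphism identity for $\lambda^{op}$ and the normality of $\ker\sigma$ in $(B,\circ)$ that it furnishes.
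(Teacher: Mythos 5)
Your proof is correct, and it departs from the paper's precisely at the one step that matters: closure of $Soc(A)$ under right multiplication, i.e.\ $xa\in Soc(A)$ for $x\in Soc(A)$. The paper gets this by invoking the claim $c\in Soc(A)\Rightarrow cb\in Soc(A)$, which it extracts from a computation buried inside the converse direction of Proposition 3.5.6 and which is not cleanly isolated there. You instead note that $\sigma\colon (A,\circ)\to Sym(A)$ is a group homomorphism (since $\lambda^{op}_{a\circ b}=\lambda^{op}_b\lambda^{op}_a$, which is exactly equation (3.24)), so that $Soc(A)=\ker\sigma$ is normal in $(A,\circ)$, and then unwind $a^{-1}\circ x\circ a = x+xa$ to conclude; this is self-contained, and it makes explicit the normality of the socle in the adjoint group, a fact the paper needs but never states. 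Your handling of the higher socles via $Soc_{n+1}(A)=\pi^{-1}\bigl(Soc(A/Soc_n(A))\bigr)$ is likewise a genuine improvement over the paper's ``by induction one can observe'': it reduces everything to the base case applied to the quotient brace together with stability of ideals under preimages of surjective brace morphisms. The isomorphism argument is essentially the paper's --- both identify the fibres of $\sigma$ with the cosets of $Soc(A)$ --- and you merely repackage it as the first isomorphism theorem. The one point to tighten: well-definedness of $\sigma_x\circ\sigma_y:=\sigma_{x\circ y}$ does not follow from closure of $Soc(A)$ under $+$ and $\circ$ alone; it follows either from normality of $\ker\sigma$ or, more directly, from the observation that $\sigma_{x\circ y}=\sigma_x\sigma_y$ is just composition in $Sym(A)$, so no choice of representatives is involved.
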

\begin{proof}
We need to prove that $A/Soc(A)$ is a factor brace, but first in order to do this we must show that $Soc(A)$ is an ideal. Let $a\in A$ and $b, c, d\in Soc(A)$. We first show that $Soc(A)$ is a subgroup of $(A,+)$. By Proposition 3.5.6, we have 
\begin{align*}
    a(b+c) &= ab\\
    &= 0,
\end{align*}
since $b\in Soc(A)$. Hence $b+c\in Soc(A)$. If $c\in Soc(A)$, then clearly $-c\in Soc(A)$ too. It is associative because $(A,+)$ is associative and $Soc(A)\subseteq A$. The identity in $(Soc(A), +)$ is $0$, the same as in $(A, +)$. Therefore $Soc(A)$ is a subgroup of $(A,+)$.

We now prove that $Soc(A)$ is a subgroup of $(A, \circ)$. Since $Soc(A) \subseteq A$, we have 
\begin{align*}
    a(b\circ c) &= a(bc +b+ c)\\
    &= (ab)c + ab + ac\\
    &= 0.
\end{align*}
Therefore $b\circ c\in Soc(A)$. Let $c^{-1}$ be the inverse of $c$ in $(A,\circ)$, then
\begin{align*}
    0 &= c^{-1}\circ c\\
    &= c^{-1}c +c^{-1}+ c\\
    &= c^{-1} +c,
\end{align*}
hence $c^{-1} = -c\in A$. Therefore clearly $c^{-1}\in Soc(A)$. Now, $(Soc(A), \circ)$ is associative because $Soc(A) \subseteq A$ and $(A, \circ)$ is associative. The identity in $(Soc(A), \circ)$ is $0$ again, the same as in $(A, \circ)$. Meaning that $Soc(A)$ is a subgroup of $(A, \circ)$.

Notice that if $a\in A$ and $c\in Soc(A)$, then $ac = 0 \in Soc(A)$. By Proposition 3.5.6, we also have that $ca\in Soc(A)$. Hence $Soc(A)$ is an ideal, thus by induction one can observe that the socle series consists of ideals. Since $Soc(A)$ is an ideal, by definition 3.5.5 it follows that $A/Soc(A)$ is a well-defined factor brace.

Now, notice that if two elements $a,b\in A$ are mapped to the same element in $\sigma(A)$, so $\sigma_a = \sigma_b$, then also $\sigma_a^{-1} = \sigma_b^{-1}$. Thus we have
\begin{align*}
    \sigma_a^{-1}(x) = \sigma_b^{-1}(x)
    &\iff x^a = x^b\\
    &\iff xa+x = xb+ x\\
    &\iff xa = xb\\
    &\iff x(a +Soc(A)) = x(b+ Soc(A))\\
    &\iff a+Soc(A) = b+Soc(A),
\end{align*}
using 3.27 and proposition 3.5.6. Hence two elements from $A$ are mapped to the same element in $\sigma(A)$ if and only if they are mapped to the same element in $A/Soc(A)$.
\end{proof}

As mentioned earlier, a consequence of this result is that the retraction of a linear cycle set is also a linear cycle set, since it is a brace.

\section{Left Braces}

We can now now use our collection of results on cycle sets and right braces to prove key results on left braces. First we use the equivalences between cycle sets and solutions, and linear cycle sets and right braces, to prove theorem 2.2.8. We then go on to prove that the adjoint group of the linear extension $\Z^{(X)}$ of a cycle set $X$ is isomorphic to the \emph{structure group} $G_X$ of the solution arising from $X$.

\subsection{Left Braces Give Solutions}

Recall theorem 2.2.8 at the end of section 2, which stated without proof that a left brace $B$ gives rise to a solution to the YBE $(B,r)$:
\begin{equation}
r(a,b) = \left(\lambda_a(b), \lambda_{\lambda_{a}(b)}^{-1}(a)\right),
\end{equation}
where the \emph{lambda function} $\lambda_a$ for each $a \in B$ is given by $\lambda_a(b) = a \circ b - a$.

We have now developed the tools to prove this using cycle sets:

\begin{proof}
Let $B^{op}$ be the opposite right brace to our left brace $B$. Then $\lambda_a$ in terms of the opposite multiplication is given by $\lambda_a(b) = a \circ b - a = b \circ^{op} a - a = \lambda_a^{op}(b)$. Thus by theorem 3.4.4 we have a linear cycle set $(B, +, \cdot)$, where $+$ is inherited from the left brace and $a \cdot b = \lambda_a^{-1}(b)$.

Since linear cycle sets are non-degenerate cycle sets by proposition 3.3.9, $B$ has an associated non-degenerate, unitary solution $R(a,b) = (a^b, {}^a b)$ to the QYBE. By non-degeneracy and lemma 3.2.9, $(B,r)$ is a solution to the YBE where $r = Rp$ (and $p(x,y) = (y,x)$):
\[
r(a,b) = (b^a, {}^b a) = (b^a, b^a \cdot a),
\]
where $b \mapsto b^a$ is inverse to the map $b \mapsto a \cdot b$. By definition of $\cdot$ we have $b^a = \lambda_a(b)$. Thus since $b^a \cdot a = \lambda_{b^a}^{-1}(a) = \lambda_{\lambda_a(b)}^{-1}(a)$, $r$ is the map given in 4.1.
\end{proof}

This theorem characterises one of the two major ways that brace theory is used to study the Yang-Baxter equation: finding solutions. Many techniques have been developed to construct braces, and to assess whether braces exists with certain properties. What theorem 2.2.8 tells us is that any time you construct a brace, you get a set theoretic non-degenerate involutive solution for free. Braces are useful for more than just finding solutions however, as we will see in the next subsection.

\subsection{The Structure Group}

Before Rump's work on cycle sets and braces, the main algebraic structure associated to a set theoretic solution $(X,r)$ was its \emph{structure group}.

\begin{definition}
Let $(X,r)$ be a solution to the YBE. The structure group $G_X$ of $(X,r)$, is the group generated by $X$ subject to the relations:
\begin{equation}
xy = wz \quad \text{when} \quad r(x,y) = (w,z). 
\end{equation}
\end{definition}

Introduced by Etingof, Schedler and Soloviev \cite{ess}, the structure group captures all of the behaviour of the solution at the expense of being very large and complex. We will give some key properties of the structure group of non-degenerate involutive solutions, then go on to see how the theory of braces and cycle sets can help us to understand this complicated group.

\begin{proposition}[{\cite[Prop. 2.1]{ess}}]
Let $(X,r)$ be a non-degenerate solution to the YBE with $r(x,y) = (\lambda_x(y), \tau_y(x))$. Then we have
\begin{enumerate}
\item[(i)] The assignment $x \mapsto \lambda_x$ extends to a left action of $G_X$ on $X$,
\item[(ii)] The assigmnent $x \mapsto \tau_x$ extends to a right action of $G_X$ on $X$.
\end{enumerate}
\end{proposition}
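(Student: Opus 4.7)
The plan is to exploit the universal property of the structure group: since $G_X$ is defined by generators $X$ and relations $xy = \lambda_x(y) \tau_y(x)$ (coming from $r(x,y) = (\lambda_x(y), \tau_y(x))$), giving a homomorphism out of $G_X$ amounts to specifying images for the generators in a way that respects those defining relations. The hypothesis that $(X,r)$ is a YBE solution will enter through lemma 3.2.5, which repackages the YBE precisely as compatibility identities for $\lambda$ and $\tau$.

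For part (i), I would define $\Lambda: X \to \mathrm{Sym}(X)$ by $\Lambda(x) = \lambda_x$ and check that it extends to a group homomorphism $\Lambda: G_X \to \mathrm{Sym}(X)$. By the universal property, the extension exists iff whenever $xy = wz$ is a defining relation in $G_X$ (so $w = \lambda_x(y)$ and $z = \tau_y(x)$), we have the equality $\lambda_x \lambda_y = \lambda_w \lambda_z$ in $\mathrm{Sym}(X)$. Substituting gives exactly the identity $\lambda_x \lambda_y = \lambda_{\lambda_x(y)} \lambda_{\tau_y(x)}$, which is one of the three compatibility conditions in lemma 3.2.5. Since $(X,r)$ is a YBE solution, it holds, and the induced homomorphism is the desired left action.

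For part (ii), a right action of $G_X$ on $X$ corresponds to an anti-homomorphism $T: G_X \to \mathrm{Sym}(X)$ satisfying $T(gh) = T(h)T(g)$. I would set $T(x) = \tau_x$ on generators and verify that the defining relation $xy = \lambda_x(y) \tau_y(x)$ is respected, which amounts to $T(y) T(x) = T(\tau_y(x)) T(\lambda_x(y))$, i.e.\ $\tau_y \tau_x = \tau_{\tau_y(x)} \tau_{\lambda_x(y)}$. After relabelling, this is the $\tau$-compatibility identity in lemma 3.2.5, and so it holds by hypothesis.

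The only real pitfall in this argument is keeping orientations and conventions consistent: one must be careful that the defining relation in $G_X$ is $xy = \lambda_x(y)\tau_y(x)$ (not the reverse), and that the right-action condition is translated into an \emph{anti}-homomorphism into $\mathrm{Sym}(X)$ so that the required identity ends up in the correct form. Once these conventions are fixed, both parts reduce immediately to applications of lemma 3.2.5, with no further computation needed.
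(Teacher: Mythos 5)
Your proposal is correct and follows essentially the same route as the paper: both reduce the statement to checking that the defining relations $xy=wz$ of $G_X$ are respected, i.e.\ that $\lambda_x\lambda_y=\lambda_{\lambda_x(y)}\lambda_{\tau_y(x)}$ and $\tau_y\tau_x=\tau_{\tau_y(x)}\tau_{\lambda_x(y)}$. The one difference is that the paper then defers these identities to \cite{ess}, whereas you actually close the argument by recognising them as two of the three compatibility conditions of lemma 3.2.5 (the $\tau$ one after swapping $x$ and $y$), which makes your version self-contained within the paper.
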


\begin{proof}
What this is telling us is that the map $G_X \times X \to X$ given by $(x_1 \cdots x_n, y) \mapsto \lambda_{x_1}\cdots \lambda_{x_n}(y)$ is a group action (and similarly $(x_1 \cdots x_n, y) \mapsto \tau_{x_n} \cdots \tau_{x_1}(y)$). Looking at 4.2, this means that we need to prove is that when $r(x,y) = (w,z)$ we have $\lambda_x \lambda_y = \lambda_w \lambda_z$ and $\tau_y \tau_x = \tau_z \tau_w$. (For details of this see \cite{ess}.)
\end{proof}

To give an explicit form for elements of $G_X$, we shall embed it into a larger group. Recall that if $N,H$ are groups and $\theta:H \to \text{Aut}(N)$ is a homomorphism (so $\theta$ defines an action of $H$ on $N$). Then the \emph{semidirect product} $N \rtimes_{\theta} H$ (often written $N \rtimes H$ if there is a natural choice of action $\theta$) is the set $H \times N$ with a group operation defined by:
\begin{equation}
(n_1, h_1)(n_2,h_2) = (n_1 \theta(h_1)(n_2),h_1 h_2).
\end{equation}

\begin{example}
The direct product $G \times H$ of groups $G$, $H$, is a special case of $G \rtimes_{\theta} H$ where $\theta$ is the trivial homomorphism.

The dihedral group $D_n$, which we know is generated by a single rotation $g$ and a single reflection $h$, is isomorphic to $\Z_n \rtimes_\theta \Z_2$, where $\theta(0)(k) = k$ and $\theta(1)(k) = -k$. The element $g^k h^l$ corresponds to the pair $(k,l)$.
\end{example}

\begin{definition}
Let $X$ be a set, $\Z^{(X)}$ be the free abelian group generated by $X$, and $Sym(X)$ be the group of bijections $X \to X$. Then let $M_X$ denote the semidirect product $\Z^{(X)} \rtimes Sym(X)$ where $Sym(X)$ acts on $\Z^{(X)}$ by:
\[ \theta \left( \sum n_x x \right) = \sum n_x \theta(x). \]
\end{definition}

\begin{proposition}[{\cite[Prop. 2.3]{ess}}]
Let $(X,r)$ be a non-degenerate involutive solution to the YBE, with $r(x,y) = (\lambda_x(y), \tau_y(x))$. Then the map $\phi:X \to M_X$ given by
\begin{equation}
\phi(x) = (x,\lambda_x),
\end{equation}
extends to a group homomorphism $\phi:G_X \to M_X$.
\end{proposition}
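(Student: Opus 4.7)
The plan is to invoke the universal property of the structure group $G_X$: since $G_X$ is the group generated by $X$ modulo the relations $xy = wz$ whenever $r(x,y) = (w,z)$, it suffices to define $\phi(x) = (x, \lambda_x)$ on generators and verify that the images satisfy these defining relations in $M_X$. So, fixing $x,y \in X$ with $r(x,y) = (w,z) = (\lambda_x(y), \tau_y(x))$, I need to compute both sides of the equality $\phi(x)\phi(y) = \phi(w)\phi(z)$ using the semidirect product rule (4.3) and confirm they agree.

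First I would expand the left-hand side: $\phi(x)\phi(y) = (x, \lambda_x)(y,\lambda_y) = (x + \lambda_x(y), \lambda_x \lambda_y)$. Then the right-hand side becomes $\phi(w)\phi(z) = (\lambda_x(y), \lambda_{\lambda_x(y)})(\tau_y(x), \lambda_{\tau_y(x)}) = \bigl(\lambda_x(y) + \lambda_{\lambda_x(y)}(\tau_y(x)),\ \lambda_{\lambda_x(y)} \lambda_{\tau_y(x)}\bigr)$. Comparing the two coordinates splits the task into two independent checks.

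For the second coordinate, the identity $\lambda_x \lambda_y = \lambda_{\lambda_x(y)} \lambda_{\tau_y(x)}$ is exactly the content of proposition 4.2.3(i): since $x \mapsto \lambda_x$ extends to a left action of $G_X$ on $X$, it must respect the defining relations of $G_X$, which gives precisely this equality. For the first coordinate, since $\Z^{(X)}$ is abelian we may rewrite the required identity as $x = \lambda_{\lambda_x(y)}(\tau_y(x))$, and this is immediate from lemma 2.1.4: involutivity of $r$ is equivalent to $\tau_y(x) = \lambda_{\lambda_x(y)}^{-1}(x)$, which upon applying $\lambda_{\lambda_x(y)}$ yields exactly $x$.

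The proof involves no real obstacle; the only subtlety worth flagging is that the involutivity hypothesis on $r$ is genuinely used (and only used) in the first coordinate computation, while non-degeneracy enters via proposition 4.2.3 to handle the second. Once both coordinates match, the universal property of $G_X$ guarantees that $\phi$ extends uniquely to a group homomorphism $G_X \to M_X$, completing the argument.
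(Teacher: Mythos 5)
Your proposal is correct and follows essentially the same route as the paper: both verify the defining relation $\phi(x)\phi(y)=\phi(w)\phi(z)$ in $M_X$ by using $\lambda_x\lambda_y=\lambda_w\lambda_z$ (the left-action property, which is Proposition 4.2.2(i) in this paper, not 4.2.3) for the $Sym(X)$ component and involutivity via Lemma 2.1.4 for the $\Z^{(X)}$ component. The only cosmetic difference is that you compute both sides and compare coordinates, whereas the paper transforms the left-hand side into the right-hand side in a single chain.
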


\begin{proof}
Since $r$ is involutive we have $\tau_y(x) = \lambda_{\lambda_x(y)}^{-1}(x)$. We need to show that whenever $r(x,y) = (w,z)$ we have:
\[ \phi(x)\phi(y) = (x,\lambda_x)(y,\lambda_y) = (w,\lambda_w)(z, \lambda_z) = \phi(w)\phi(z) . \]

By above, we have $w = \lambda_x(y)$ and $z = \lambda_{\lambda_x(y)}^{-1}(x) = \lambda_w^{-1}(x)$, and by proposition 4.2.2 we have $\lambda_x \lambda_y = \lambda_w \lambda_z$. Thus using 4.3 we calculate:
\begin{align*}
(x,\lambda_x)(y,\lambda_y) &= (x + \lambda_x(y), \lambda_x \lambda_y)\\
&= (x + w, \lambda_w \lambda_z)\\
&= (w + \lambda_w(\lambda_w^{-1}(x)), \lambda_w \lambda_z)\\
&= (w + \lambda_w(z), \lambda_w \lambda_z)\\
&= (w, \lambda_w)(z, \lambda_z).
\end{align*}
\end{proof}

Next consider the map $\pi = p_1 \circ \phi:G_X \to \Z^{(X)}$, where $p_1:\Z^{(X)} \rtimes Sym(X) \to \Z^{(X)}$ projects onto the first coordinate. This means that $\pi(g) = t$ where $\phi(g) = (t,\gamma)$ for some $\gamma \in Sym(X)$.

\begin{lemma}[{\cite[Prop. 2.5(b)]{ess}}]
The map $\pi$ is a bijection.
\end{lemma}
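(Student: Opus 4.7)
The plan is to use the 1-cocycle identity
\[ \pi(gh) = \pi(g) + \gamma_g(\pi(h)) \]
which follows from $\phi$ being a group homomorphism into the semidirect product, where $\gamma_g = p_2(\phi(g))$ acts on $\Z^{(X)}$ by extending its permutation of $X$ linearly. This single structural identity carries all of the work below, together with the corresponding formula $\pi(g^{-1}) = -\gamma_g^{-1}(\pi(g))$ for inverses.

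For surjectivity I would induct on the length $|t|=\sum_x |n_x|$ of an element $t=\sum n_x x\in\Z^{(X)}$, with base case $\pi(e)=0$. For the inductive step, given $g$ with $\phi(g)=(t',\gamma)$, right-multiplying by the generator $\gamma^{-1}(x)\in X$ gives $\phi(g\cdot\gamma^{-1}(x))=(t'+x,\,\gamma\lambda_{\gamma^{-1}(x)})$, hitting $t'+x$. Decrements by $x\in X$ are obtained by right-multiplying by $y^{-1}$ for a generator $y$ chosen, via the non-degeneracy of $(X,r)$, so that $\gamma\lambda_y^{-1}(y)=x$. Iterating yields every element of $\Z^{(X)}$.

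Injectivity is the main obstacle. My approach is to exploit the right brace structure on $\Z^{(X)}$ given by proposition 3.3.8 (the non-degenerate cycle set $X$ extends to a linear cycle set on $\Z^{(X)}$) together with theorem 3.4.3 (linear cycle sets are right braces), whose adjoint group $(\Z^{(X)},\circ)$ has the same underlying set. I claim the inclusion $X\hookrightarrow\Z^{(X)}$ extends to a group homomorphism $\alpha\colon G_X\to(\Z^{(X)},\circ)$: the defining relation $xy=wz$ of $G_X$ (for $r(x,y)=(w,z)$) translates, under $a\circ b = a^b+b = \tau_b(a)+b$, into the identity $\tau_y(x)+y = \tau_{\tau_y(x)}(\lambda_x(y))+\tau_y(x)$, which is exactly the unitary axiom ${({}^x y)}^{(x^y)}=y$ rewritten. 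Once $\alpha$ is established as a group isomorphism, it delivers a set-theoretic bijection $G_X\to\Z^{(X)}$; comparing $\pi$ and $\alpha$ (both extend $\id_X$ and are determined on products by the same cocycle-type law) then forces $\pi$ to be bijective as well.

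The hardest step is showing $\alpha$ is injective, equivalently, that the only element $G_X$ maps to $0\in\Z^{(X)}$ is the identity. The natural route is to construct the inverse $\Z^{(X)}\to G_X$ directly by selecting a canonical word representative of each $t\in\Z^{(X)}$ and verifying that any two words yielding the same $t$ are equivalent under the defining relations of $G_X$. This amounts to a normal-form/diamond-lemma style rewriting argument on words, and is the technical core of the original proof in \cite{ess}.
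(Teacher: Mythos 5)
The paper does not actually prove this lemma --- it defers entirely to pages 10--11 of \cite{ess} --- so the comparison is between your sketch and the ESS argument it cites. Your surjectivity half is essentially sound: the cocycle identity $\pi(gh)=\pi(g)+\gamma_g(\pi(h))$ is correct, increments of $\pi$-values come from right-multiplying by the generator $\gamma^{-1}(x)$, and the decrement step works because the map $y\mapsto\lambda_y^{-1}(y)$ is a bijection, which is precisely the non-degeneracy of the associated cycle set (bijectivity of $y\mapsto y\cdot y$, Proposition 3.2.6); you should say this explicitly rather than gesturing at ``non-degeneracy of $(X,r)$'', but the ingredient is the right one.

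The genuine gap is injectivity, and your reduction does not close it. Transferring the problem to a homomorphism $\alpha\colon G_X\to(\Z^{(X)},\circ)$ built from the right-brace structure on the linear extension only relocates the difficulty: proving $\alpha$ injective requires showing that any two words in the generators of $G_X$ mapping to the same element of $\Z^{(X)}$ are equal modulo the defining relations, which is word-for-word the same problem as injectivity of $\pi$, and you explicitly defer it to a ``normal-form/diamond-lemma style rewriting argument'' that you do not carry out. So no proof of the hard half is actually supplied. Beware also of circularity: the paper's identification of the adjoint group of $\Z^{(X)}$ with $G_X$ (Proposition 4.2.10) is \emph{derived from} this lemma, so it cannot be invoked here; you would have to re-establish its content independently, which lands you back at the same rewriting argument. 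A smaller but real error: the formula $a\circ b=\tau_b(a)+b$ mixes the QYBE and YBE conventions. With the cycle set $x\cdot y=\lambda_x^{-1}(y)$ attached to $(X,r)$ one has $a^b=\sigma_b^{-1}(a)=\lambda_b(a)$, and the structure-group relation $xy=wz$ corresponds to $y\circ x=z\circ w$ in the right-brace adjoint group (an anti-homomorphism), exactly as in Proposition 4.2.10; the identity you reduce to is still a consequence of the unitary/involutivity axiom, but the verification as written does not go through without reordering.
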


\begin{proof}
This is a very long proof, given in pages 10-11 of \cite{ess}.
\end{proof}

This lemma allows us to give a very useful formulation of $G_X$ in terms of $M_X$, since it says that for every $x \in G_X$, there is exactly one $t \in \Z^{(X)}$ such that $\phi(x) = (t, \gamma)$ for some $\gamma \in Sym(X)$, and that for every $t \in \Z^{(X)}$ there is some $(t, \gamma) \in \phi(G_X)$.

\begin{theorem}[{\cite[Prop. 2.4,2.5]{ess}}]
Let $(X,r)$ be a non-degenerate involutive solution to the YBE, with $r(x,y) = (\lambda_x(y), \tau_y(x))$.
\begin{enumerate}
\item[(i)] The structure group $G_X$ is isomorphic to the subgroup $\phi(G_X) \leq M_X$, which is generated by the set $S = \{(x, \lambda_x) \,:\, x \in X\}$.
\item[(ii)] The map $X \to Sym_X$ given by $x \mapsto \lambda_x$ for $x \in X$ extends to a map $\Z^{(X)} \to Sym_X$ so that we have:
\[ \phi(G_X) = \{(t, \lambda_t)\,: \, t \in \Z^{(X)}\} \subset M_X. \]
\end{enumerate}
\end{theorem}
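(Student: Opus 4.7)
The plan is to get both parts more or less for free from the preceding lemma: once we know that $\pi = p_1 \circ \phi$ is a bijection $G_X \to \Z^{(X)}$, essentially everything else is formal.

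First I would establish part (i). Proposition 4.2.5 already tells us $\phi : G_X \to M_X$ is a group homomorphism, so $\phi(G_X)$ is a subgroup of $M_X$. Since $G_X$ is generated by the image of $X$ in $G_X$, its image $\phi(G_X)$ is generated by $\phi(X) = S = \{(x,\lambda_x) : x \in X\}$. The only non-formal point is injectivity of $\phi$, and this is immediate from Lemma 4.2.6: if $\phi(g) = \phi(g')$ then applying $p_1$ gives $\pi(g) = \pi(g')$, and $\pi$ is injective, so $g = g'$. Hence $\phi$ restricts to an isomorphism $G_X \to \phi(G_X)$.

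For part (ii) I would use the bijectivity of $\pi$ in the other direction to define the extension. Given $t \in \Z^{(X)}$, by surjectivity of $\pi$ there exists $g \in G_X$ with $\pi(g) = t$; write $\phi(g) = (t, \gamma)$ for some $\gamma \in Sym(X)$. The pair $(t,\gamma)$ is determined by $g$, and by injectivity of $\pi$ there is exactly one such $g$, so $\gamma$ depends only on $t$. I would therefore \emph{define} $\lambda_t := \gamma$. When $t = x \in X$ we have $\phi(x) = (x,\lambda_x)$, so this definition agrees with the original $\lambda_x$ on $X$, giving the desired extension $\Z^{(X)} \to Sym(X)$. With this definition in hand,
\[
\phi(G_X) = \{\phi(g) : g \in G_X\} = \{(\pi(g), \lambda_{\pi(g)}) : g \in G_X\} = \{(t,\lambda_t) : t \in \Z^{(X)}\},
\]
where the last equality uses surjectivity of $\pi$.

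The hard part has been entirely outsourced to the previous lemma; both statements are essentially bookkeeping built on top of the bijection $\pi$. The only subtlety worth flagging is that the "extension" $\Z^{(X)} \to Sym(X)$ in part (ii) is only a map of sets, not a group homomorphism (indeed $Sym(X)$ is typically non-abelian, and the multiplicativity of $\phi$ yields the twisted identity $\lambda_{s + \lambda_s(t)} = \lambda_s \lambda_t$ rather than $\lambda_{s+t} = \lambda_s \lambda_t$); no further compatibility is claimed or needed.
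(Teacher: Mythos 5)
Your proposal is correct and follows essentially the same route as the paper: both parts are read off directly from the bijectivity of $\pi = p_1 \circ \phi$ established in Lemma 4.2.6, with $\phi$ injective giving (i) and $\lambda_t$ defined via $\phi(\pi^{-1}(t)) = (t,\lambda_t)$ giving (ii). Your closing remark that the extension is only a map of sets satisfying $\lambda_{s+\lambda_s(t)} = \lambda_s\lambda_t$ is a accurate and useful observation that the paper exploits in the subsequent theorem.
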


\begin{proof}
(i) Since $\pi = p_1 \circ \phi$ is a bijection, we must have that $p_1$ is surjective and $\phi$ is injective. Thus $G_X \cong \phi(G_X)$. Since $X$ generates $G_X$, $\phi(X) = S$ generates $\phi(G_X)$.

(ii) Since $\pi$ is a bijection, for $t \in \Z^{(X)}$ we can define $\lambda_t$ by $\phi(\pi^{-1}(t)) = (t, \lambda_t)$, and every element of $\phi(G_X)$ can be written uniquely in this form.
\end{proof}

The results in this section so far were published long before the invention of braces and cycle sets. In more recent years, mathematicians have taken advantage of the representation of $G_X$ given to us by theorem 4.2.7 to show that $G_X$ can be given the structure of a brace. The result is implicit in \cite{ru2}, however we follow the proof given by Ced\'o, Jespers and Okni\'nski in \cite{cjo}.

\begin{theorem}[{\cite[Thm. 4.4]{cjo}}]
Let $(X,r)$ be a non-degenerate involutive solution to the YBE. The structure group $G_X$ is the adjoint group of a brace with additive group $\Z^{(X)}$.
\end{theorem}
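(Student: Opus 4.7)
The plan is to apply Theorem 4.2.7, which identifies $G_X$ with $\Z^{(X)}$ via the bijection $\pi$, to transport the group structure of $G_X$ along $\pi$ onto $\Z^{(X)}$. This will produce a circle operation $\circ$ on $\Z^{(X)}$, and since $(\Z^{(X)},+)$ is already an abelian group and $(\Z^{(X)},\circ) \cong G_X$ is a group by transport, the only remaining work will be to verify the left brace distributivity axiom.

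First I would compute the transported operation explicitly. Using $\phi(\pi^{-1}(s)) = (s,\lambda_s)$ from Theorem 4.2.7, together with the semidirect product multiplication formula (4.3), the product $(s,\lambda_s)(t,\lambda_t)$ in $M_X$ has first coordinate $s + \lambda_s(t)$. Since $\phi$ is a homomorphism and $\phi(G_X) = \{(u,\lambda_u) : u \in \Z^{(X)}\}$, this product lies in $\phi(G_X)$, so the transported group operation is
\[ s \circ t = s + \lambda_s(t), \]
with identity element $0$ (because $\pi^{-1}(0)$ must be the identity of $G_X$, forcing $\lambda_0 = \id$) and inverses inherited by transport.

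Next, substituting this formula into the left brace axiom $a \circ (b+c) + a = a \circ b + a \circ c$ reduces it to the linearity condition $\lambda_a(b+c) = \lambda_a(b) + \lambda_a(c)$. This is immediate from Definition 4.2.4, since the action of $Sym(X)$ on $\Z^{(X)}$ is by definition the linear extension of its action on $X$, so every $\lambda_a$ is an automorphism of $(\Z^{(X)},+)$. It then follows that $(\Z^{(X)},+,\circ)$ is a left brace whose adjoint group is $G_X$.

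There is in fact no serious obstacle: the heavy lifting has already been done in Theorem 4.2.7. The only conceptual point worth flagging is that the semidirect product structure of $M_X$ essentially encodes the brace structure for free, with the first coordinate furnishing addition and the twisted product furnishing $\circ$. Once this is recognised, the theorem follows almost formally from the preceding results, with no further non-trivial computation required.
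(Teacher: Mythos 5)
Your proposal is correct and follows essentially the same route as the paper: identify $G_X$ with $\{(t,\lambda_t) : t \in \Z^{(X)}\}$ via Theorem 4.2.7, read off $a \circ b = a + \lambda_a(b)$ from the semidirect product formula (4.3), and reduce the left brace axiom to the additivity $\lambda_a(b+c) = \lambda_a(b) + \lambda_a(c)$, which holds by the linear action of $Sym(X)$ on $\Z^{(X)}$ in Definition 4.2.4. No gaps.
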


\begin{proof}
By the above we can identify $G_X$ with $\{(t,\lambda_t)\,:\, t \in \Z^{(X)}\} \subset M_X$. Defining addition on $G_X$ by:
\[ (a, \lambda_a) + (b, \lambda_b) = (a+b, \lambda_{a+b}), \]
it is clear that the additive group is isomorphic to $\Z^{(X)}$. Since every element of $G_X$ is of the form $(t, \lambda_t)$ and $(a, \lambda_a)(b,\lambda_b) = (a + \lambda_a(b), \lambda_a \lambda_b)$, we have $\lambda_a \lambda_b = \lambda_{a + \lambda_a(b)}$. Thus we have $(a, \lambda_a) \circ (b,\lambda_b) = (a + \lambda_a(b), \lambda_{a + \lambda_a(b)})$, which we need to verify satisfies 2.6:
\begin{align*}
(a, \lambda_a) \circ ((b,\lambda_b) + (c,\lambda_c)) + (a,\lambda_a) &= (a, \lambda_a) \circ (b + c, \lambda_{b+c}) + (a,\lambda_a)\\
&= (a + \lambda_a(b + c), \lambda_{a + \lambda_a(b+c)}) + (a,\lambda_a)\\
&= (a + \lambda_a(b) + a + \lambda_a(c), \lambda_{a + \lambda_a(b) + a + \lambda_a(c)})\\
&= (a + \lambda_a(b), \lambda_{a + \lambda_a(b)}) + (a + \lambda_a(c), \lambda_{a + \lambda_a(c)})\\
&= (a, \lambda_a) \circ (b,\lambda_b) + (a, \lambda_a) \circ (c,\lambda_c).
\end{align*}
\end{proof}

Next we give the second part of \cite[Thm. 4.4]{cjo} with a novel, greatly simplified proof.

\begin{corollary}
The restriction to $X$ of the solution associated to the brace $G_X$ (as given in theorem 2.2.8) is exactly the solution $(X,r)$ of which $G_X$ is the structure group.
\end{corollary}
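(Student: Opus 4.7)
The plan is to identify $X$ with its image $\hat{X} = \{\hat{x} : x \in X\} \subset G_X$ under $\phi$, where $\hat{x} = (x, \lambda_x)$, and then show that the solution $r_{G_X}$ produced by theorem 2.2.8 from the brace structure on $G_X$ satisfies $r_{G_X}(\hat{x}, \hat{y}) = (\widehat{\lambda_x(y)}, \widehat{\tau_y(x)})$ for all $x, y \in X$. Both coordinates will then follow from a direct unpacking of the brace's lambda map using the explicit formulas for $+$ and $\circ$ on $G_X$ established in the proof of theorem 4.2.8.

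First I would compute $\lambda^{G_X}_{\hat{x}}(\hat{y}) = \hat{x} \circ \hat{y} - \hat{x}$. By theorem 4.2.8 we have $\hat{x} \circ \hat{y} = (x + \lambda_x(y), \lambda_{x + \lambda_x(y)})$, and the additive structure is $(a, \lambda_a) + (b, \lambda_b) = (a + b, \lambda_{a+b})$, so a routine subtraction gives
\[
\lambda^{G_X}_{\hat{x}}(\hat{y}) = (\lambda_x(y), \lambda_{\lambda_x(y)}) = \widehat{\lambda_x(y)}.
\]
Non-degeneracy of $(X, r)$ guarantees $\lambda_x(y) \in X$, so this image lies in $\hat{X}$ and matches the first coordinate of $r(x,y)$ under the identification $\phi$.

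For the second coordinate I need $(\lambda^{G_X}_{\widehat{\lambda_x(y)}})^{-1}(\hat{x})$. Rather than invert abstractly, I would guess and check: setting $z = \lambda_x(y)$ and $w = \lambda_z^{-1}(x)$, the formula just derived yields $\lambda^{G_X}_{\hat{z}}(\hat{w}) = \widehat{\lambda_z(w)} = \hat{x}$, so $(\lambda^{G_X}_{\hat{z}})^{-1}(\hat{x}) = \hat{w}$. Lemma 2.1.4 then gives $w = \lambda_{\lambda_x(y)}^{-1}(x) = \tau_y(x)$, so the second coordinate is $\widehat{\tau_y(x)}$. Combining both coordinates, the restriction of $r_{G_X}$ to $\hat{X} \times \hat{X}$ agrees with $r$ under $\phi$.

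The proof is essentially bookkeeping, so I do not expect a serious obstacle. The only thing to be careful about is that subtraction in $G_X$ acts componentwise on the $\Z^{(X)}$ coordinate (immediate from the additive formula in theorem 4.2.8), and the one non-formal ingredient is the involutive identity $\tau_y(x) = \lambda_{\lambda_x(y)}^{-1}(x)$ from lemma 2.1.4, which is precisely what forces the second coordinate of $r_{G_X}$, computed purely from the brace structure, to match the $\tau_y(x)$ coming from the original solution.
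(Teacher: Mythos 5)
Your proposal is correct and follows essentially the same route as the paper: both compute $\hat{x}\circ\hat{y}-\hat{x}$ explicitly inside $M_X$ using the formulas from theorem 4.2.8 to see that the brace lambda map restricted to $X$ is the solution's $\lambda_x$. The only difference is that you spell out the verification of the second coordinate via lemma 2.1.4, which the paper leaves implicit since both solutions are already written in the same functional form in terms of their lambda maps.
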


\begin{proof}
The original solution was $r(x,y) = (\lambda_x(y), \lambda_{\lambda_x(y)}^{-1}(x))$. The claim then, is that $\lambda_x(y) = x \circ y - x$ (applying the circle operation in $G_X$). Embedding $G_X$ into $M_X$ we have:
\[ (x, \lambda_x) \circ (y,\lambda_y) - (x,\lambda_x) = (x + \lambda_x(y), \lambda_{x + \lambda_x(y)} ) - (x, \lambda_x) = (\lambda_x(y), \lambda_{\lambda_x(y)}) \]
as required.
\end{proof}

In section 3.3 we saw that a non-degenerate cycle set $X$ admits a linear extension to $\Z^{(X)}$. Since linear cycle sets are equivalent to braces, this means we now have two constructions of brace structures on $\Z^{(X)}$. We now see that they are in fact the same, and as far as we are aware we are the first to explicitely relate these results (in \cite{ru4} Rump shows that the adjoint group of $\Z^{(X)}$ is isomorphic to $G_X$, but he uses a completely independent method from here).

\begin{proposition}
Let $X$ be a cycle set with associated QYBE solution $(X,R)$. The linear extension $\Z^{(X)}$ of a cycle set $X$ has adjoint group $G_{X}$, the structure group of the YBE solution $(X, r) = (X,Rp)$ (given by lemma 3.2.9).
\end{proposition}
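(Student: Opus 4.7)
The plan is to realize both $G_X$ and the adjoint group of the brace on $\Z^{(X)}$ as subgroups of the common ambient group $M_X$ introduced in definition 4.2.4. By theorem 3.4.3 applied to the linear cycle set $\Z^{(X)}$ (which is a linear cycle set by proposition 3.3.6), we obtain a right brace $(\Z^{(X)}, +, \circ)$ with $a \circ b = a^b + b$. Extending the cycle-set action to $\Z^{(X)}$ via theorem 3.3.4 gives $\sigma_a \in G(X)$ for each $a \in \Z^{(X)}$, and I would introduce the map $\mu \colon \Z^{(X)} \to M_X$ defined by $\mu(t) = (t, \sigma_t^{-1})$. I aim to show that $\mu$ restricts to an isomorphism of groups $(\Z^{(X)}, \circ^{op}) \xrightarrow{\sim} G_X$.

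The crux of the argument is verifying that $\mu$ is a group homomorphism from $(\Z^{(X)}, \circ^{op})$. Unwinding the definitions in $M_X$,
\[
\mu(a)\mu(b) = \bigl(a + \sigma_a^{-1}(b),\; \sigma_a^{-1}\sigma_b^{-1}\bigr) = \bigl(b^a + a,\; \sigma_a^{-1}\sigma_b^{-1}\bigr),
\]
while $\mu(a \circ^{op} b) = \mu(b \circ a) = (b^a + a,\, \sigma_{b^a + a}^{-1})$. Thus the homomorphism property reduces to the cocycle identity $\sigma_{a + \sigma_a^{-1}(b)} = \sigma_b \sigma_a$, which I would derive from cycle-group axiom 3.15: $1 + (a + \sigma_a^{-1}(b)) = \sigma_a + \sigma_a^{-1}(b)$, and then the definition of the cycle-group $+$-map gives $(\sigma_a + \sigma_a^{-1}(b))(y) = \sigma_a(\sigma_a^{-1}(b)) \cdot \sigma_a(y) = \sigma_b \sigma_a(y)$. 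Injectivity of $\mu$ is immediate from the first coordinate, and $\mu(0) = (0, \id)$ by lemma 3.3.7.

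To identify $\mathrm{im}(\mu)$ with $G_X$, I would observe that $\mu(x) = (x, \sigma_x^{-1}) = (x, \lambda_x)$ for $x \in X$, because the YBE solution $r = Rp$ satisfies $\lambda_x(y) = y^x = \sigma_x^{-1}(y)$ on $X$. Hence $\mathrm{im}(\mu)$ is a subgroup of $M_X$ containing all generators of $G_X$, so $\mathrm{im}(\mu) \supseteq G_X$. By theorem 4.2.7(ii), both $G_X = \{(t, \lambda_t) : t \in \Z^{(X)}\}$ and $\mathrm{im}(\mu) = \{(t, \sigma_t^{-1}) : t \in \Z^{(X)}\}$ project bijectively onto $\Z^{(X)}$ via the first coordinate, so the inclusion forces equality (and as a bonus $\sigma_t^{-1} = \lambda_t$ for every $t$). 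Thus $\mu$ is an isomorphism $(\Z^{(X)}, \circ^{op}) \cong G_X$, and since every group is isomorphic to its opposite via $g \mapsto g^{-1}$, we conclude $(\Z^{(X)}, \circ) \cong G_X$ as required. The main obstacle is the cocycle identity, which is really the substance of the proposition, but once the correct ambient group $M_X$ is chosen it reduces cleanly to one application of the cycle-group axioms.
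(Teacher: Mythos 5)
Your proof is correct, but it takes a genuinely different route from the paper's. The paper also rests on the additive bijection $\pi\colon G_X\to\Z^{(X)}$ of lemma 4.2.6, but rather than building an explicit embedding it simply checks that the adjoint multiplication $a\circ b=a^b+b$ satisfies the defining relations 4.2 of the structure group (adapted to the right-brace convention: $a\circ b=c\circ d$ when $r(b,a)=(d,c)$); the entire multiplicative content there is the one-line computation $(a^b\cdot b)\circ(a^b)=(a^b\cdot b)^{a^b}+a^b=b+a^b=a\circ b$, using only the cycle-set identity $(x\cdot y)^x=y$. You instead realise the adjoint group concretely as $\{(t,\sigma_t^{-1}):t\in\Z^{(X)}\}\subset M_X$ and reduce everything to the cocycle identity $\sigma_{a+\sigma_a^{-1}(b)}=\sigma_b\sigma_a$ --- which, note, is precisely the identity $(*)$, $1+a+b=(1+a\cdot b)(1+a)$, already established in the proof of proposition 3.3.8, so you could cite it rather than rederive it (your rederivation silently applies 3.10 with second argument in $\Z^{(X)}$ rather than $X$; this is valid on the linear extension but deserves a sentence of justification). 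Your route buys three things: the isomorphism is made explicit; the passage from ``the relations hold'' to ``$\pi$ is a group isomorphism'', which the paper treats rather briskly, is handled cleanly by the projection argument inside $M_X$; and you get the identity $\lambda_t=\sigma_t^{-1}$ for all $t\in\Z^{(X)}$ as a by-product. The paper's route buys brevity. Both arguments in truth produce an isomorphism onto $(\Z^{(X)},\circ^{op})$ and then pass to $\circ$ via $g\mapsto g^{-1}$; you at least make that last step explicit, which the paper does not.
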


\begin{proof}
The solution to the QYBE $(X, R)$ for the cycle set $X$ is given by:
\[ R(x,y) = (x^y, x^y \cdot y), \]
and so the corresponding solution to the YBE is
\[ r(x,y) = Rp(x,y) = R(y,x) = (y^x, y^x \cdot x). \]

Let $G_{X}$ denote the structure group of this solution. Since we have an isomorphism of additive groups $\pi:G_{X} \to \Z^{(X)}$ (lemma 4.2.6), it suffices to show that $(\Z^{(X)}, \circ)$ satisfies the relation 4.2 (since this means that $\pi$ is a homomorphism of adjoint groups, and therefore an isomorphism). The adjoint multiplication on $\Z^{(X)}$ is given by $a \circ b = a^b + b$. This gives $\Z^{(X)}$ a right brace structure while $G_{X}$ is a left brace, so we adapt 4.2 to:
\[ a \circ b = c \circ d \quad \text{ when } \quad r(b,a) = (d,c). \]

Thus we need to prove that $a \circ b = (a^b \cdot b) \circ (a^b)$. The left hand side is just $a^b + b$, and the right hand side is $(a^b \cdot b)^{a^b} + a^b = b + a^b$.
\end{proof}

\section{Constructing Cycle Sets}

After seeing many results from Rump on cycle sets and how they are useful for proving results in brace theory, we now explore an application of cycle sets outside of brace theory. We follow a method developed by Castelli, Pinto and Rump \cite{cpr} of constructing finite, indecomposable, involutive, non-degenerate set-theoretic solutions to the Yang-Baxter equation with a prime-power number of elements and cyclic permutation group. We decided to focus on this construction, as it highlights the fact that by using cycle sets we can construct solutions which we would not have been able to obtain from braces. For more constructions of finite braces see \cite{ru3}.

\subsection{Indecomposable Cycle Sets}

We first recall some of the basic properties of cycle sets and non-degenerate, involutive solutions. 

If $(X, \cdot)$ is a non degenerate cycle set, the map r from $X \times X \rightarrow X \times X$ given by $r(x,y)  :=  (\lambda_x(y), \tau_y(x))$, where $\lambda_x(y) \ := \sigma_x^{-1}(y)$ and $\tau_y(x) :=  \lambda_x(y) \cdot x$, is a non-degenerate involutive solution. Conversely, if $(X,r)$ is a non-degenerate involutive solution, the binary operation $\cdot$ given by $x \cdot y := \lambda_x^{-1}(y)$ for all $x,y \in X$ gives rise to a non-degenerate cycle set. 

\begin{definition}
A cycle set $(X, \cdot)$ is said to be \emph{retractable} if $|X| = 1$ or if there exists two distinct elements $x,y \in X$ such that $\sigma_x = \sigma_y$. (It is clear that a cycle set $X$ is retractable if and only if it is not irretractable in the sense of definition 3.3.11.)
\end{definition}

\begin{definition}
A non-degenerate cycle set $(X, \cdot)$ is called \emph{multipermutational of level m}, if m is the minimal non negative integer such that $\sigma^m(X)$ has cardinality one, where
\begin{equation}
\sigma^0(X) := X \ \ \text{and} \ \ \sigma^n(X) := \sigma(\sigma^{n-1}(X)), \ \ \text{for} \ \ n \geq 1
\end{equation}
\end{definition}

We recall that the permutation group $G(X)$ denotes the subgroup of $Sym(X)$ generated by the image of $\sigma:X \to Sym(X)$.

\begin{definition}
A subset $Y$ of $X$ is called \emph{$G(X)$-invariant} when $\sigma_x(Y) \subseteq Y$ for all $x \in X$
\end{definition}

The $G(X)$-invariant subsets of $X$ can also be thought of as unions of orbits of the action of $G(X)$ on $X$. Suppose $Y$ is $G(X)$-invariant and, $x \in Y$ and $y$ is in the orbit of $x$. Then there is some $z \in X$ such that $\sigma_z(x) = y$, and thus $y \in Y$. So $Y$ contains all the orbits of its elements. Conversely if $Y$ is the union of some orbits, then since the action of every element of $G(X)$ maps orbits to themselves, $Y$ is $G(X)$-invariant.

\begin{definition}
A non-degenerate cycle set $X$ is said to be \emph{decomposable} if there exists a partition $X = Y \sqcup Z$ such that $Y$ and $Z$ are non-empty $G(X)$-invariant subsets, and \emph{indecomposable} otherwise.
\end{definition}

\begin{proposition}
A cycle set $X$ is indecomposable if and only if $G(X)$ acts transitively on $X$.
\end{proposition}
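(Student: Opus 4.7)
The plan is to deduce this directly from the characterisation of $G(X)$-invariant subsets as unions of orbits, which the authors have already discussed in the paragraph preceding Definition 5.1.4. Once we have that characterisation in hand, each direction reduces to a one-line set-theoretic observation, so the strategy is to prove each direction by contrapositive.

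For the ``if'' direction, I would suppose $G(X)$ acts transitively on $X$. Then $X$ itself is the unique orbit, so the only $G(X)$-invariant subsets of $X$ are $\emptyset$ and $X$. In particular, any partition $X = Y \sqcup Z$ into two $G(X)$-invariant subsets must have one of $Y$, $Z$ empty, contradicting the definition of decomposability; hence $X$ is indecomposable.

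For the ``only if'' direction, I would argue the contrapositive: assume $G(X)$ does not act transitively on $X$. Then the partition of $X$ into $G(X)$-orbits contains at least two distinct orbits. Pick any orbit $Y \subsetneq X$ and let $Z := X \setminus Y$, which is a non-empty union of the remaining orbits. Both $Y$ and $Z$ are unions of orbits and hence $G(X)$-invariant by the characterisation recalled above, and they partition $X$ into two non-empty $G(X)$-invariant subsets. Therefore $X$ is decomposable.

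The only subtle point, rather than a genuine obstacle, is the equivalence between $G(X)$-invariance (as stated in Definition 5.1.3, using only the $\sigma_x$) and being a union of $G(X)$-orbits, which the authors have already asserted in the discussion just before Definition 5.1.4. Since both directions of the proof above only need this equivalence as a black box, no further calculation is required, and the entire proof is essentially a reformulation of ``orbits partition a set acted on by a group''.
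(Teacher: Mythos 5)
Your proof is correct and follows essentially the same route as the paper: both directions rest on the observation that $G(X)$-invariant subsets are unions of orbits, and the converse direction (splitting off one orbit $O_x$ from its complement) is identical to the paper's argument. The only cosmetic difference is in the forward direction, where the paper exhibits an explicit failure of invariance for an arbitrary partition rather than invoking the orbit characterisation wholesale.
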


\begin{proof}
Suppose $G(X)$ acts transitively on $X$, and suppose $X = Y \sqcup Z$ is a partition. Take $y \in Y$ and $z \in Z$: since $G(X)$ acts transitively there is some $\sigma_x$ such that $\sigma_x(y) = z$, and thus $Y$ is not $G(X)$-invariant. Hence there is no partition of $X$ into $G(X)$-invariant subsets, and so $X$ is indecomposable.

Conversely, suppose $G(X)$ does not act transitively on $X$, and take some $x \in X$. The orbit $O_x$ of $x$ is a $G(X)$-invariant subset, and since $X$ is partitioned into orbits, $X \setminus O_x$ is a union of orbits and therefore a $G(X)$-invariant subset. Since $G(X)$ does not act transitively, $O_x \subsetneq X$ and $X \setminus O_x \neq \emptyset$. Thus $X$ is decomposable as $O_x \sqcup X \setminus O_x$.
\end{proof}

For the remainder of the chapter we will be concerned with finite cycle sets, so non-degeneracy is automatic by theorem 3.3.14.

\begin{proposition}[{\cite[Prop. 1]{cpr}}]
Let $X$ be and indecomposable finite cycle set with $|X| > 1$, such that the permutation group $G(X)$ is abelian. Then $\sigma(X)$ is an indecomposable cycle set and $X$ is multipermutational.
\end{proposition}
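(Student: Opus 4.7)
The statement has two parts: that $\sigma(X)$ is indecomposable, and that $X$ is multipermutational. Both rely on $\sigma(X)$ being a non-degenerate cycle set, which follows from Theorem 3.3.14 (every finite cycle set is non-degenerate) and Proposition 3.3.11 (the retraction of a non-degenerate cycle set is a non-degenerate cycle set). The plan is first to transfer the transitivity of $G(X)$ to $G(\sigma(X))$ along a natural quotient map, and then to iterate, reducing the multipermutationality claim to a strict retraction inequality.

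For $\sigma(X)$ indecomposable, I would show that the surjective morphism $\sigma : X \to \sigma(X)$ induces a surjective group homomorphism $\tilde{\sigma} : G(X) \to G(\sigma(X))$ by sending the generator $\sigma_x \in G(X)$ to left multiplication by $\sigma_x$ on the cycle set $\sigma(X)$. The morphism identity $\sigma(\sigma_x(y)) = \sigma_x \cdot \sigma_y$ guarantees that two words in the generators $\sigma_x$ which agree on $X$ also agree after post-composition with $\sigma$, so they define the same permutation on $\sigma(X)$; thus $\tilde{\sigma}$ is well defined. Its image $G(\sigma(X))$ is abelian, as a quotient of an abelian group, and acts transitively on $\sigma(X)$ because $G(X)$ acts transitively on $X$, so Proposition 5.1.5 gives the indecomposability of $\sigma(X)$.

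For multipermutationality, I would iterate. The retraction $\sigma(X)$ is again an indecomposable finite cycle set with abelian permutation group, so the same reasoning applies to $\sigma^n(X)$ for every $n \geq 0$. Granting the strict inequality $|\sigma(Y)| < |Y|$ whenever $Y$ is an indecomposable finite cycle set with $|Y| > 1$ and abelian $G(Y)$, the sequence $|\sigma^n(X)|$ is strictly decreasing as long as its term exceeds one, and must therefore eventually reach one. The entire problem thus reduces to this strict inequality, that is, to showing that $\sigma : Y \to G(Y)$ fails to be injective for any such $Y$.

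The main obstacle is establishing this strict inequality. Suppose for contradiction that $\sigma$ is injective on $Y$. Because $G(Y)$ is abelian, transitive, and faithful on $Y$, all point stabilizers are conjugate in an abelian group hence equal, hence coincide with the trivial kernel of the action; the action is therefore regular and $|G(Y)| = |Y|$. Injectivity then makes $\sigma$ a bijection, so there is a unique $y^* \in Y$ with $\sigma_{y^*} = e$, equivalently $y^* \cdot z = z$ for all $z \in Y$. Substituting $x = y^*$ into the cycle set axiom $(x \cdot y)\cdot(x \cdot z) = (y \cdot x)\cdot(y \cdot z)$ yields $y \cdot z = \sigma_{y \cdot y^*}(y \cdot z)$, and since $\sigma_y$ is a bijection, $y \cdot z$ ranges over all of $Y$ as $z$ does. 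Hence $\sigma_{y \cdot y^*} = e = \sigma_{y^*}$, and injectivity of $\sigma$ forces $y \cdot y^* = y^*$ for every $y \in Y$. Thus $y^*$ is fixed by every $\sigma_y$, hence by all of $G(Y)$, contradicting transitivity on a set of size greater than one.
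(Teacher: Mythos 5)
Your proof is correct and follows essentially the same route as the paper: transfer transitivity to $G(\sigma(X))$ to get indecomposability, iterate the retraction to reduce multipermutationality to strict shrinking, and derive retractability by contradiction from the unique element $y^*$ with $\sigma_{y^*}=\mathrm{id}$ via the cycle set axiom. Your treatment is in fact slightly more careful than the paper's in making explicit the induced surjection $G(X)\to G(\sigma(X))$ (hence that $G(\sigma(X))$ is abelian), a point the paper leaves implicit when it iterates.
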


\begin{proof}
As $X$ is finite we have that $\sigma(X)$ is a non-degenerate cycle set. Take $\sigma_x, \sigma_y \in \sigma(X)$. Since $X$ is indecomposible, $G(X)$ acts transitively and so there exists $z$ such that $\sigma_z(x) = z \cdot x = y$. Thus $\sigma_z \cdot \sigma_x = \sigma_y$, meaning that $G(\sigma(X))$ acts transitively on $\sigma(X)$.

As $G(X)$ is abelian we have $x \cdot (y \cdot z) = \sigma_x \sigma_y (z) = \sigma_y \sigma_x (z) = y \cdot (x \cdot z)$, for all $x,y,z \in X$.

If we show that $X$ is retractable, the above guarantees that $\sigma(X)$ is retractable since it fits the criteria placed on $X$, and so on. Thus $|X| > |\sigma(X)| > |\sigma^2(X)| \cdots$ is a strictly decreasing sequence of integers, meaning there is some $n$ such that $|\sigma^n(X)| = 1$, so $X$ is multipermutational.

Suppose for contradiction that $X$ is irretractible. Since $G(X)$ is abelian and acts transitively on $X$, it can be thought of as a transitive abelian subgroup of $S_{|X|}$ (the symmetric group on $|X|$ elements). herefore $|G(X)| = |X|$. Since $X$ is irretractible, $\sigma_x \neq \sigma_y$ for all $x,y \in X$, and thus there exists a unique $x \in X$ such that $\sigma_x = \id$. If $y \in X$ then
\[ y \cdot z = (x \cdot y) \cdot (x \cdot z) = (y \cdot x) \cdot (y \cdot z)\]
for all $z \in X$. Since $\sigma_y;z \mapsto y \cdot z$ is a bijection, every element of $X$ can be written in the form $y \cdot z$ for some $z$, and therefore $\sigma_{(y \cdot x)} = \id = \sigma_x$ for all $y \in X$. Since $X$ is irretractible, it follows that $y \cdot x = x$ for all $y \in X$ and hence $G(X)$ doesn't act transitively, a contradiction. 
\end{proof}

\subsection{Cycle Sets of Order $p^k$}

We are now going focus on indecomposable, finite cycle sets of order $p^k$, with cyclic permutation group. We will give a general construction of such cycle sets, but we need a few more lemmas first. For the following results, $X$ is such a cycle set and $G(X) = \langle \varphi \rangle$.

\begin{lemma}[{\cite[Lem. 3]{cpr}}]
We have that $n = |\sigma(X)|$ is the least integer such that $\sigma_{\varphi^n(x)} = \sigma_x$ for every $x \in X$. 
\end{lemma}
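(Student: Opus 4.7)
The plan is to descend the congruence $x \sim y \iff \sigma_x = \sigma_y$ to a $\langle \varphi \rangle$-action on the quotient $X/{\sim}$, and then read off the claim from the structure of cyclic groups of order $p^k$. By lemma 3.3.9, $\sim$ is a congruence on $X$, so for any $x, x', y \in X$ with $x \sim x'$ we have $y \cdot x \sim y \cdot x'$, i.e.\ $\sigma_y(x) \sim \sigma_y(x')$. Hence each $\sigma_y$ descends to a well-defined bijection on $X/{\sim}$, and since the $\sigma_y$ generate $G(X)$, the whole group, and in particular $\varphi$, acts on $X/{\sim}$.

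Next, indecomposability combined with proposition 5.1.5 gives that $\langle \varphi \rangle$ acts transitively on $X$, and hence also on $X/{\sim}$. Since $G(X) = \langle \varphi \rangle$ sits faithfully inside $Sym(X)$, the orbit-stabilizer theorem applied to the transitive action on $X$ forces $|\langle \varphi \rangle| = |X| = p^k$. The quotient $X/{\sim}$ is in natural bijection with $\sigma(X)$, so it has exactly $n$ elements, and since $\langle \varphi \rangle$ acts transitively on this $n$-element set, the stabilizer of any class has index $n$ in $\langle \varphi \rangle$. In a cyclic group of order $p^k$ there is a unique subgroup of each index dividing $p^k$, and the subgroup of index $n$ is $\langle \varphi^n \rangle$; so the stabilizer of every class equals $\langle \varphi^n \rangle$.

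In particular $\varphi^n$ fixes every class, which is exactly $\sigma_{\varphi^n(x)} = \sigma_x$ for all $x \in X$. Conversely, any positive integer $m$ with $\sigma_{\varphi^m(x)} = \sigma_x$ for every $x$ puts $\varphi^m$ in every stabilizer, hence in $\langle \varphi^n \rangle$, giving $n \mid m$ and so $m \geq n$. The only step requiring genuine care is verifying that $\sim$ descends to a group action on $X/{\sim}$, which is immediate from lemma 3.3.9; everything after that is routine orbit-stabilizer bookkeeping in the cyclic group $\langle \varphi \rangle$.
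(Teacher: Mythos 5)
Your proof is correct, but it takes a genuinely different route from the paper. The paper works directly with the generators: writing $\varphi = \sigma_{y_1}\cdots\sigma_{y_j}$ and repeatedly applying the congruence of lemma 3.3.9, it proves by induction the shift property $\sigma_{\varphi^{n+k}(x)} = \sigma_{\varphi^k(x)}$, and then uses transitivity to conclude that the minimal period is independent of the base point and equals $|\sigma(X)|$. You instead observe that the congruence lets the whole of $G(X)$ descend to an action on the quotient $X/{\sim} \cong \sigma(X)$, and then finish by orbit--stabilizer bookkeeping in the cyclic group $\langle\varphi\rangle$ of order $p^k$. Your packaging is arguably cleaner: it delivers the divisibility $n \mid p^k$ as a by-product (this is exactly Corollary 5.2.2, which the paper has to prove separately by a B\'ezout argument), and the ``least $n$'' claim falls out of the uniqueness of subgroups of given index in a cyclic group rather than from a minimality argument. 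Two small points deserve a word more than you give them. First, to descend $\varphi$ you need not just the generators $\sigma_y$ but also their inverses to respect $\sim$; this is fine because each descended $\bar\sigma_y$ is a surjection of the finite set $X/{\sim}$, hence a bijection, so its inverse is also well defined on classes --- worth saying explicitly. Second, ``orbit--stabilizer forces $|\langle\varphi\rangle| = |X|$'' needs the point stabilizers to be trivial, which follows because in an abelian transitive action all point stabilizers coincide and hence lie in the kernel, which is trivial by faithfulness; the paper glosses this identically in Proposition 5.1.6 and Corollary 5.2.2, so you are at the same level of rigour, but it is the one step where ``orbit--stabilizer'' alone does not literally suffice.
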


\begin{proof}
Let $x \in X$, and let $n \in \N$ be such that $\sigma_{\varphi^n(x)} = \sigma_x$. First, we show that $\sigma_{\varphi^{n+k}(x)} = \sigma_{\varphi^k}(x)$ for every $k \in \N$. 

Let $y_1, \cdots,y_j \in X$ such that $\varphi = \sigma_{y_1}, \cdots, \sigma_{y_j}$. Then we have that,
\begin{align*}
\sigma_{\varphi^{n+1}(X)} &= \sigma_{y_1 \cdot (y_2 \cdot ( \cdots(y_j \cdot \varphi^n(x))\cdots)))} \\
&= \sigma_{y_1} \cdot (\cdots (\sigma_{y_j} \cdot \sigma_{\varphi^n(x)}) \cdots ) \\
&= \sigma_{y_1} \cdot (\cdots (\sigma_{y_s} \cdot \sigma_x) \cdots) \\
&= \sigma_{\varphi(x)}
\end{align*}
so the result holds for $k = 1$. Now suppose it holds for $k \leq K$. We have:
\begin{align*}
\sigma_{\varphi^{n+(K+1)}(X)} &= \sigma_{y_1 \cdot (y_2 \cdot ( \cdots(y_j \cdot \varphi^{n+K}(x))\cdots)))} \\
&= \sigma_{y_1} \cdot (\cdots (\sigma_{y_j} \cdot \sigma_{\varphi^{n+K}(x)}) \cdots ) \\
&= \sigma_{y_1} \cdot (\cdots (\sigma_{y_s} \cdot \sigma_{\varphi^K(x)}) \cdots) \\
&= \sigma_{\varphi^{K+1}(x)}
\end{align*}
where in the third inequality we used the inductive hypothesis. Since $G(X)$ acts transitively and $\varphi$ generates $G(X)$, this implies that $n^* := \min\{n \in \N : \sigma_{\varphi^n(x)} = \sigma_x \}$ does not depend on the choice of element $x$, and that $\sigma(X) = \{\sigma_x,\sigma_{\varphi(x)},\cdots, \sigma_{\varphi^{n^*-1}(x)}\}$.
\end{proof}

\begin{corollary}
$|\sigma(X)|$ divides $|X|$.
\end{corollary}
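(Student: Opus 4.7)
The plan is to extract the divisibility directly from the minimality characterization of $n := |\sigma(X)|$ established in Lemma 5.2.1 together with the transitive action of $\langle\varphi\rangle$ on $X$.

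First I would observe that since $X$ is indecomposable and $G(X) = \langle \varphi \rangle$ is generated by the single element $\varphi$, proposition 5.1.6 gives that $\langle \varphi \rangle$ acts transitively on $X$. Consequently, for any fixed $x \in X$, the orbit $\{x, \varphi(x), \varphi^2(x), \dots\}$ equals all of $X$, so $\varphi^{|X|}(x) = x$ for every $x \in X$. In particular, $\sigma_{\varphi^{|X|}(x)} = \sigma_x$ for every $x \in X$; that is, $|X|$ is one of the integers $n'$ for which the property "$\sigma_{\varphi^{n'}(x)} = \sigma_x$ for all $x$" holds.

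The core of the argument is then a standard minimality/Euclidean-division trick. By Lemma 5.2.1 the minimal such integer is $n = |\sigma(X)|$, and the proof of that lemma gave the key relation $\sigma_{\varphi^{n+k}(x)} = \sigma_{\varphi^k(x)}$ for all $k \in \N$. Iterating this relation yields $\sigma_{\varphi^{qn+k}(x)} = \sigma_{\varphi^k(x)}$ for every $q \in \N$ and every $k \in \N$. Writing $|X| = qn + r$ with $0 \le r < n$, I would apply this with $k = r$ to get
\[ \sigma_x = \sigma_{\varphi^{|X|}(x)} = \sigma_{\varphi^{qn+r}(x)} = \sigma_{\varphi^r(x)} \]
for every $x \in X$. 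Since $n$ is the \emph{least} positive integer with this property and $r < n$, we must have $r = 0$, i.e. $n$ divides $|X|$.

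There is no real obstacle here; the only subtlety is making sure the minimality in Lemma 5.2.1 is applied correctly and that the identity $\sigma_{\varphi^{n+k}(x)} = \sigma_{\varphi^k(x)}$ can be iterated (it plainly can, since both sides depend only on the chosen representative $x \in X$, and the previous lemma's conclusion is uniform in $x$). The argument is essentially the same reasoning used to show that, in a cyclic group, the order of an element divides any exponent that sends it to the identity.
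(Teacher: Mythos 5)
Your proof is correct and follows essentially the same route as the paper: both establish that $|X|=o(\varphi)$ satisfies the defining property of $n=|\sigma(X)|$ from Lemma 5.2.1 and then invoke the minimality of $n$ to force divisibility. The only cosmetic difference is that you run the order-divides-exponent step via Euclidean division ($|X|=qn+r$, so $r=0$), whereas the paper phrases it as a contradiction using $\gcd(n,o(\varphi))$ and B\'ezout.
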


\begin{proof}
If $G(X)$ is cyclic and acts transitively on $X$, then $|G(X)| = |X|$. If $n$ is such that $\sigma_{\varphi^n(x)} = \sigma_x$, then $\sigma_x = \sigma_{\varphi^n(x)} = \sigma_{\varphi^{2n}(x)} = \cdots = \sigma_{\varphi^{nr}(x)} = \cdots$ for $r \in \Z$.

Suppose $n$ is minimal for satisfying the condition, and does not divide the order of $\varphi$ (which we denote $o(\varphi)$), so $m := \gcd(n,o(\varphi)) < n$. There exists $r,s \in \Z$ such that $nr + o(\varphi)s = m$, and so $\varphi^m = \varphi^{nr + o(\varphi)s} = (\varphi^{o(\varphi)})^s \varphi^{nr} = \varphi^{nr}$. Thus if $n$ satisfies $\sigma_{\varphi^n(x)} = \sigma_x$, then $m < n$ satisfies $\sigma_{\varphi^m(x)} = \sigma_x$, contradicting minimality of $n$. Thus the minimal $n = |\sigma(X)|$ divides $o(\varphi) = |\langle \varphi \rangle| = |G(X)| = |X|$.
\end{proof}

\noindent
\emph{Remark:} It should be noted that $|\sigma(X)|$ divides $|X|$ for any finite, indecomposable cycle set $X$, by \cite[Lem. 1]{ccp}. However, the proof above is our own.

\begin{lemma}[{\cite[Lem. 4]{cpr}}]
There exists $x \in X$ such that $\langle \sigma_x \rangle = G(X)$.
\end{lemma}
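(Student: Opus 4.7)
The plan is to use two structural facts: first, that the hypotheses force $|G(X)| = |X| = p^k$, and second, that a cyclic group of prime-power order has a unique maximal subgroup, so "some $\sigma_x$ generates $G(X)$" is equivalent to "not every $\sigma_x$ lies in that proper subgroup."

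First I would establish $|G(X)| = |X| = p^k$. Since $X$ is indecomposable, Proposition 5.1.5 gives that $G(X)$ acts transitively on $X$. Because $G(X) = \langle\varphi\rangle$ is cyclic and hence abelian, any transitive action is regular: if $g \in G(X)$ fixes some $x_0$, then for every $y = h(x_0)$ (some $h \in G(X)$) we have $g(y) = gh(x_0) = hg(x_0) = h(x_0) = y$, so $g$ acts as the identity, and since $G(X) \leq \mathrm{Sym}(X)$ acts faithfully, $g = 1$. Transitivity and triviality of stabilisers then force $|G(X)| = |X| = p^k$.

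Next I would exploit the cyclic prime-power structure. In $G(X) = \langle\varphi\rangle \cong \Z/p^k\Z$, every proper subgroup is contained in the unique maximal subgroup $H := \langle\varphi^p\rangle$, of index $p$. Consequently, an element $g \in G(X)$ generates $G(X)$ if and only if $g \notin H$.

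Now, by definition $G(X)$ is generated by the set $\{\sigma_x : x \in X\}$. If every $\sigma_x$ lay in the proper subgroup $H$, we would have $G(X) \subseteq H \subsetneq G(X)$, a contradiction. Hence there exists some $x \in X$ with $\sigma_x \notin H$, and by the previous paragraph this $\sigma_x$ is a generator, giving $\langle \sigma_x \rangle = G(X)$ as required.

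There is no real obstacle in this argument; the only subtlety is making sure that faithfulness of $G(X) \leq \mathrm{Sym}(X)$ together with cyclicity is really what lets us conclude $|G(X)| = |X|$, since without the prime-power hypothesis the conclusion could fail (we would only know that some $\sigma_x$ lies outside \emph{every single} maximal subgroup, which is a stronger condition in general). The prime-power assumption is what collapses all maximal subgroups into the single subgroup $\langle \varphi^p \rangle$ and makes the argument work.
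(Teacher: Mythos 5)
Your proposal is correct and follows essentially the same route as the paper: both arguments note that if every $\sigma_x$ lay in $\langle \varphi^p \rangle$ then $G(X)$, being generated by the $\sigma_x$, would be a proper subgroup of itself, so some $\sigma_x$ lies outside the unique maximal subgroup and hence generates the cyclic $p$-group. Your explicit justification that a faithful transitive abelian action is regular (giving $|G(X)| = |X| = p^k$) is a detail the paper's proof asserts without argument, so that addition is welcome but does not change the approach.
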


\begin{proof}
Since $G(X)$ acts transitively on $X$ we have that $|G(X)| = |X| = p^k$. Moreover, for every $x \in X$ there exists a least $n_x \in \N$ such that $\sigma_x = \varphi^{n_x}$. 

If we suppose that $p\, |\, n_x$ for every $x \in X$ then $G(X)$ is contained in $\langle \varphi^p\rangle$ and hence $|G(X)|<p^k$. Thus there is some $x$ with $ p \not| \, n_x$. We have $\sigma_x^i = \varphi^{n_x i} = \id$ if and only if $p^k \, |\, n_x i$, so when $ p \not| \ n_x$, $\sigma_x^i = \id$ if and only if $p^k \, |\, i$. Thus $\langle \sigma_x \rangle = G(X)$.
\end{proof}

Since we are only considering indecomposable cycle sets of order $p^k$ with cyclic permutation groups, we can use lemma 5.2.2 to make our lives easier. From now on, for $x \in X$ such that $\langle \sigma_x \rangle = G(X)$, we set $\varphi := \sigma_x$ and $0 := x$. Then for $i \in \{1, \cdots , p^k - 1\}$, set $i := \varphi^i(0)$. Thus we have $X = \{0, \cdots, p^k - 1\}$, and $G(X) = \langle \varphi \rangle$ where $\varphi$ is just the cycle $(0 \, \cdots \, p^k - 1)$.

\begin{lemma}[{\cite[Lem. 5]{cpr}}]
Let $j_i \in \N$ be such that $\sigma_i = \varphi^{j_i}$ for each $i = 0,\cdots,p^k-1$. Then $j_i \equiv j_{i+|\sigma^s(X)|} \mod  |\sigma^{s-1}(X)|$ for every $i \in \{0,\cdots,p^k-1\}$ and $s\in\N$.
\end{lemma}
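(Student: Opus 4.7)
The plan is to apply Lemma 5.2.1 not to $X$ itself but to the $(s-1)$-th iterated retraction $\sigma^{s-1}(X)$, and then pull the resulting identity back through the retraction map to $X$.

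First I would verify that $\sigma^{s-1}(X)$ satisfies the hypotheses under which Lemma 5.2.1 was stated: it is a finite, non-degenerate, indecomposable cycle set with cyclic permutation group. Finiteness is obvious, non-degeneracy of each $\sigma^{m}(X)$ follows by iterating Proposition 3.3.11 starting from the fact (Theorem 3.3.14) that $X$ is non-degenerate, and indecomposability is preserved at each step because the retraction is a surjective morphism of cycle sets inducing a surjective group homomorphism on permutation groups, so transitivity of the action passes to the quotient. Cyclicity is then immediate, since $G(\sigma^{s-1}(X))$ is a quotient of $G(X) = \langle \varphi \rangle$. Writing $\pi^{s-1}: X \to \sigma^{s-1}(X)$ for the composite of the retraction maps and $\pi^{s-1}_{*}$ for the induced surjective group homomorphism, the element $\tilde\varphi := \pi^{s-1}_{*}(\varphi)$ is then a distinguished generator of $G(\sigma^{s-1}(X))$ and therefore has order exactly $n_{s-1}$.

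Next, applying Lemma 5.2.1 to $\sigma^{s-1}(X)$ gives that $n_s = |\sigma(\sigma^{s-1}(X))|$ is the least positive integer with $\sigma'_{\tilde\varphi^{n_s}(\bar x)} = \sigma'_{\bar x}$ for every $\bar x \in \sigma^{s-1}(X)$, where $\sigma'$ denotes the $\sigma$-map on $\sigma^{s-1}(X)$. I would then specialise to $\bar x = \pi^{s-1}(i)$. Equivariance of $\pi^{s-1}$ gives $\tilde\varphi^{n_s}(\pi^{s-1}(i)) = \pi^{s-1}(\varphi^{n_s}(i)) = \pi^{s-1}(i+n_s)$, while the definition of the induced action gives $\sigma'_{\pi^{s-1}(i)} = \pi^{s-1}_{*}(\sigma_i) = \pi^{s-1}_{*}(\varphi^{j_i}) = \tilde\varphi^{j_i}$ and similarly $\sigma'_{\pi^{s-1}(i+n_s)} = \tilde\varphi^{j_{i+n_s}}$. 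The conclusion of Lemma 5.2.1 therefore collapses to $\tilde\varphi^{j_i} = \tilde\varphi^{j_{i+n_s}}$, and since $\tilde\varphi$ has order $n_{s-1}$ this is precisely $j_i \equiv j_{i+n_s} \pmod{n_{s-1}}$, as required.

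The main obstacle is the bookkeeping surrounding the iterated retraction: one has to check cleanly that $\pi^{s-1}$ is $G(X)$-equivariant, that $\pi^{s-1}_{*}$ is a well-defined surjective homomorphism onto the cyclic group $G(\sigma^{s-1}(X)) = \langle \tilde\varphi \rangle$, and that $\sigma'_{\pi^{s-1}(i)}$ really coincides with $\pi^{s-1}_{*}(\sigma_i)$. These are all formal consequences of the fact that retraction is a morphism of cycle sets (Lemma 3.3.9 and the surrounding discussion), but making them explicit is where the work actually lies; once this setup is in hand, the proof reduces to a single application of Lemma 5.2.1.
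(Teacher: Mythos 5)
Your argument is correct, but it is packaged differently from the paper's. The paper handles $s=1$ directly from Lemma 5.2.1 and then proves the general case by induction on the multipermutation level: it applies the inductive hypothesis to $\sigma(X)$ (whose permutation group is generated by the cycle $(\sigma_0\,\cdots\,\sigma_{|\sigma(X)|-1})$), obtains congruences for the exponents $j_i'$ of the retraction, and transfers them back to the $j_i$ via $j_i' \equiv j_i \bmod |\sigma(X)|$ together with the divisibility $|\sigma^r(X)| \mid |\sigma(X)|$ from Corollary 5.2.2. You instead jump straight to $\sigma^{s-1}(X)$, apply Lemma 5.2.1 there once, and pull the identity back along the composite retraction morphism $\pi^{s-1}$ and the induced surjective homomorphism $\pi^{s-1}_{*}$ of permutation groups. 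The underlying idea is the same --- both proofs ultimately invoke Lemma 5.2.1 at the level of $\sigma^{s-1}(X)$ --- but your route trades the paper's induction and congruence bookkeeping (and its reliance on Corollary 5.2.2) for a one-shot argument that requires establishing the functoriality of retraction: that $\pi^{s-1}$ is equivariant, that $\pi^{s-1}_{*}$ is a well-defined surjective homomorphism (which holds because $\sigma^{Y}_{f(x)} \circ f = f \circ \sigma^{X}_{x}$ for a surjective morphism $f$, so a word that is trivial upstairs is trivial downstairs), and that $\tilde\varphi$ has order exactly $|\sigma^{s-1}(X)|$ (from cyclicity plus transitivity, hence regularity). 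You correctly flag these as the real work; all of them do hold, and in exchange you need only the existence direction of Lemma 5.2.1, not its minimality claim. Either approach is sound; yours is arguably cleaner once the retraction formalism is set up, while the paper's stays closer to explicit exponent arithmetic.
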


\begin{proof}
First we show the $s = 1$ case. By lemma 5.2.1, $\sigma_i = \sigma_{\varphi^{|\sigma(X)|}(i)} = \sigma_{i + |\sigma(X)|}$ and thus $j_i \cong j_{i + |\sigma(X)|} \mod |X|$, since $\sigma_i = \varphi^{j_i} = \varphi^{j_i + n|X|} = \varphi^{j_{i+\sigma(X)}} = \sigma_{i+\sigma(X)}$ for some $n$.

Since cyclic groups are abelian, proposition 5.1.6 tells us that $X$ is multipermutational, so we prove the $s>1$ case by induction on the multipermutation level of $X$. In the case where $X$ is multipermutation level 1 this is trivial, since $\sigma^{s}(X) = \sigma^{s-1}(X) = 1$.

Suppose the result holds for cycle sets of multipermutation level $\leq N$, and let $X$ be of multipermutation level $n = N+1$. Let $\varphi'$ be the cycle $(\sigma_0\, \cdots \, \sigma_{|\sigma(X)|-1})$, which by lemma 5.2.1 is well defined and has order $|\sigma(X)|$. It follows from the proof of proposition 5.1.6 that $|G(\sigma(X))| = |\sigma(X)|$, so $G(\sigma(X)) = \langle \varphi' \rangle$, and if $j_i'$ is such that $\sigma_{\sigma_{x_i}} = \phi'^{j'_i}$, then $j'_i \equiv j_i \mod |\sigma(X)|$.

Now $\sigma(X)$ is an indecomposable cycle set of prime power order (since $|\sigma(X)|$ divides $|X| = p^k$) and multipermutation level $n - 1 = N$, so by the inductive hypothesis we have that $j'_i \equiv j'_{i+|\sigma^{r+1}(X)|} \mod |\sigma^r(X)|$ for every $r \in \N$. Also, $j'_i \equiv j_i \mod |\sigma^r(X)|$ and $j'_{i+|\sigma^{r+1}(X)|} \equiv j_{i+|\sigma^{r+1}(X)|} \mod |\sigma^r(X)|$ since these hold mod $|\sigma(X)|$, and $|\sigma^r(X)|$ divides $|\sigma(X)|$ by corollary 5.2.2. Therefore it holds that $j_i \equiv j_{i+|\sigma^{r+1}(X)|} \mod |\sigma^r(X)|$ for every $i \in \{0,\cdots,p^k-1\}$ and $r \in \N$, which is exactly the $s > 1$ case where $s = r+1$.
\end{proof}

We are now ready to give the main result of the section: a method of taking a prime $p$, and natural numbers $n,k \in \N$, and constructing an indecomposable cycle set $X$ with $|X| = p^k$ and of multipermutation level $n$. Since when $X$ is of multipermutation level 1, $\sigma_x = \sigma_y$ for all $x,y$, this case is considered trivial and we will only consider $n > 1$.

\begin{theorem}[{\cite[Thm. 8]{cpr}}]
Let $X = \{0, \cdots, p^k - 1\}$ for some prime $p$ and $k \in \N$, $n \in \N \setminus \{1\}$ and $k = j_0 > \cdots > j_n = 0 \in \N \cup \{0\}$, and let $\{f_i\}_{i \in \{1, \cdots, n-1\}}$ be a family of functions:
\begin{equation}
f_i : \Z/{p^{j_i}}\Z \rightarrow \{0, \cdots, p^{j_{i-1} - j_i} - 1\}
\end{equation}
such that $f_i(0) = 0$ for every $i \in \{1, \cdots, n-1\}$ and the functions
\begin{align}
\psi_i &: \Z/{p^{j_i}}\Z \rightarrow \Z/{p^{j_{i-1}}}\Z \notag \\ 
&\ ;\ x \mapsto 1 + p^{j_{n-1}}f_{n-1}(x)+ \cdots + p^{j_i}f_i(x) 
\end{align}
are injective for each $i \in \{1,\cdots,n-1\}$ (when $\psi_i$ or $f_i$ are given an integer as an input, it is treated as its residue mod $p^{j_i}$). Finally, set $\varphi := (0\, \cdots \,p^k-1) \in Sym(X)$ and $\sigma_i := \varphi^{\psi_1(i)}$ for each $i \in X$. If 
\begin{equation}
x + 2\psi_1(y) \equiv y + 2\psi_1(x) \mod p^k,
\end{equation}
then $(X, \cdot)$ is an indecomposable cycle set of multipermutation level $n$, where $x \cdot y = \sigma_x(y)$. It has cyclic permutation group $G(X) = \langle \varphi \rangle$ and $|\sigma^i(X)| = p^{j_i}$ for each $i \in \{0,\cdots, n\}$.
\end{theorem}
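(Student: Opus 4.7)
The plan is to verify the four claimed properties in turn: that $(X,\cdot)$ is a cycle set, that $G(X) = \langle \varphi \rangle$ with $X$ indecomposable, that $|\sigma^s(X)| = p^{j_s}$ for each $s$, and finally that the multipermutation level is exactly $n$.

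First, since $\sigma_x = \varphi^{\psi_1(x)}$ is automatically a bijection, the substantive cycle set axiom to verify is $(x\cdot y)\cdot(x\cdot z) = (y\cdot x)\cdot(y\cdot z)$. Substituting $\sigma_x(y) = y + \psi_1(x) \pmod{p^k}$, both sides collapse to $z$ plus $\psi_1(x) + \psi_1(y + \psi_1(x))$ and $\psi_1(y) + \psi_1(x + \psi_1(y))$ respectively, so the axiom amounts to the congruence
\[
\psi_1(x) + \psi_1(y + \psi_1(x)) \equiv \psi_1(y) + \psi_1(x + \psi_1(y)) \pmod{p^k}.
\]
My plan is to deduce this by applying hypothesis (5.6) to the pairs $(x, y+\psi_1(x))$ and $(y, x+\psi_1(y))$, and then using (5.6) on $(x,y)$ itself to eliminate the residual $x-y$ that appears. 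This step is delicate because $\psi_1$ is not linear in any obvious sense, and I expect it to be the principal obstacle of the proof.

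Second, the normalisation $f_i(0)=0$ forces $\psi_1(0) = 1$, whence $\sigma_0 = \varphi$; since every $\sigma_i$ is a power of $\varphi$, we get $G(X) = \langle \varphi \rangle$. Because $\varphi$ is a single $p^k$-cycle, $G(X)$ acts transitively on $X$, and Proposition 5.1.5 gives indecomposability. To evaluate the first retraction, note that $\sigma_x = \sigma_y$ iff $\psi_1(x) \equiv \psi_1(y) \pmod{p^k}$; since $\psi_1$ depends only on $x \bmod p^{j_1}$ and is injective on $\Z/p^{j_1}\Z$ by hypothesis, this happens iff $x \equiv y \pmod{p^{j_1}}$, giving $|\sigma(X)| = p^{j_1}$.

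Finally, I would handle the higher retractions and the multipermutation level by induction on $n$. The key observation is that the summand $p^{j_1} f_1(x)$ of $\psi_1(x)$ vanishes modulo $p^{j_1}$, so $\psi_1 \equiv \psi_2 \pmod{p^{j_1}}$. Identifying $\sigma(X)$ with $\Z/p^{j_1}\Z$ via $\sigma_x \mapsto x \bmod p^{j_1}$, the induced cycle set operation becomes $a\cdot b = b + \psi_2(a) \pmod{p^{j_1}}$; this is precisely the construction of the theorem with parameters $(p,k,n)$ replaced by $(p,j_1,n-1)$, descending sequence $j_1 > j_2 > \cdots > j_n = 0$, and functions $f_2, \ldots, f_{n-1}$. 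The analogue of (5.6) for $\psi_2$ follows by reducing (5.6) modulo $p^{j_1}$, and the injectivity hypotheses on $\psi_2, \ldots, \psi_{n-1}$ are inherited unchanged. The inductive hypothesis then yields $|\sigma^s(X)| = p^{j_s}$ for every $s$; in particular $|\sigma^n(X)| = p^0 = 1$ whereas $|\sigma^{n-1}(X)| = p^{j_{n-1}} > 1$, so the multipermutation level is exactly $n$.
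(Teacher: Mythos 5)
Your reduction of the cycle-set axiom to
\[
\psi_1(x) + \psi_1\bigl(y + \psi_1(x)\bigr) \equiv \psi_1(y) + \psi_1\bigl(x + \psi_1(y)\bigr) \pmod{p^k}
\]
is the correct one (it is just $\sigma_{x\cdot y}\sigma_x = \sigma_{y\cdot x}\sigma_y$ written in exponents, using $\sigma_{x\cdot y}=\varphi^{\psi_1(x\cdot y)}$ and $\varphi^m(t)=t+m$), but the bridge you propose from the hypothesis $x + 2\psi_1(y) \equiv y + 2\psi_1(x)$ to this congruence does not exist, and you have flagged the crux without closing it. Concretely: setting one variable to $0$ in that hypothesis and using $f_i(0)=0$ gives $2\psi_1(t) \equiv t+2 \pmod{p^k}$ for every $t$. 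Doubling the difference of the two sides of your congruence and substituting this identity four times yields
\[
2\Bigl(\psi_1(x)+\psi_1\bigl(y+\psi_1(x)\bigr)\Bigr) - 2\Bigl(\psi_1(y)+\psi_1\bigl(x+\psi_1(y)\bigr)\Bigr) \equiv \psi_1(x)-\psi_1(y) \pmod{p^k},
\]
which is nonzero whenever $x \not\equiv y \pmod{p^{j_1}}$, by injectivity of $\psi_1$. So the stated hypothesis does not imply the cycle-set axiom, and no rearrangement of its instances will; this step genuinely fails.

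The failure is instructive when set against the paper's own proof: there, $\sigma_{\sigma_x(y)}$ is computed as $\varphi^{\varphi^{\psi_1(x)}(y)} = \varphi^{y+\psi_1(x)}$ rather than $\varphi^{\psi_1(\varphi^{\psi_1(x)}(y))}$, i.e.\ an application of $\psi_1$ is dropped, which is precisely how the exponent becomes linear and lands on the displayed hypothesis. Your more faithful computation exposes that this hypothesis cannot be the right one: the identity $2\psi_1(t)\equiv t+2$ forces $t \equiv 2\sum_i p^{j_i} f_i(t) \pmod{p^k}$, whose right-hand side is divisible by $p$ since every $j_i \geq 1$, so the condition is unsatisfiable for $t=1$ and the theorem is vacuous as stated (this is why Example 5.2.6 finds nothing, even though an indecomposable cycle set of order $4$ and level $2$ does exist, e.g.\ $f_1(0)=0$, $f_1(1)=1$, which satisfies your congruence but not the paper's). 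The condition that should be imposed, and that you should verify directly, is exactly your displayed congruence. The rest of your proposal --- $\sigma_0=\varphi$ giving $G(X)=\langle\varphi\rangle$, transitivity giving indecomposability, injectivity of $\psi_1$ giving $|\sigma(X)|=p^{j_1}$, and the induction identifying $\sigma(X)$ with the same construction for the data $j_1>\cdots>j_n$ and $f_2,\dots,f_{n-1}$ --- mirrors the paper's argument and is sound once the first step is repaired in this way.
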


\begin{proof}
We have $\sigma_x \in Sym(X)$, so the left multiplications are bijective. If $x \cdot y = \sigma_x(y)$, then $(x \cdot y) \cdot (x \cdot z) = \sigma_{x \cdot y} \sigma_x (z) = \sigma_{\sigma_x(y)} \sigma_x(z)$, and we have
\[ \sigma_{\sigma_x(y)}\sigma_{x} = \varphi^{\varphi^{\psi_1(x)}(y)} \varphi^{\psi_1(x)} = \varphi^{y + 2\psi_1(x)}, \]
with the second equality holding since $\varphi^{\psi_1(x)}(y) \equiv \psi_1(x) + y \mod p^k$. By assumption, $y + 2\varphi_1(x) \equiv x + 2 \varphi_1(y)$, and hence $X$ is a cycle set. By definition of the left multiplications it follows that $G(X) = \langle \varphi \rangle$, and thus $X$ is indecomposable since $G(X)$ acts transitively.

Now we show that $|\sigma^i(X)| = p^{j_i}$ for every $i \in \{0, \cdots, n\}$ by induction on $i$. For $i = 0$ we already know that $|\sigma^{0}(X)| = |X| = p^k = p^{j_0}$. For $i = 1$, we have $\sigma_x = \sigma_y$ whenever:
\begin{align*}
\varphi^{\psi_1(x)} = \varphi^{\psi_1(y)} & \iff \psi_1(x) \equiv \psi_1(y) \mod p^k\\
&\iff x \equiv y \mod p^{j_1},
\end{align*}
since $\psi_1$ is injective and defined on $\Z_{p^{j_1}}$. Hence $|\sigma(X)| = p^{j_1}$. 

Next, we suppose (as asserted in \cite{cpr}) that $\sigma^i(X)$ is isomorphic to the cycle set $\bar{X}_i = \{0,\cdots, p^{j_i} - 1\}$ with multiplication defined by:
\[ \sigma_x = \begin{cases} \bar{\varphi}_i^{\psi_{i+1}(x)} & i < n-1,\\ \bar{\varphi}_i & i=n-1. \end{cases} \]
for $x \in \bar{X}$, where $\bar{\varphi}_i = (0\, \cdots \, p^{j_i} - 1)$.

For $i < n-1$, $\bar{X}_i$ has been constructed as above, with $\bar{j}^i_1 = j_{i+1}$, so we can apply our proof of the $i=1$ case to see that $|\sigma^{i+1}(X)| = |\sigma(\bar{X}_i)| = p^{\bar{j}^i_1} = p^{j_{i+1}}$, giving the result for $2 \leq i \leq n-1$. For $i = n-1$, all the left multiplications of $\sigma^i(X)$ are equal to $\varphi$, and hence $|\sigma^{i+1}(X)| = |\sigma^{n}(X)| = 1 = p^{j_n}$, the result for $i = n$.

It remains to prove the assertion that $\sigma^i(X)$ is isomorphic to $\bar{X}_i$. (This is asserted without proof in \cite{cpr}, and so our proof is novel.) It suffices to show that $\sigma(X)$ is isomorphic to $\bar{X}_1$, since this can then be applied to $X= \bar{X}_1$ to give us $\sigma^2(X) = \sigma(\sigma(X)) \cong \sigma(\bar{X}_1) \cong \bar{X}_2$, and so on to obtain $\sigma^i(X) \cong \bar{X}_i$.

We saw above that $\sigma_x = \sigma_y$ whenever $x \equiv y \mod p^{j_1}$, so we map $\sigma_x \mapsto \bar{x}_1$, which denotes the unique element of $\bar{X}_1$ satisfying $\bar{x}_1 \equiv x \mod p^{j_1}$. This is clearly a bijection, so we need to check that $\sigma_x \cdot \sigma_y = \sigma_{x \cdot y} \mapsto \bar{x}_1 \cdot \bar{y}_1$, or equivalently that $x \cdot y \equiv \bar{x}_1 \cdot \bar{y}_1 \mod p^{j_1}$.

First assume $n > 2$ so that $\psi_2$ is well defined. From 5.3 we have
\begin{align*}
\psi_1(x) &= 1 + p^{j_{n-1}}f_{n-1}(x)+ \cdots + p^{j_2}f_2(x) + p^{j_1}f_1(x),\\
\psi_2(x) &= 1 + p^{j_{n-1}}f_{n-1}(x)+ \cdots + p^{j_2}f_2(x),
\end{align*}
meaning that $\psi_1(x) \equiv \psi_2(x) \mod p^{j_1}$, so in particular $\bar{\varphi}_1^{\psi_1(x)} = \bar{\varphi}_1^{\psi_2(x)}$. We also have $\bar{\varphi}_1(x) \equiv \varphi(x) \mod p^{j_1}$, since $p^{j_1}$ divides $p^k$. Thus (where the $\equiv$ is mod $p^{j_1}$):
\[ x \cdot y = \varphi^{\psi_1(x)}(y) \equiv \bar{\varphi}_1^{\psi_1(\bar{x}_1)}(\bar{y}_1)  = \bar{\varphi}_1^{\psi_2(\bar{x}_1)}(\bar{y}_1) = \bar{x}_1 \cdot \bar{y}_1, \]
as required. For the $n=2$ case, we have $\psi_1(x) = 1 + p^{j_1}f_1(x) \equiv 1 \mod p^{j_1}$, and so $\bar{\varphi}_1^{\psi_1(x)} = \bar{\varphi}_1$. Thus:
\[ x \cdot y = \varphi^{\psi_1(x)}(y) \equiv \bar{\varphi}_1^{\psi_1(\bar{x}_1)}(\bar{y}_1)  = \bar{\varphi}_1(\bar{y}_1) = \bar{x}_1 \cdot \bar{y}_1, \]
since $n-1 = 1$, so $\sigma_{x} = \bar{\varphi}_1$ for all $x \in \bar{X}_1$.
\end{proof}

We should note that in the proof given in \cite{cpr}, the authors define $K_{i,j}$ and $Q_{i,j}$ for $(i,j) \in X^2$, in addition to the $f_i$, $\psi_i$, $\varphi$ and $\sigma_x$. We simplified the conditions on the $Q_{i,j}$s, which are defined in terms of the $K_{i,j}$s, into the condition 5.4, making the proof much more efficient.

In \cite[Thm. 9, Cor. 10]{cpr}, they go on to prove that \emph{every} indecomposable cycle set of prime power order (and multipermutation level $\geq 2$ can be obtained in this fashion (up to isomorphism, of course). Here is an example of this result giving non-existence:

\begin{example}
Let $p = k = 2$, so $X = \{0,1,2,3\}$, and let  $n = 2$. Since $j_0 = 2$ and $j_2 = 0$ are fixed, we must have $j_1 = 1$. We only need to give $f_1:\Z/2\Z \rightarrow \{0,1\}$, and we need $\psi_1(x) = 1 + 2f(x)$ to be injective. Thus $f_1$ needs to be injective, so our only choices are $f_1(0) = 0, f_1(1) = 1$ and $f_1'(0) = 1, f_1'(1) = 0$.

For $f_1$, we have that $\psi_1(x) = 1 + 2f_1(x)$ is indeed injective:
\[ \psi_1(0) = 1, \quad \quad \psi_1(1) = 3. \]
However, we also need to check that $x + 2 \psi_1(y) \equiv y + 2 \psi_1(x) \mod 4$, and:
\begin{align*}
0 + 2 \psi_1(1) &= 2\cdot 3 = 6,\\
1 + 2 \psi_1(0) &= 1 + 2 \cdot 1 = 3,
\end{align*}
so this does not work.

For $f_1'$, we again have that $\psi_1(x) = 1 + 2f_1'(x)$ is injective:
\[ \psi_1(0) = 3, \quad \quad \psi_1(1) = 1. \]
Again, however, we also need to check that $x + 2 \psi_1(y) \equiv y + 2 \psi_1(x) \mod 4$, and:
\begin{align*}
0 + 2 \psi_1(1) &= 2\cdot 1 = 2,\\
1 + 2 \psi_1(0) &= 1 + 2 \cdot 3 = 7,
\end{align*}
so this does not work either. Thus there is no indecomposible cycle set of cardinality 4 and multipermutation level 2.
\end{example}

\addcontentsline{toc}{chapter}{Bibliography}
\bibliographystyle{abbrv}
\bibliography{project.bib}

\end{document}